\numberwithin{equation}{section}
\newtheorem*{rep@theorem}{\rep@title}
\newcommand{\newreptheorem}[2]{%
\newenvironment{rep#1}[1]{%
 \def\rep@title{#2 \ref{##1}}%
 \begin{rep@theorem}}%
 {\end{rep@theorem}}}
\newtheorem{theorem}{Theorem}[section]
\newtheorem{lemma}[theorem]{Lemma}
\newtheorem{corollary}[theorem]{Corollary}
\theoremstyle{definition}
\theoremstyle{remark}
\newcounter{thmenumerate}
\newcounter{xenumerate}
\newcommand\bbR{\mathbb R}
\newcommand\bbN{\mathbb N}
\newcommand\bbZ{\mathbb Z}
\newcommand\E{\operatorname{\mathbb E{}}}
\renewcommand\P{\operatorname{\mathbb P{}}}
\newcommand\eps{\varepsilon}
\renewcommand\phi{\varphi}
\newcommand\cF{\mathcal F}
\newcommand\N{{\mathbb N}}
\newcommand\Z{{\mathbb Z}}
\newcommand\R{{\mathbb R}}
\newcommand\1{{\mathbb I}}
\begin{document}
\title
{Vertices of high degree in the preferential attachment tree}


\author{Graham Brightwell}
\address{Department of Mathematics, London School of Economics,
Houghton Street, London WC2A 2AE, United Kingdom}
\email{g.r.brightwell@lse.ac.uk}
\urladdr{http://www.maths.lse.ac.uk/Personal/graham/}

\author{Malwina Luczak}
\thanks{Malwina Luczak's research is partially supported by an EPSRC Leadership Fellowship}
\address{School of Mathematics and Statistics, University of Sheffield,
  Hicks Building, Hounsfield Rd, Sheffield S3 7RH, United Kingdom}
\email{m.luczak@sheffield.ac.uk}

\keywords{random graphs, web graphs, concentration of measure, martingales, preferential attachment}
\subjclass[2000]{05C80,60J10,60G42}

\begin{abstract}
We study the basic preferential attachment process, which generates a sequence of random trees, each obtained from the
previous one by introducing a new vertex and joining it to one existing vertex, chosen with probability proportional to its degree.
We investigate the number $D_t(\ell)$ of vertices of each degree $\ell$ at each time $t$, focussing particularly on
the case where $\ell$ is a growing function of $t$.  We show that $D_t(\ell)$ is concentrated around its mean, which is
approximately $4t/\ell^3$, for all $\ell \le (t/\log t)^{-1/3}$; this is best possible up to a logarithmic factor.
\end{abstract}

\maketitle

\section{Introduction}\label{Sintro}

In this paper, we study the basic {\em preferential attachment} process, which is defined as follows.
We start with a (small) tree on $\tau_0 \ge 1$ vertices.  At each integer time $t > \tau_0$, a new vertex arrives,
and is joined to one existing vertex; a vertex is chosen as the other endpoint of the new edge with probability proportional
to its current degree.  Thus, at each time $t$, we have a tree with $t$ vertices.  The random tree obtained at any time
$t$ is called the {\em preferential attachment tree}.

\smallskip

The first appearance of this process can be traced back at least to Yule~\cite{yule} in 1925, and in probability theory
the model is sometimes referred to as a {\em Yule process}.  Subsequently, Szyma\'nski~\cite{Szy1} studied the
preferential attachment process in the guise of
{\em plane-oriented recursive trees}.  He gave a formula for the expected number $d_t(\ell)$ of vertices of
degree $\ell$ at time $t$, namely
$$
d_t(\ell) = \frac{4t}{\ell(\ell+1)(\ell+2)} + O(1).
$$
The structure of such trees was further analysed by Mahmood, Smythe and
Szyma\'nski~\cite{MSS}, and by Mahmood and Smythe~\cite{MS}.  Lu and Feng~\cite{LF} proved a concentration result for
the random number $D_t(\ell)$ of vertices of degree $\ell$, for fixed $\ell$.

Interest in the model surged after a paper of Barab\'asi and Albert~\cite{BA} in 1999, who proposed preferential
attachment as a model of the growth of ``web graphs'', i.e., graphs possessing many of the same properties as
``real-world networks'' such as the worldwide web.
Barab\'asi and Albert studied not just the preferential attachment process as defined above, but also the variant where each
new vertex chooses some fixed number $m \ge 1$ of neighbours.  For $m > 1$, the {\em preferential attachment graphs} produced
are of course not trees, but for properties such as the degree sequence, the overall pattern of behaviour is the same for any fixed $m$.

Preferential attachment graphs were studied formally by Bollob\'as, Riordan, Spencer and Tusn\'ady~\cite{brst01},
who proved that the degree sequence follows a power law with exponent~3, i.e., the expected number $d^m_t(\ell)$ of
vertices of degree $\ell$ at time $t$ is of order $t/\ell^3$, more precisely
$$
d^m_t(\ell) \simeq \frac{2m(m+1)}{(\ell+m)(\ell+m+1)(\ell+m+2)} \, t
$$
for all $\ell \le t^{1/15}$.  They also showed that the random number $D^m_t(\ell)$ of vertices
of degree $\ell$ at time $t$ is concentrated within $O(\sqrt {t \log t})$ of its expectation $d^m_t(\ell)$.
They further indicated how the results could be extended to somewhat larger values of $\ell$.

Szyma\'nski~\cite{Szy2} gave a more precise estimate for $d^1_t(\ell) = d_t(\ell)$.  Combining this with the concentration
result of Bollob\'as, Riordan, Spencer and Tusn\'ady~\cite{brst01} shows that $D_t(\ell)$ is concentrated within
a factor $(1+o(1))$ of its mean for $\ell$ up to nearly $t^{1/6}$.
Szyma\'nski~\cite{Szy2} also gave a precise estimate for the expected number $u_t(\ell)$ of vertices of degree {\em at least}
$\ell$, namely $\displaystyle u_t(\ell) = \frac{2t}{\ell(\ell+1)} + O(1)$.
Janson~\cite{janson}, extending a result of Mahmoud, Smythe and Szyma\'nski~\cite{MSS}, proved a central limit theorem for
$D_t(\ell)$ as $t \to \infty$, jointly for all $\ell \ge 1$.

A much more general model was introduced and studied by
Cooper and Frieze~\cite{cf03}, and Cooper~\cite{c05}: in the latter paper, Cooper proved a general result that implies
(weak) concentration for $D_t(\ell)$ whenever $\ell \le t^{1/6}/\log^2 t$.

\smallskip

The maximum degree $\Delta_t$ of the preferential attachment tree is known to behave as $t^{1/2}$ as $t \to \infty$.
M\'ori~\cite{mori} proved a law of large numbers and a central limit theorem for the $\Delta_t$: in particular, he showed that
$\Delta_t t^{-1/2}$ converges almost surely to some positive (non-constant) random variable, as $t \to \infty$.
Further, he showed that the fluctuations of $\Delta_t t^{-1/2}$ around the limit, scaled by $t^{-1/4}$, converge in
distribution to a normal law.

Many variants of the preferential attachment process have been studied.  The limiting proportion of vertices of each
{\em fixed} degree $\ell$ has been investigated in many different models extending and generalising that of preferential
attachment trees.  See for instance, Rudas, Toth and Valko~\cite{RTV}, Athreya, Ghosh and Sethuraman~\cite{AGS},
Deijfen, van den Esker, van der Hofstad and Hooghiemstra~\cite{dehh}, and Dereich and M\"orters~\cite{dm}.  See also
the survey of Bollob\'as and Riordan~\cite{br02} for a number of other results on related models.

\smallskip

Our principal aim in this paper is to prove concentration of measure results for $D_t(\ell)$ for all values of $\ell$ up to
the expected maximum degree.  For values of $\ell$ above $\eps t^{1/3}$, the expectation of $D_t(\ell)$ is of order at most~1,
and all we show is that $D_t(\ell)$ is, with high probability, at most about $\log t$.  For values of $\ell$ at most
$(t/\log t)^{1/3}$, we shall prove
that $D_t(\ell)$ is concentrated within about $\sqrt{t\log t /\ell^3}$ of its mean.

We can write $D_t(\ell) = \sum_{s=1}^t I(s,t,\ell)$, where $I(s,t,\ell)$ is the indicator of the event that the vertex arriving
at time $s$ (or, for $s \le \tau_0$, the initial vertex labelled $s$) has degree exactly $\ell$ at time $t$.  One would expect that,
for large $t$ and $\ell$ in an appropriate range, most of the variables $I(s,t,\ell)$ are approximately independent of each other, each
with mean bounded away from~1.
This would suggest that the variance of $D_t(\ell)$ is of the same order as its mean, and that $D_t(\ell)$ should be concentrated within
about $\sqrt {\E D_t(\ell)} \simeq \sqrt{t/\ell^3}$ of its mean.  This is indeed the case for constant~$\ell$: see Janson~\cite{janson} for
asymptotic formulae for the covariances ${\rm Cov} (D_t(\ell)/t, D_t(j)/t)$.  So our
concentration result is likely to be best possible up to logarithmic factors.

Our methods can also be used to prove similar results for the random variable $U_t(\ell)$, the number of vertices of degree {\em at least}
$\ell$ at time $t$.  The expectation of $U_t(\ell)$ is approximately $2t/\ell^2$ for large $t$: at the end of this paper, we indicate
briefly how to adapt our proof to show that $U_t(\ell)$ is concentrated within about $\sqrt{t \log t/\ell^2}$ of its mean as long as
$\ell \le t^{1/2}/\log^{13/2} t$.

\smallskip

Before stating our results, we specify our model precisely.
We start at some time $\tau_0 \ge 1$, with an initial graph $G(\tau_0)=(V(\tau_0),E(\tau_0))$, with
$|V(\tau_0)| = \tau_0$, $|E(\tau_0)| = \tau_0 - 1$; we think of $G(\tau_0)$ as a tree, although it need not be.  At each step $t > \tau_0$,
a new vertex $v_t$ is created, and is joined to existing vertices by one new edge, whose other endpoint is chosen by
{\em preferential attachment}, that is, a vertex $v$ is chosen as an endpoint with probability proportional
to its degree at time $t-1$.  Note that, if $G(\tau_0)$ is a tree, then the graph at all later stages is also a tree.

Our main theorem concerns the number $D_t(\ell)$ of vertices of degree exactly $\ell$ at time $t$, for all $\ell \ge 1$ and
$t \ge \tau_0$.

\begin{theorem} \label{thm:main}
Let $\tau_0 \ge 4$ and $\psi \ge 10^5 \sqrt{\tau_0-1} \log^3\tau_0$ be constants.
Let $G(\tau_0)$ be any graph with $\tau_0$ vertices and $\tau_0 -1$ edges, and consider the
preferential attachment process with initial graph $G(\tau_0)$ at time $\tau_0$, and the associated Markov chain
$D = (D_t (\ell): t \ge \tau_0, \ell \in \N)$.

With probability at least $1 - \frac{4}{\psi}$, we have
$$
\left| D_t(\ell) - \frac{4t}{\ell(\ell+1)(\ell+2)} \right| \le 120 \sqrt{\frac{t \log (\psi t)}{\ell^3}} + 301 \psi^2 \log(\psi t),
$$
for all $\ell \ge 1$, and all $t \ge \tau_0$.
\end{theorem}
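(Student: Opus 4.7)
The plan is to prove the theorem by induction on $\ell$, using for each $\ell$ a martingale tailored to the linear dynamics of $D_s(\ell)$ in $s \in [\tau_0, t]$. For $\ell \ge 2$, conditional on $\mathcal{F}_{s-1}$ the increment $D_s(\ell) - D_{s-1}(\ell)$ is the difference of the indicators that the new vertex $v_s$ attaches to a vertex of degree $\ell-1$ or $\ell$, so
$$\E[D_s(\ell) \mid \mathcal{F}_{s-1}] - D_{s-1}(\ell) = \frac{(\ell-1)D_{s-1}(\ell-1) - \ell D_{s-1}(\ell)}{2(s-2)},$$
with an additional $+1$ contribution from $v_s$ itself when $\ell = 1$. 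Taking unconditional expectations and solving the resulting deterministic recursion recovers Szyma\'nski's formula $\E D_s(\ell) = d_s(\ell) + O(1)$, where $d_s(\ell) := 4s/(\ell(\ell+1)(\ell+2))$ is the centring appearing in the theorem.

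Writing $Y_s(\ell) := D_s(\ell) - d_s(\ell)$, the identity above yields the linear stochastic recursion
$$Y_s(\ell) = \Bigl(1 - \frac{\ell}{2(s-2)}\Bigr) Y_{s-1}(\ell) + \frac{\ell-1}{2(s-2)} Y_{s-1}(\ell-1) + \xi_s,$$
with $\xi_s := D_s(\ell) - \E[D_s(\ell) \mid \mathcal{F}_{s-1}]$ a martingale-difference sequence. Introducing the integrating factor $\beta_s := \prod_{u=\tau_0+1}^{s}(1 - \ell/(2(u-2)))^{-1}$, which satisfies $\beta_s/\beta_t \asymp (s/t)^{\ell/2}$, the process
$$N_s := \beta_s Y_s(\ell) - \sum_{u=\tau_0+1}^{s} \beta_u \frac{\ell-1}{2(u-2)} Y_{u-1}(\ell-1)$$
is a martingale with increments $\beta_s \xi_s$. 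Because $|\Delta D_s(\ell)| \le 1$, the conditional variance of $\xi_s$ is at most $[\ell D_{s-1}(\ell) + (\ell-1)D_{s-1}(\ell-1)]/(2(s-2))$, which is $O(1/\ell^2)$ on the event that $D_{s-1}(\ell)$ and $D_{s-1}(\ell-1)$ are within a constant factor of their means. Hence the predictable quadratic variation of $N$ is at most a constant times $\beta_t^2 \sum_s (s/t)^\ell /\ell^2 \asymp \beta_t^2 \cdot t/\ell^3$, and Freedman's inequality applied to $N$ yields $|N_t|/\beta_t = O(\sqrt{t \log(\psi t)/\ell^3})$ with the required high probability, giving the martingale contribution to $|Y_t(\ell)|$.

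The induction closes because the drift term $\beta_t^{-1} \sum_u \beta_u \cdot ((\ell-1)/(2(u-2))) |Y_{u-1}(\ell-1)|$, bounded via the inductive hypothesis on $|Y_{u-1}(\ell-1)|$, reduces to an integral of the same form as the one controlling the quadratic variation and also contributes $O(\sqrt{t\log(\psi t)/\ell^3})$; a careful choice of the prefactor $120$ ensures that the martingale and drift contributions at level $\ell$ together respect the inductive bound. The principal technical obstacle is that the $O(1/\ell^2)$ variance estimate requires $D_{s-1}(\ell)$ and $D_{s-1}(\ell-1)$ to be close to their means at every intermediate time $s$, not only at $t$; I would address this by introducing the stopping time $\sigma$ at which the claimed bound first fails for some $\ell' \le \ell$ at some $s' \le t$, running Freedman on the stopped martingale $N_{s \wedge \sigma}$, and using the inductive hypotheses together with a union bound over $(\ell', s')$ to conclude $\P(\sigma \le t) = O(1/\psi)$. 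The base case $\ell = 1$ is a scalar first-order recursion with $\beta_s \asymp \sqrt{s}$, giving the same $\sqrt{t \log(\psi t)}$ bound; for $\ell$ above the threshold $(t/\log t)^{1/3}$ where $d_t(\ell)$ is subconstant, the martingale argument degenerates and a separate Markov-type argument based on $\E D_t(\ell) = O(t/\ell^3)$ together with a crude bound on the maximum degree absorbs $D_t(\ell)$ into the additive $301 \psi^2 \log(\psi t)$ term.
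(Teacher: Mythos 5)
Much of your architecture is sound and runs parallel to the paper: centring at $d_t(\ell)$, a martingale obtained from the linear recursion via an integrating factor, a quadratic-variation bound of order $t/\ell^3$ valid while the counts stay near their means, a Freedman-type (exponential) inequality, and a stopping time at which the claimed bounds first fail, handled by a union bound. The step that fails is the inductive closure in $\ell$, and it fails structurally, not by a choice of constants. Inserting the inductive hypothesis $|Y_{u}(\ell-1)|\le C\sqrt{u\log(\psi u)}/(\ell-1)^{3/2}+\cdots$ into your drift term, and using $\beta_u/\beta_t\asymp(u/t)^{\ell/2}$, gives
\begin{equation*}
\beta_t^{-1}\sum_{u\le t}\beta_u\,\frac{\ell-1}{2u}\,\bigl|Y_{u-1}(\ell-1)\bigr|
\;\approx\; \frac{C\sqrt{\log(\psi t)}}{2\sqrt{\ell-1}}\sum_{u\le t}\Bigl(\frac{u}{t}\Bigr)^{\ell/2}u^{-1/2}
\;\approx\; \frac{C\sqrt{t\log(\psi t)}}{\sqrt{\ell-1}\,(\ell+1)},
\end{equation*}
which is $C\bigl(1-\Theta(1/\ell)\bigr)\sqrt{t\log(\psi t)/\ell^3}$. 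So the drift alone consumes all but a $\Theta(C/\ell)$ sliver of the level-$\ell$ budget, while the new martingale fluctuation you must add at this level is a constant multiple of $\sqrt{t\log(\psi t)/\ell^3}$, uniformly in $\ell$. The prefactors must therefore satisfy $C_\ell \ge C_{\ell-1}\bigl(1-\tfrac{1}{2\ell}+o(1/\ell)\bigr)+c$ with $c$ an absolute constant, which forces $C_\ell\gtrsim \ell$: no fixed prefactor ($120$ or any other) closes the induction beyond a modest threshold in $\ell$. With $C_\ell\asymp\ell$ the resulting bound is of order $\sqrt{t\log(\psi t)/\ell}$, which yields concentration only for $\ell$ up to roughly $t^{1/5}$ — about what Talagrand-type certificates already give — and is far short of the theorem. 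The triangle-inequality bound on the drift is essentially tight here, since $Y_u(\ell-1)$ genuinely fluctuates at scale $\sqrt{u/\ell^3}$ at typical times; the only possible saving is sign cancellation over $u$, which a pathwise bound cannot see.

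This is precisely the loss the paper is engineered to avoid: it never decouples the levels by taking absolute values. Instead it keeps the full bidiagonal system, writes $E_\tau$ as the terminal value of the single martingale $\widetilde{M}^\tau_t=\sum_{s\le t}B_s\,\Delta M_{s+1}$ with $B_s=A_{\tau-1}\cdots A_{s+1}$, so that the coordinate-$\ell$ increment of each transition is the signed combination $B_s(\ell,1)-B_s(\ell,k)+B_s(\ell,k+1)$, and then bounds the quadratic variation $\Phi^\ell$ by showing the adjacent-entry differences $B_s(\ell,k)-B_s(\ell,k+1)$ are small (Sections~\ref{Sbounds}--\ref{Slemma}, via the continuous approximation $f_j$ and Lemma~\ref{lem:ejs}). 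That cancellation between the ``$-1$ at degree $k$'' and ``$+1$ at degree $k+1$'' parts of a single attachment, aggregated across all levels inside one martingale, is exactly what your level-by-level induction discards. To repair your scheme you would have to propagate the signed error $Y_u(\ell-1)$ (itself a martingale transform) through the drift rather than its absolute value, which in effect reconstructs the paper's matrix-product martingale. As a secondary point, your treatment of the regime $\ell\gtrsim(t/\log t)^{1/3}$ by a Markov bound needs to be uniform over all $t\ge\tau_0$ with total failure probability $O(1/\psi)$; the paper achieves this through the maximum-degree estimate (Theorem~\ref{maxdegree}) together with the same stopping-time bookkeeping, and a plain Markov bound at each $t$ does not sum.
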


The parameter $\psi$ is a constant which may be chosen arbitrarily large in order to make the
probability of failure arbitrarily small; the results are only of interest when $t$ is larger than some $t_0(\psi)$.
All our results are stated in terms of such a parameter (denoted $\psi$ or $\omega$).

We state here an analogous result about the number $U_t(\ell)$ of vertices of degree {\em at least} $\ell$ at time $t$,
for all $\ell \ge 2$ and all $t \ge \tau_0$.  (Note that $U_t(1)$ is equal to $t$ for each $t \ge \tau_0$.)

\begin{theorem} \label{thm:main-U}
Let $\tau_0 \ge 4$ and $\psi \ge \max \left( \tau_0, 10^5 \sqrt{\tau_0-1} \log^3\tau_0\right)$ be constants.
Let $G(\tau_0)$ be any graph with $\tau_0$ vertices and $\tau_0 -1$ edges, and consider the
preferential attachment process with initial graph $G(\tau_0)$ at time $\tau_0$, and the associated Markov chain
$U = (U_t (\ell): t \ge \tau_0, \ell \in \N)$.

With probability at least $1 - \frac{4}{\psi}$, we have
$$
\left| U_t(\ell) - \frac{2t}{\ell(\ell+1)} \right| \le 45 \frac{\sqrt{t \log (\psi t)}}{\ell} + 4 \times 10^9 \psi \log^7(\psi t),
$$
for all $\ell \ge 2$, and all $t \ge \tau_0$.
\end{theorem}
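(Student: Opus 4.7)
The plan is to adapt the martingale machinery that underpins Theorem~\ref{thm:main}, exploiting the fact that $U$ has a much simpler innovation structure than $D$. For $\ell\ge 2$, the one-step identity
\begin{equation*}
U_t(\ell)-U_{t-1}(\ell)=\I_t^{(\ell)},\qquad p_t:=\E\bigl[\I_t^{(\ell)}\,\bigm|\,\cF_{t-1}\bigr]=\frac{(\ell-1)\,D_{t-1}(\ell-1)}{2(t-2)},
\end{equation*}
holds, where $\I_t^{(\ell)}$ is the indicator that the new edge at step $t$ attaches to an existing vertex of degree $\ell-1$ and $\cF_{t-1}$ is the natural filtration. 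Setting $f_t(\ell):=2t/[\ell(\ell+1)]$, I decompose
\begin{equation*}
U_t(\ell)-f_t(\ell)\,=\,\bigl[U_{\tau_0}(\ell)-f_{\tau_0}(\ell)\bigr]+M_t+R_t,
\end{equation*}
where $M_t:=\sum_{s=\tau_0+1}^{t}(\I_s^{(\ell)}-p_s)$ is a martingale with bounded differences and $R_t:=\sum_{s=\tau_0+1}^{t}\bigl(p_s-2/[\ell(\ell+1)]\bigr)$ is the random drift. Theorem~\ref{thm:main} enters the analysis of both $M_t$ and $R_t$ via the control it gives on $D_{s-1}(\ell-1)$.

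For $M_t$, on the good event of probability $1-4/\psi$ supplied by Theorem~\ref{thm:main} one reads off $p_s=2/[\ell(\ell+1)]+O\bigl(s^{-1}\sqrt{\log(\psi s)/(s\ell)}\bigr)$ after plugging the estimate for $D_{s-1}(\ell-1)$ into the formula for $p_s$. Consequently the predictable quadratic variation satisfies $\langle M\rangle_t=\sum_{s\le t}p_s(1-p_s)\le C\,t/\ell^2$ for a suitable constant $C$. Freedman's inequality at confidence level $\log(\psi t)$ then gives $|M_t|\le C'\sqrt{t\log(\psi t)}/\ell$ off a further probability-$O(1/\psi)$ event; this is the source of the dominant $45\sqrt{t\log(\psi t)}/\ell$ term in the statement.

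The drift $R_t$ is the main technical obstacle. A direct calculation gives
\begin{equation*}
R_t=\sum_{s=\tau_0+1}^{t}\frac{\ell-1}{2(s-2)}\,E_{s-1}+O\!\bigl(\log t/\ell^2\bigr),\qquad E_s:=D_s(\ell-1)-d_s(\ell-1),
\end{equation*}
with $d_s(\ell-1)=4(s-1)/[(\ell-1)\ell(\ell+1)]$. Directly substituting the $L^\infty$ bound $|E_s|=O(\sqrt{s\log(\psi s)/\ell^3})$ from Theorem~\ref{thm:main} yields only $|R_t|=O(\sqrt{t\log(\psi t)/\ell})$, which is too large by a factor $\sqrt\ell$. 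The missing $\sqrt\ell$ must come from signed cancellation of the $E_s$'s across $s$: I would apply summation by parts to rewrite $R_t$ in terms of the cumulative sums $F_s:=\sum_{r\le s}E_r$, and then bound $|F_s|$ uniformly by appealing to the inner martingale structure behind $D_s(\ell-1)$ itself (whose per-step conditional variance is of order $1/\ell^3$) via a second application of Freedman. Intersecting the good events of Theorem~\ref{thm:main} and of the Freedman estimates, and taking a union bound over the relevant values of $\ell$, yields the claim with probability at least $1-4/\psi$ uniformly in $\ell\ge 2$ and $t\ge\tau_0$; the comparatively generous additive term $4\times 10^9\,\psi\log^7(\psi t)$ in the statement is designed to absorb the various polylogarithmic losses accumulating through this cascaded argument.
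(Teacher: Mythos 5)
There is a genuine gap, and it sits exactly where you place the ``main technical obstacle'': the drift term $R_t=\sum_s \frac{\ell-1}{2(s-2)}E_{s-1}$. You correctly observe that inserting the $L^\infty$ bound on $E_s$ from Theorem~\ref{thm:main} loses a factor $\sqrt{\ell}$, and that the loss must be recovered from cancellation across $s$; but your proposed mechanism does not deliver it. After summation by parts you need a bound of order $s^{3/2}\sqrt{\log}/\ell^{2}$ on $F_s=\sum_{r\le s}E_r$, whereas the naive bound $|E_r|\lesssim\sqrt{r\log/\ell^3}$ only gives $s^{3/2}\sqrt{\log}/\ell^{3/2}$ -- the same missing $\sqrt{\ell}$ reappears one level up. And $F_s$ is not a martingale to which Freedman can be applied: $E_r$ is itself an autoregressive transform of the martingale increments (it carries the accumulated drift feedback through the lower degrees, via the lower-triangular recursion $E_t=A_{t-1}E_{t-1}+\Delta M_t$), so $F_s$ is a double sum of martingale increments with time-dependent, non-adapted-to-the-endpoint weights. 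The ``inner martingale behind $D_s(\ell-1)$'' controls $M_s(\ell-1)$, not $E_s$ and a fortiori not $F_s$; also its per-step conditional variance is of order $1/\ell^2$, not $1/\ell^3$. To quantify the cancellation one must control how fast the $E_r$ decorrelate, i.e.\ estimate the entries of the matrix products $\prod_u A_u$ (equivalently the quantities $a_j(s)$) uniformly in the relevant window -- which is precisely the content the paper supplies: for each fixed horizon $\tau$ it builds the stopped martingale $\widetilde M^\tau_t=\sum_{s\le t}\bigl(\prod_{u=s}^{\tau-1}W_u\bigr)\Delta M_s$ whose terminal value is $U_\tau(\ell)-u_\tau(\ell)$, identifies $C_s(\ell,k)=\frac{\ell-1}{k-1}a^{(\ell-1)}_{\ell-k}(s)$, and uses Lemma~\ref{lem:ejs} together with Beta-integral computations to show the quadratic variation is $\lesssim \tau/\ell^2$ inside a bootstrap stopping time. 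Your proposal asserts the outcome of this analysis but provides no route to it, so the central step is missing.

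Two further, more minor, problems. First, the claim $\langle M\rangle_t\le Ct/\ell^2$ does not follow from Theorem~\ref{thm:main}: plugging its error bound for $D_{s-1}(\ell-1)$ into $\sum_s p_s$ produces extra terms of order $\sqrt{t\log/\ell}$ and $\psi^2\ell\log^2(\psi t)$, which for $\ell$ of order up to $\sqrt{t}$ dominate $t/\ell^2$ and lead to deviations (e.g.\ $\psi t^{1/4}\log^{3/2}$) not absorbed by the additive $4\times10^9\psi\log^7(\psi t)$; large $\ell$ must be treated separately, as the paper does via Theorem~\ref{thm:early-degrees}. Second, the probability budget does not close: you condition on the event of Theorem~\ref{thm:main} (cost $4/\psi$) and then add further Freedman/union-bound failure events of order $1/\psi$, yet claim a final bound of $1-4/\psi$; at best this argument yields $1-C/\psi$ for a larger constant. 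The paper avoids both issues by not invoking Theorem~\ref{thm:main} at all and instead running a self-contained stopping-time bootstrap for $U$ in parallel to the one for $D$.
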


We do not give a detailed proof of Theorem~\ref{thm:main-U} in this paper, but we do give an indication of how to adapt
our proof of Theorem~\ref{thm:main} to give this result.
The term $\log^7(\psi t)$ appearing above could certainly be improved with more work.

Theorem~\ref{thm:main-U} seems to be the first explicit result concerning concentration of measure for $U_t(\ell)$, although some weak
concentration can be deduced from concentration results for $D_t(\ell)$.  Moreover, Talagrand's inequality~\cite{talagrand} can be applied
readily: to demonstrate that $U_t(\ell) \ge x$, a certificate of length at most $O(x\ell)$ suffices (see for instance~\cite{cmcd98} for
details of the method).  This method gives concentration for $U_t(\ell)$ up to about $\ell = t^{1/3}$, and indeed concentration for
$D_t(\ell)$ up to about $\ell = t^{1/5}$.

For constant values of $\ell$, Bollob\'as, Riordan, Spencer and Tusn\'ady~\cite{brst01} showed that $D_t(\ell)$ is concentrated within
about $t^{1/2}$ of its mean, which is best possible; a similar result for $U_t(\ell)$ follows.  For larger values of $\ell$,
in particular where $\ell$ is growing as a small power of $t$, earlier methods (including the method based on Talagrand's
inequality that we mentioned above) do not give the ``optimal'' concentration of $D_t(\ell)$ or $U_t(\ell)$
about their respective means.  Our results above do give what should be optimal concentration, up to
logarithmic factors, for $D_t(\ell)$ and $U_t(\ell)$, whenever the expectations of these random
variables tend to infinity, again up to logarithmic factors.

\smallskip

In Section~\ref{Sexpomart}, we give an exposition of a method based on exponential supermartingales, that is widely used in
the analysis of continuous time Markov processes.  We transfer the method to the discrete time setting, and state two theorems
that we shall use, and that can be applied in other similar contexts.

In Section~\ref{Seve}, we apply our method to describe the evolution of the degree of a fixed vertex in the preferential
attachment model.  We do this partly to illustrate the method, but mostly so that we can use the results in later sections.
We prove a result on the maximum degree $\Delta_t$ that is weaker than M\'ori's~\cite{mori}, but simple to prove, in the interests
of keeping the paper self-contained.

Sections~\ref{Ssimple} to~\ref{Slemma} are devoted to the proof of Theorem~\ref{thm:main}.  Section~\ref{Ssimple} contains the
main thread of the proof, and we defer some calculations to Sections~\ref{Sbounds} and~\ref{Slemma}.  One difficulty we face is that
we cannot get sharp results by working directly with the natural martingale associated to the Markov chain
$D=(D_t(\ell) : t \ge \tau_0, \ell \in \N)$, so we work instead with a suitable transform of that
martingale.  Proving concentration of measure for the transform is not straightforward, so we introduce another Markov process
derived from $D$, and apply our methods from Section~\ref{Sexpomart} to that process.

Section~\ref{Usimple} contains a brief sketch of the proof of Theorem~\ref{thm:main-U}.


\smallskip

In this paper, we deal only with the preferential attachment tree.  However, our methods will extend to more general settings, and
indeed we believe we can prove results similar to those above for the general Cooper-Frieze model.  We intend to address this elsewhere;
in a very brief final section, we make a few remarks on the difficulties involved in extending our proof to other preferential attachment models.

\section{Our method: exponential supermartingales}

\label{Sexpomart}

The following technique is adapted from a fairly standard method used in the analysis of continuous-time random processes; see
for instance \cite{dn}, \cite{l03} and~\cite{ln05}.  We have not been able to find a suitable account in the literature of a
discrete-time, and time-dependent, version of the method for us to quote, so we develop the theory here.  We provide
results that we hope may prove useful in other settings.

Let $X = (X_t: t \in \bbZ^+)$ be a discrete-time Markov chain, possibly time non-homogeneous,
with countable state space $E$ and transition matrix $P_t= (P_t(x,x'): x,x' \in E)$ at time $t$.
(Here and in what follows, our matrices -- which will normally be infinite -- have rows and
columns indexed by the countable set $E$.)
Let $(\cF_t)$ be a filtration, and suppose that $(X_t)$ is adapted.

Let $I$ denote the identity matrix.  Further, let us write, for a matrix $A$,
\begin{eqnarray*}
(A f) (x) = \sum_{x' \in E} A (x,x') f(x').
\end{eqnarray*}
Then we see that
\begin{eqnarray*}
[(P_t-I) f](x) = \sum_{x' \in E} [P_t(x,x') - I(x,x')] f(x') = \sum_{x' \in E}
P_t(x,x') (f(x') -f(x)).
\end{eqnarray*}
Further, note that
\begin{eqnarray}
\label{eq.1}
[(P_t-I)f](x) = \E [f(X_{t+1}) -f(x) \mid X_t =x],
\end{eqnarray}
that is, $[(P_t-I)f](x)$ is the expected change in $f$ at the $t$-th step given that $X_t = x$.

\begin{lemma}
\label{lem.mart}
Suppose $X_0=x_0$ a.s.
Let $f: E \to \bbR$ be a function such that
$\E [|f(X_t)| \mid X_0=x_0]$ is finite for each $t$. Then
\begin{eqnarray*}
M^f_t & = & f(X_t) -f(X_0) - \sum_{s=0}^{t-1}[(P_s -I) f](X_s) \\
& = & f(X_t) - f(X_0) - \sum_{s=0}^{t-1} \sum_{x'} P_s(X_s,x')(f(x') - f(X_s))
\end{eqnarray*}
is an $(\cF_t)$-martingale.
\end{lemma}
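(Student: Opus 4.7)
The plan is to verify the three defining properties of a martingale for $(M^f_t)$: adaptedness, integrability, and the conditional expectation property. Adaptedness of $M^f_t$ to $(\cF_t)$ is immediate, since $M^f_t$ is a measurable function of $X_0,\dots,X_t$, and $(X_t)$ is assumed adapted. For integrability, I would bound $\E|M^f_t|$ by $\E|f(X_t)|+|f(x_0)|+\sum_{s=0}^{t-1}\E|[(P_s-I)f](X_s)|$; using~\eqref{eq.1}, each summand equals $|\E[f(X_{s+1})-f(X_s)\mid X_s]|\le \E|f(X_{s+1})|+\E|f(X_s)|$, so finiteness of $\E|f(X_s)|$ for every $s$ (which is the assumption) gives $\E|M^f_t|<\infty$.

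The main step is then to check that $\E[M^f_{t+1}-M^f_t \mid \cF_t]=0$. Telescoping the definition yields
\[
M^f_{t+1}-M^f_t \;=\; f(X_{t+1})-f(X_t)-[(P_t-I)f](X_t).
\]
The last term is $\cF_t$-measurable (it depends only on $X_t$), so it pulls out of the conditional expectation. Combining this with the Markov property of $(X_t)$, which lets me replace conditioning on $\cF_t$ by conditioning on $X_t$, gives
\[
\E\bigl[f(X_{t+1})-f(X_t)\bigm|\cF_t\bigr]=\E\bigl[f(X_{t+1})-f(X_t)\bigm|X_t\bigr]=[(P_t-I)f](X_t),
\]
where the last equality is exactly~\eqref{eq.1}. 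Therefore $\E[M^f_{t+1}-M^f_t\mid\cF_t]=0$, as required.

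There is no real obstacle here; the only mild subtlety is making sure the time-inhomogeneous transition matrix $P_t$ is being evaluated at the correct step and that~\eqref{eq.1} is applied with the appropriate index, and that the integrability hypothesis is invoked to justify conditioning. Putting these ingredients together completes the proof.
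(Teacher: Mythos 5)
Your proposal is correct and follows essentially the same route as the paper: both verify integrability via the bound $\E|M^f_t|\le \E|f(X_t)|+\E|f(X_0)|+\sum_s(\E|f(X_s)|+\E|f(X_{s+1})|)$ and reduce the martingale property to the identity~(\ref{eq.1}) together with the Markov property, the only cosmetic difference being that you work with the increment $M^f_{t+1}-M^f_t$ while the paper computes $\E[M^f_{t+1}\mid\cF_t]$ directly.
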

\begin{proof}
The proof for the time homogeneous case can be found in Norris~\cite{n}.
Checking that $M^f_t$ is a martingale in the time non-homogeneous case is just as easy.
Consider
\begin{eqnarray*}
\E [ M^f_{t+1} \mid \cF_t] & = & \E \left[ f(X_{t+1}) - f(X_0) - \sum_{s=0}^t
[(P_s -I )f] (X_s) \mid \cF_t \right]\\
& = & \E [ f(X_{t+1}) \mid X_t ] - f(X_0) - [(P_t -I) f] (X_t) \\
&&\mbox{} - \sum_{s=0}^{t-1} [ (P_s -I) f] (X_s) \\
& = & f(X_t)  - f(X_0) -
\sum_{s=0}^{t-1} [ (P_s -I) f] (X_s) \\
& = & M^f_t,
\end{eqnarray*}
where we used~(\ref{eq.1}).  Also, for each $t \ge 0$,
\begin{eqnarray*}
\E |M^f_t| & \le & \E |f(X_t)| + \E |f(X_0)| + \sum_{s=0}^{t-1} \E |[(P_s-I)] f(X_s)| \\
& \le & \E |f(X_t)| + \E |f(X_0)| + \sum_{s=0}^{t-1} \left( \E |f(X_s)| + \E |f(X_{s+1})| \right) \\
& < & \infty .
\end{eqnarray*}
\end{proof}

\begin{lemma}
\label{lem.supmart}
Suppose $X_0=x_0$ a.s.
Let $f: E \to \bbR^+$ be a function.
Then
\begin{eqnarray*}
Z^f_t = \frac{f(X_t)}{f(X_0)} \exp \Big ( - \sum_{s=0}^{t-1}\frac{[(P_s -I)f](X_s)}{f(X_s)} \Big )
\end{eqnarray*}
is an $\cF_t$-supermartingale, as long as $\E Z^f_t < \infty$ for all $t$.
\end{lemma}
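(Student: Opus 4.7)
The plan is to verify the supermartingale property directly by computing the one-step conditional ratio $\mathbb{E}[Z^f_{t+1} \mid \cF_t]/Z^f_t$ and showing it is at most~$1$. Writing out the definition,
\[
\frac{Z^f_{t+1}}{Z^f_t} \;=\; \frac{f(X_{t+1})}{f(X_t)} \exp\!\left(-\frac{[(P_t-I)f](X_t)}{f(X_t)}\right),
\]
and noting that $Z^f_t$, $f(X_t)$ and the exponential factor are all $\cF_t$-measurable (since $X_t$ is), the only remaining randomness given $\cF_t$ sits in $f(X_{t+1})$. So taking conditional expectations pulls everything out except $\mathbb{E}[f(X_{t+1})\mid X_t]$.

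Next, I would invoke the identity~(\ref{eq.1}), which rewrites $\mathbb{E}[f(X_{t+1})\mid X_t=x] = f(x) + [(P_t-I)f](x)$. Setting $a_t := [(P_t-I)f](X_t)/f(X_t)$ on the event $f(X_t) > 0$, this gives
\[
\mathbb{E}\!\left[\frac{Z^f_{t+1}}{Z^f_t}\;\Big|\;\cF_t\right] \;=\; (1+a_t)\, e^{-a_t}.
\]
The elementary inequality $1+a \le e^{a}$, valid for all real~$a$, then yields $(1+a_t)e^{-a_t} \le 1$ whenever $1+a_t \ge 0$, i.e.\ $a_t \ge -1$. But $a_t \ge -1$ is automatic: because $f \ge 0$, we have $\mathbb{E}[f(X_{t+1})\mid X_t] \ge 0$, hence $[(P_t-I)f](X_t) \ge -f(X_t)$, so $a_t \ge -1$ on $\{f(X_t)>0\}$. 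This gives $\mathbb{E}[Z^f_{t+1} \mid \cF_t] \le Z^f_t$.

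The mild technical points I would address along the way are: handling states where $f(X_t) = 0$ (in which case $Z^f_t = 0$ and the chain $Z^f_s = 0$ for all $s \ge t$ by the product form, so the supermartingale inequality holds trivially from then on; alternatively one simply reads the definition as $0$ whenever a preceding factor $f(X_s)/f(X_s)$ is $0/0$ and interprets the chain as absorbed); and verifying that the integrability assumption $\mathbb{E}Z^f_t < \infty$ supplied in the hypothesis is exactly what is needed to legitimise taking conditional expectations and to have $Z^f_t \in L^1$. The main obstacle, such as it is, lies not in the short algebraic manipulation but in being careful about these measurability/integrability details; the substantive content is just the deterministic inequality $(1+a)e^{-a}\le 1$ for $a \ge -1$ applied pointwise at each step.
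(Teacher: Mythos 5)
Your proposal is correct and follows essentially the same route as the paper: both compute the one-step conditional expectation, pull out the $\cF_t$-measurable factors, use $\E[f(X_{t+1})\mid X_t]=(P_tf)(X_t)$, and conclude with the elementary inequality $x\le e^{x-1}$ (your $(1+a)e^{-a}\le 1$ is the same inequality with $x=1+a$). The extra care you take over $f(X_t)=0$ and the sign condition $a_t\ge-1$ is harmless but not needed, since $f$ is assumed positive and $(1+a)e^{-a}\le 1$ holds for all real $a$.
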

\begin{proof}
Consider
\begin{eqnarray*}
\E [Z^f_{t+1} \mid \cF_t] & = &\E [ f(X_{t+1}) \mid X_t]
\frac{1}{f(X_0)} \exp \Big ( - \sum_{s=0}^{t}\frac{[(P_s -I) f](X_s)}{f(X_s)} \Big )\\
& = & \frac{f(X_t)}{f(X_0)}\frac{(P_tf)(X_t)}{f(X_t)}
\exp \Big (1- \frac{(P_tf)(X_t)}{f(X_t)} \Big )\\
&&\mbox{}  \times \exp \Big ( - \sum_{s=0}^{t-1}\frac{[(P_s -I) f](X_s)}{f(X_s)} \Big )\\
& \le &  \frac{f(X_t)}{f(X_0)} \exp \Big (-1+ \frac{(P_tf)(X_t)}{f(X_t)}\Big )
\exp \Big (1- \frac{(P_tf)(X_t)}{f(X_t)} \Big ) \\
&&\mbox{} \times \exp \Big ( - \sum_{s=0}^{t-1}\frac{[(P_s -I) f](X_s)}{f(X_s)} \Big )\\
& = & Z^f_t,
\end{eqnarray*}
where we have used the fact that $\E [ f(X_{t+1}) \mid X_t]=P_tf$, and the fact that $x \le \exp (-1+x)$ for all $x$.
\end{proof}

Note that, for a continuous-time Markov chain, the analogue of $Z^f_t$ in  Lemma~\ref{lem.supmart} is in fact a
martingale; see for example Lemma~3.2 in Chapter~4 of~\cite{ek}. In the time-continuous case, the matrix
$(P_t-I)$ is replaced by the generator matrix $A_t$ of the Markov chain, which is
the derivative at time $t$ of its transition semigroup $P_t$.

\smallskip

We shall show how, under certain conditions, Lemma~\ref{lem.mart} and Lemma~\ref{lem.supmart} can
be used to prove a law of large numbers for a Markov chain.
\begin{lemma}
\label{lem.exp-mart-1}
Let $g: E \to \R$ be a function, and suppose that $X_0=x_0$ a.s., for some $x_0 \in E$.
For $\theta \in \R$, let
$$
\phi^g_s (x, \theta) = \sum_{x' \in E} P_s (x,x') \Big (e^{\theta (g(x') - g(x))} - 1- \theta (g(x') - g(x)) \Big ).$$
Then
$$
Z^g_t (\theta) = \exp \Big (\theta M^g_t - \sum_{s=0}^{t-1} \phi^g_s (X_s,\theta ) \Big )
$$
is an $\cF_t$-supermartingale, as long as $\E Z^g_t (\theta)  < \infty$ for each $t$.
\end{lemma}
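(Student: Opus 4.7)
The plan is to apply Lemma~\ref{lem.supmart} to the function $f(x) = e^{\theta g(x)}$ and verify that the resulting supermartingale $Z^f_t$ coincides with $Z^g_t(\theta)$. Since $f$ is strictly positive, this is a legitimate choice, and the hypothesis $\E Z^g_t(\theta) < \infty$ will translate directly into the integrability assumption required by Lemma~\ref{lem.supmart}.

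The core computation is to evaluate $[(P_s - I)f](x)/f(x)$ for this choice of $f$. Factoring out $e^{\theta g(x)}$ gives
\begin{eqnarray*}
\frac{[(P_s - I)f](x)}{f(x)}
 &=& \sum_{x' \in E} P_s(x,x') \bigl(e^{\theta(g(x') - g(x))} - 1\bigr) \\
 &=& \phi^g_s(x, \theta) + \theta \sum_{x' \in E} P_s(x,x') (g(x') - g(x)) \\
 &=& \phi^g_s(x, \theta) + \theta\,[(P_s - I)g](x),
\end{eqnarray*}
where the second line isolates the quadratic-and-higher terms of the exponential, matching the definition of $\phi^g_s$. Substituting this identity into the formula for $Z^f_t$ supplied by Lemma~\ref{lem.supmart}, the telescoping sum $\theta \sum_{s=0}^{t-1} [(P_s - I)g](X_s)$ combines with $\theta(g(X_t) - g(X_0))$ to reproduce precisely $\theta M^g_t$, so one obtains
\begin{eqnarray*}
Z^f_t = \exp\Bigl( \theta M^g_t - \sum_{s=0}^{t-1} \phi^g_s(X_s, \theta) \Bigr) = Z^g_t(\theta).
\end{eqnarray*}

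There is no genuine obstacle here; the result is essentially a re-parametrisation of Lemma~\ref{lem.supmart}. The only point that requires a little care is the integrability hypothesis: Lemma~\ref{lem.supmart} demands $\E Z^f_t < \infty$ for each $t$, and since $Z^f_t = Z^g_t(\theta)$ pointwise, this is exactly the assumption made in the statement. Once Lemma~\ref{lem.supmart} is invoked, the supermartingale property is inherited without further work, completing the proof.
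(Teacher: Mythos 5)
Your proposal is correct and follows essentially the same route as the paper: apply Lemma~\ref{lem.supmart} with the exponential function $f(x)=e^{\theta g(x)}$ (the paper uses $e^{\theta(g(x)-g(x_0))}$, which differs only by a constant that cancels) and verify the identity $Z^f_t = Z^g_t(\theta)$ by splitting $e^{\theta(g(x')-g(x))}-1$ into its linear part and the remainder $\phi^g_s$. The computation and the handling of the integrability hypothesis match the paper's argument.
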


\begin{proof}
The result is a consequence of
Lemma~\ref{lem.supmart}, with $f(x) = e^{\theta (g(x)-g(x_0))}$.  That lemma tells us that
$Z^f_t$ is a supermartingale, and we need only verify that $Z^f_t = Z^g_t(\theta)$ for this choice
of $f$.

The calculation goes as follows:
\begin{eqnarray*}
Z^f_t & = & \frac{f(X_t)}{f(X_0)} \exp \Big ( - \sum_{s=0}^{t-1}\frac{[(P_s -I) f](X_s)}{f(X_s)}\Big )\\
& = & \exp ( \theta (g(X_t) - g(X_0) )) \\
&& \mbox{}\times \exp \Big ( - \sum_{s=0}^{t-1}
\frac{\sum_{x'} P_s (X_s,x') [ e^{\theta (g(x')-g(X_0))} - e^{\theta (g(X_s) - g(X_0))}]}{e^{\theta (g(X_s) -g(X_0))}} \Big )\\
& = & \exp \Big ( \theta (g(X_t) -g(X_0)) - \sum_{s=0}^{t-1} \sum_{x'} P_s (X_s,x')
[e^{\theta (g(x') -g(X_s))} -1 ] \Big )\\
& = &  \exp \Big ( \theta (g(X_t) -g(X_0)) - \theta \sum_{s=0}^{t-1} \sum_{x'} P_s (X_s,x')
(g(x') -g(X_s)) \\
&& \mbox{} - \sum_{s=0}^{t-1} \phi^g_s (X_s,\theta)   \Big )\\
& = & \exp \Big (\theta M_t^g - \sum_{s=0}^{t-1} \phi^g_s (X_s,\theta) \Big ).
\end{eqnarray*}
\end{proof}

Note that, while $X_t$ remains in a `good' set $S_t$ of states $x$ where $e^{\theta (g(x')-g(x))}$ is bounded by some constant (possibly
depending on $t$) over all $x'$ such that $P_t (x,x') > 0$ and all $x \in S_t$, (i.e., the size of changes in $g$ stays uniformly bounded),
then the finiteness assumption of Lemma~\ref{lem.exp-mart-1} holds.  Furthermore, we can approximate $e^{\theta (g(x')-g(X_t))}$ using a
Taylor expansion.

In many applications, in particular those in this paper, $|g(x')-g(x)|$ will be
uniformly bounded over the entire state space $E$ and over all transition matrices $P_t$: if we work up to some fixed
time $\tau$, then it suffices to have the bound valid for $t< \tau$.  We assume from now on that, for every $\tau \ge 0$, there is
some real number $J = J(\tau)$ such that $g$ satisfies:
\begin{equation} \label{eq:bounded}
\sup_{s < \tau ,x} \sup_{x': P_s (x,x') \not = 0} |g(x') - g(x)| \le J < \infty.
\end{equation}

Now we fix some real number $\alpha > 0$, and restrict attention to values of $\theta$ such that $|\theta | \le \alpha$.
We use the identity
$$
e^z - 1 - z = z^2 \int_{r=0}^1 e^{rz} (1-r) \, dr
$$
to deduce that
\begin{eqnarray*}
\phi^g_s (x,\theta) & = & \sum_{x'} P_s (x,x') \theta^2 (g(x')-g(x))^2 \int_0^1 e^{r \theta (g(x')-g(x))} (1-r)\, dr\\
&\le & \theta^2 \sum_{x'} P_s(x,x') (g(x')-g(x))^2 e^{\alpha J} \int_0^1 (1-r) \, dr \\
& = & \frac12 \theta^2 e^{\alpha J} \sum_{x'} P_s(x,x') (g(x') - g(x))^2 .
\end{eqnarray*}

\smallskip

Suppose that $X_0=x_0$ a.s., for some $x_0 \in E$, and that we study the chain up to some
time $\tau > 0$.  Our aim is to show that $M_t^g$ remains small over the period $0\le t \le \tau$.
For a precise statement, we need a few more definitions.

We set
$$
\Phi^g_t(X) = \sum_{s=0}^t\sum_{x'} P_s(X_s,x') (g(x')-g(X_s))^2,
$$
so that
$$
Z^g_t(\theta) \ge \exp \left( \theta M^g_t - \frac12 \theta^2 e^{\alpha J} \Phi^g_{t-1}(X) \right),
$$
for all $\theta$ with $|\theta| \le \alpha$.

Now let $R$ be a positive real number, and set
\begin{eqnarray*}
T_R = \inf \{t \ge 0: \Phi^g_t(X) > R \}.
\end{eqnarray*}
Thus, for $t \le T_R$, we have $\Phi^g_{t-1}(X) \le R$, and therefore
$$
Z^g_t(\theta) \ge \exp \Big (\theta M^g_t - \frac12 \theta^2 e^{\alpha J} R \Big ),
$$
provided $|\theta| \le \alpha$.

Also, for $\delta>0$, we define
\begin{eqnarray*}
T_g^+(\delta) = \inf \{t: M^g_t > \delta \},
\quad T_g^-(\delta) = \inf \{t: M^g_t < -\delta \},
\end{eqnarray*}
and
\begin{eqnarray*}
T_g(\delta) = T_g^+(\delta) \wedge T_g^-(\delta) = \inf \{ t : |M^g_t| > \delta \}.
\end{eqnarray*}

\smallskip

\begin{lemma}
\label{lem.key-1}
Fix $\tau > 0$ and $R >0$, and let $g: E \to \R$ be a function satisfying {\rm (\ref{eq:bounded})} for some $J \in \R$.
Also, let $\alpha > 0$ and $\delta >0$ be any constants such that $\delta \le e^{\alpha J} \alpha R$.  Then
\begin{eqnarray*}
\P \Big (T_g (\delta) \le T_R \land \tau \Big) \le  2e^{-\delta^2/(2 R e^{\alpha J}) },
\end{eqnarray*}
and hence
\begin{eqnarray*}
\P \Big ((\sup_{0 \le t \le \tau} |M_t^g| > \delta) \land (T_R \ge \tau) \Big)
\le  2e^{-\delta^2/(2 R e^{\alpha J}) }.
\end{eqnarray*}
In particular:
\begin{itemize}
\item[(i)] for any $\omega \le R/J^2$, we obtain the following by choosing
$\alpha = \log 2/J$ and $\delta = \sqrt{\omega R}$:
\begin{eqnarray*}
\lefteqn{\P \left(\left(\sup_{0 \le t \le \tau} |M_t^g| > \sqrt{\omega R}\right) \land (T_R \ge \tau) \right)} \\
&\le& \P \Big( T_g \big(\sqrt{\omega R}\big) \le T_R \land \tau \Big) \\
&\le& 2e^{-\omega/4};
\end{eqnarray*}
\item[(ii)] for any $\omega \ge R/J^2$, we set
$\alpha =\frac{1}{J} \log \left( 2\omega J^2/R \right)$ and $\delta = \omega J$, and obtain
\begin{eqnarray*}
\P \Big ((\sup_{0 \le t \le \tau} |M_t^g| > \omega J) \land (T_R \ge \tau) \Big)
\le \P \Big (T_g (\omega J) \le T_R \land \tau \Big)
\le 2e^{-\omega/4}.
\end{eqnarray*}
\end{itemize}
\end{lemma}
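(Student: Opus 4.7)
\medskip

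\noindent\textbf{Proof proposal.} The plan is to combine the exponential supermartingale $Z^g_t(\theta)$ supplied by Lemma~\ref{lem.exp-mart-1} with the quadratic lower bound $Z^g_t(\theta) \ge \exp\bigl(\theta M^g_t - \tfrac12 \theta^2 e^{\alpha J} R\bigr)$ (valid for $|\theta|\le\alpha$ and $t\le T_R$) to obtain a Chernoff-type tail bound for the stopped martingale $M^g_t$. Since $g$ satisfies (\ref{eq:bounded}) on the time window $s<\tau$, $Z^g_t(\theta)$ is integrable for $t\le\tau$, so Lemma~\ref{lem.exp-mart-1} genuinely applies.

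Fix $\theta\in(0,\alpha]$ and set $T := T^+_g(\delta)\land T_R\land\tau$, a bounded $(\cF_t)$-stopping time. Optional stopping gives $\E Z^g_T(\theta)\le Z^g_0(\theta)=1$. On the event $E^+:=\{T^+_g(\delta)\le T_R\land\tau\}$ we have $T=T^+_g(\delta)\le T_R$, so the quadratic lower bound applies and $M^g_T>\delta$ by definition of $T^+_g(\delta)$; hence
\begin{equation*}
\exp\!\Bigl(\theta\delta - \tfrac12\theta^2 e^{\alpha J} R\Bigr)\,\P(E^+) \;\le\; \E\bigl[Z^g_T(\theta)\,\mathbf{1}_{E^+}\bigr] \;\le\; 1.
\end{equation*}
The optimal choice is $\theta^\ast=\delta/(Re^{\alpha J})$, and $\theta^\ast\le\alpha$ is exactly the standing hypothesis $\delta\le\alpha R e^{\alpha J}$. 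Substituting yields $\P(E^+)\le\exp\bigl(-\delta^2/(2Re^{\alpha J})\bigr)$. Running the identical argument with $-\theta$ in place of $\theta$ and with $T^-_g(\delta)$ in place of $T^+_g(\delta)$ bounds $\P(T^-_g(\delta)\le T_R\land\tau)$ by the same quantity; a union bound then produces the factor of $2$ in the first displayed inequality. The second (``and hence'') inequality is immediate: on $\{T_R\ge\tau\}$, if $\sup_{0\le t\le\tau}|M^g_t|>\delta$ then $T_g(\delta)\le\tau\le T_R$.

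For the two specializations, I would simply verify that the chosen $(\alpha,\delta)$ satisfy the hypothesis and produce the exponent $-\omega/4$. In case (i), $\alpha J=\log 2$ gives $e^{\alpha J}=2$, so $\delta^2/(2Re^{\alpha J})=\omega R/(4R)=\omega/4$, and the hypothesis reduces to $\sqrt{\omega R}\le (2R\log 2)/J$, i.e.\ $\omega\le 4(\log 2)^2 R/J^2$, which is implied by $\omega\le R/J^2$ since $4(\log 2)^2>1$. In case (ii), $e^{\alpha J}=2\omega J^2/R$, so $\delta^2/(2Re^{\alpha J})=\omega^2 J^2/(4\omega J^2)=\omega/4$, and the hypothesis $\omega J\le \alpha R e^{\alpha J}=2\omega J\log(2\omega J^2/R)$ reduces to $2\omega J^2/R\ge e^{1/2}$, which follows from $\omega\ge R/J^2$.

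The main obstacle I anticipate is bookkeeping rather than any deep step: the quadratic lower bound on $Z^g_t(\theta)$ is only legal for $t\le T_R$, so one must stop at $T_R\land\tau$ (not merely at $\tau$) before the Chernoff estimate is applied, and the optimization over $\theta$ must respect the constraint $|\theta|\le\alpha$ under which Lemma~\ref{lem.exp-mart-1} and the Taylor bound were derived. The condition $\delta\le\alpha Re^{\alpha J}$ in the lemma is precisely what allows the unconstrained optimizer $\theta^\ast=\delta/(Re^{\alpha J})$ to lie in the permitted interval.
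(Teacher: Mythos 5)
Your proposal is correct and follows essentially the same route as the paper: the exponential supermartingale from Lemma~\ref{lem.exp-mart-1}, optional stopping, a Chernoff/Markov bound on the event $\{T_g^{\pm}(\delta)\le T_R\land\tau\}$, optimisation at $\theta^\ast=\delta/(Re^{\alpha J})$ (with the hypothesis $\delta\le\alpha Re^{\alpha J}$ guaranteeing $\theta^\ast\le\alpha$), a union bound for the factor~2, and direct verification of the two parameter choices. Your only deviation — stopping at the bounded time $T_g^+(\delta)\land T_R\land\tau$ rather than at $T_g^+(\delta)$ itself — is a harmless (indeed slightly cleaner) way to justify optional stopping.
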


\begin{proof}
Fix any real $\theta$ with $|\theta| \le \alpha$.  For ease of notation, we write $T_g^+$ for $T_g^+(\delta)$ and
$T_g^-$ for $T_g^-(\delta)$.  By Lemma~\ref{lem.exp-mart-1}, $(Z_t^g(\theta))_{t \ge 0}$ is a supermartingale.

On the event $\{T_g^+ \le T_R \land \tau\}$, we have $M^g_{T_g^+} > \delta$ and
$$
Z^g_{T^+_g}(\theta) > \exp\left( \theta \delta - \frac12 \theta^2 e^{\alpha J} R \right).
$$
By optional stopping,
\begin{eqnarray*}
\E [ Z^g_{T^+_g}(\theta)] \le \E [Z^g_0(\theta)] = 1.
\end{eqnarray*}
Hence, using the Markov inequality,
\begin{eqnarray*}
\P ( T_g^+ \le T_R \land \tau) & \le & \P \left(Z^g_{T^+_g}(\theta) >
\exp \left(\delta \theta - \frac12 \theta^2 e^{\alpha J} R  \right) \right) \\
& \le  & \exp\left( -\delta \theta + \frac12 \theta^2 e^{\alpha J} R \right).
\end{eqnarray*}
Optimising in $\theta$, we find that $\theta = \delta/ e^{\alpha J} R$ is the best choice,
and note that $|\theta | \le \alpha$. This yields
\begin{eqnarray*}
\P ( T_g^+ \le T_R \land \tau) \le  \exp \left(-\delta^2 \Big/2 e^{\alpha J} R \right).
\end{eqnarray*}
An almost identical calculation gives
\begin{eqnarray*}
\P ( T_g^- \le T_R \land \tau) \le  \exp \left(-\delta^2 \Big/2 e^{\alpha J} R \right),
\end{eqnarray*}
and the first part of the result follows.

The two special cases are obtained by choosing the given values of $\alpha$ and $\delta$, and verifying
that
\begin{eqnarray*}
\delta \le e^{\alpha J} \alpha R \quad \mbox{ and } \quad
\frac{\delta^2}{2 e^{\alpha J} R} \ge \frac {\omega}{4}
\end{eqnarray*}
in each case.
\end{proof}

We summarise what we have proved in a theorem.

\begin{theorem} \label{thm.key}
Let $X = (X_t)_{t\in \Z^+}$ be a discrete-time Markov chain, with countable state space $E$ and transition
matrix $P_t$ at time $t$, and suppose that $(X_t)$ is adapted to a filtration $(\cF_t)$.
Let $g: E \to \R$ be any function, $\tau$ any natural number, and $J$ any real number, satisfying
$$
\sup_{s < \tau, x \in E} \sup_{x'\in E: P_s(x,x') > 0} |g(x')-g(x)| \le J.
$$
Set
$$
\Phi^g_t (X) = \sum_{s=0}^t \sum_{x'\in E} P_s(X_s,x') \big(g(x')-g(X_s)\big)^2.
$$
Let $R >0$ be a real number, and set
$$
T_R = \inf \{ t \ge 0 : \Phi^g_t(X) > R \}.
$$
Then
$$
M^g_t = g(X_t) - g(X_0) - \sum_{s=0}^{t-1}\sum_{x'\in E} P_s(X_s,x')(g(x') - g(X_s))
$$
is an $(\cF_t)$-martingale and, for any $\omega > 0$,
$$
\P \left( \left( \sup_{0\le t\le \tau} |M^g_t| > \max\left(\sqrt{\omega R}, \omega J\right) \right)
\wedge (T_R \ge \tau) \right) \le 2 e^{-\omega/4}.
$$
\end{theorem}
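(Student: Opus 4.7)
Since the statement of Theorem~\ref{thm.key} is a consolidated form of Lemma~\ref{lem.key-1}, the plan is simply to invoke the earlier results after a short case split. The martingale property of $M_t^g$ follows at once from Lemma~\ref{lem.mart}: the hypothesis that $|g(x')-g(x)| \le J$ over all $s<\tau$ and all pairs $x,x'$ with $P_s(x,x')>0$ guarantees $|g(X_t)| \le |g(X_0)| + tJ$ for every $t \le \tau$, so the integrability condition of that lemma is met and $M_t^g$ is an $(\cF_t)$-martingale.

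For the tail bound, I would split on the size of $\omega$ relative to $R/J^2$. If $\omega \le R/J^2$ then $\omega^2 J^2 \le \omega R$, so $\omega J \le \sqrt{\omega R}$ and hence $\max\bigl(\sqrt{\omega R},\,\omega J\bigr) = \sqrt{\omega R}$; part~(i) of Lemma~\ref{lem.key-1} then delivers the desired $2e^{-\omega/4}$ bound. Conversely, if $\omega \ge R/J^2$ the inequality reverses, so $\max\bigl(\sqrt{\omega R},\,\omega J\bigr) = \omega J$, and part~(ii) of the lemma gives the same bound. Combining the two regimes produces the single inequality in the theorem; note that the statement need only be checked up to time $\tau$ (and up to $T_R$), which is exactly the regime covered by Lemma~\ref{lem.key-1}.

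The main \emph{obstacle} is purely bookkeeping: one must verify that the case split on $\omega$ matches the threshold used inside Lemma~\ref{lem.key-1}, and that the specific choices of $\alpha = (\log 2)/J$, $\delta = \sqrt{\omega R}$ in case~(i), and $\alpha = (1/J)\log(2\omega J^2/R)$, $\delta = \omega J$ in case~(ii), satisfy the two constraints $\delta \le e^{\alpha J}\alpha R$ and $\delta^2/(2 e^{\alpha J} R) \ge \omega/4$. These verifications are exactly those already carried out inside the proof of Lemma~\ref{lem.key-1}, so no new probabilistic content is required beyond the exponential-supermartingale machinery of Lemmas~\ref{lem.supmart}--\ref{lem.exp-mart-1}. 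The theorem is thus a clean repackaging, convenient for the applications to preferential attachment that follow.
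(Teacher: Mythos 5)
Your proposal is correct and matches the paper's own route: the paper presents Theorem~\ref{thm.key} explicitly as a summary of Lemma~\ref{lem.key-1}, so the intended argument is exactly your case split on $\omega \lessgtr R/J^2$ (so that $\max(\sqrt{\omega R},\omega J)$ is covered by parts (i) and (ii) respectively), together with the martingale property from Lemma~\ref{lem.mart}. The constraint checks $\delta \le e^{\alpha J}\alpha R$ and $\delta^2/(2e^{\alpha J}R)\ge \omega/4$ you mention are indeed already carried out inside the proof of Lemma~\ref{lem.key-1}, so nothing new is needed.
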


We have demanded that the state space be countable, so that we can express our results in terms of sums over the
state space.  It suffices to assume instead that, for any state $x\in E$, and any time $s$, there is a countable set
$E(x,s) \subseteq E$ such that $\sum_{x' \in E} P_s(x,x') = 1$.  Indeed, under this assumption, if $X_0 = x_0$ a.s.\ for some state
$x_0$, then there is a countable set $E' \subseteq E$ such that, a.s., $X_t \in E'$ for all $t\ge 0$.

We also remark that, in the statement above, we begin our consideration of the chain at time~0.  When applying Theorem~\ref{thm.key}
in the analysis of the preferential attachment tree, we
shall instead start at some fixed time $\tau_0$: of course this makes no substantive difference.

\smallskip

In some instances, for example in Section~\ref{Seve}, we will want to bound the probability that $|M_t^g| \le \delta(t)$ for all
$t \le \tau$, where $\delta(t)$ is a suitable function growing with $t$.  One easy way to do this is to apply the above
theorem for each value $t \le \tau$, choosing an appropriate value $R(t)$ of $R$ at each time.  This approach has the drawback
that it is necessary to sum the probabilities of failure over $t \le \tau$.  Better bounds may be obtained by applying the lemma only
for a sparse sequence of values $t$, as we illustrate in the proof of the following result.

\smallskip

The notation here is essentially as for Theorem~\ref{thm.key}.  We again have a real-valued function $g$ defined on
the state space $E$ of a Markov chain $X$, and the change in $g$ is uniformly bounded by $J$ over all possible transitions of the chain.
The function $\Phi^g_t(X)$ is as in Theorem~\ref{thm.key}.  Now we have a non-decreasing function $R: \Z^+ \to \R^+$, and we set
$$
T_R = \inf\{ t\ge 0 : \Phi^g_t(X) > R(t) \}.
$$
Also, for any non-decreasing function $\delta: \Z^+ \to \R^+$, we define an associated stopping time
$$
T_g(\delta) = \inf\{ t\ge 0: |M_t^g| > \delta(t) \}.
$$
With the notation as above, we have the following result.

\begin{theorem} \label{thm.sparse}
\begin{itemize}
\item [(a)]
Fix $\omega > 4$, and let $\delta(t) = \max(\omega J, 2 \sqrt{\omega R(t-1)})$ for $t \ge 1$.
Then, for any $\tau>0$ such that $R(\tau-1) \ge \omega J^2$,
$$
\P (T_g(\delta) \le T_R \wedge \tau ) \le 2 \log \left(\frac{8 R(\tau-1)}{\omega J^2}\right) e^{-\omega/4}.
$$
\item [(b)] Fix $\psi \ge 4/J^2$, and suppose that $R(t)$ tends to infinity as $t\to \infty$.
For $t \ge 1$, let $\widetilde\delta(t) = 2 \max\left(\psi J \log (\psi J^2), \sqrt{\psi R(t-1)\log R(t-1)}\right)$.  Then
$$
\P (T_g(\widetilde\delta) \le T_R) \le 5 e^{-\psi/4}.
$$
\end{itemize}
\end{theorem}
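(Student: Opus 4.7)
The plan is to prove both parts by a dyadic decomposition of time according to $R(t-1)$, applying Theorem~\ref{thm.key} on each dyadic band. The factor~$2$ in front of $\sqrt{\omega R(t-1)}$ in $\delta(t)$ (and of $\sqrt{\psi R(t-1)\log R(t-1)}$ in $\widetilde\delta(t)$) is exactly what is needed to absorb a quadrupling of $R$ between adjacent bands: if $R(t-1)\in(R_{i-1},R_i]$ with $R_i = 4R_{i-1}$, then $2\sqrt{\omega R(t-1)} > \sqrt{\omega R_i}$, so Theorem~\ref{thm.key} applied with the \emph{constant} $R = R_i$ already gives a deviation bound of the right shape on that band.

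For part~(a), I set $R_i = 4^i\omega J^2$ for $i = 0,1,\dots,k$ with $k = \lceil\log_4(R(\tau-1)/(\omega J^2))\rceil$, and let $I_i = \{1\le t\le\tau: R_{i-1} < R(t-1)\le R_i\}$ (with $R_{-1}:=0$). On the event $\{T_g(\delta)\le T_R\wedge\tau\}$ the first $t$ at which $|M^g_t|$ exceeds $\delta(t)$ lies in some $I_i$, and there $\Phi^g_{t-1}(X)\le R(t-1)\le R_i$ while $\delta(t)\ge\max(\sqrt{\omega R_i},\omega J)$: for $i\ge 1$ because $R(t-1) > R_i/4$, and for $i = 0$ because $\delta(t)\ge\omega J = \sqrt{\omega R_0}$ with $R_0 = \omega J^2$. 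Applying Theorem~\ref{thm.key} with $R\leftarrow R_i$ and $\tau\leftarrow\sup I_i$ bounds the contribution of band~$i$ by $2e^{-\omega/4}$; a union bound over the at most $k+1$ bands, together with the elementary estimate $k+1\le\log(8R(\tau-1)/(\omega J^2))$ (valid since $R(\tau-1)\ge\omega J^2$), yields the claimed bound.

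For part~(b) the same decomposition is used, but there is no fixed end time, so one needs infinitely many bands and $\omega$ must grow with the band index in order to keep the failure probabilities summable. I would take $R_0 := 2\psi J^2\log(\psi J^2)$, $R_i := 4^iR_0$, and use band-dependent constants $\omega_0 := 2\psi\log(\psi J^2)$ and $\omega_i := \psi\log R_{i-1}$ for $i\ge 1$, chosen so that $\widetilde\delta(t)\ge\max(\sqrt{\omega_iR_i},\omega_iJ)$ on each band: on band~$0$, $\sqrt{\omega_0R_0} = \omega_0J = 2\psi J\log(\psi J^2)\le\widetilde\delta(t)$; on band $i\ge 1$, $\sqrt{\omega_iR_i} = 2\sqrt{\psi R_{i-1}\log R_{i-1}}\le\widetilde\delta(t)$ and $R_i\ge\omega_iJ^2$ (so the max is attained by the square root). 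Theorem~\ref{thm.key} then gives $2e^{-\omega_i/4} = 2R_{i-1}^{-\psi/4}$ on band $i\ge 1$ and $2(\psi J^2)^{-\psi/2}$ on band~$0$; summing, the tail over $i\ge 1$ is a geometric series of ratio $4^{-\psi/4}$ bounded by $(4/3)R_0^{-\psi/4}$ (for $\psi\ge 4$), and using $\psi J^2\ge 4$ the total fits inside $5e^{-\psi/4}$. In the remaining range where $5e^{-\psi/4}\ge 1$, the trivial bound already suffices.

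The main obstacle is not the core dyadic idea but the bookkeeping of constants in part~(b): one must tune $R_0$ so that the two branches of $\widetilde\delta(t)$ swap cleanly at $R(t-1)\approx R_0$ (which is why the specific value $R_0 = 2\psi J^2\log(\psi J^2)$ is chosen), verify $R_i\ge\omega_iJ^2$ on every band $i\ge 1$ (an easy induction using $\psi J^2\ge 4$ and the growth of $R_i$), and check that the prefactor of the geometric sum really does fit inside the stated constant~$5$. None of these points is deep, but together they dictate the precise choice of $R_0$ and of the $\omega_i$.
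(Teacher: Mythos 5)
Your dyadic-band decomposition is essentially the paper's own argument: the paper partitions time at adaptively defined instants $\tau_j$ at which $R$ has quadrupled (starting from the first time $R$ exceeds $\omega J^2$, resp.\ $2\psi J^2\log(\psi J^2)$), while you use the fixed levels $R_i=4^iR_0$; this difference is cosmetic. Your per-band thresholds, the block-dependent choices $\omega_0=2\psi\log(\psi J^2)$ and $\omega_i=\psi\log R_{i-1}$ in part (b), the band count $\le\log(8R(\tau-1)/\omega J^2)$ in part (a), and the geometric summation with the trivial-bound remark for small $\psi$ all match the paper's proof and check out numerically.

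The one step that does not go through as written is the invocation of Theorem~\ref{thm.key} on each band. Theorem~\ref{thm.key} bounds the probability of the event $\bigl(\sup_{t\le\tau'}|M^g_t|>\max(\sqrt{\omega R_i},\omega J)\bigr)\wedge(T_{R_i}\ge\tau')$ for a deterministic horizon $\tau'$; taking $\tau'=\sup I_i$, your event (the first exceedance time $t^*$ lies in $I_i$ and $t^*\le T_R\wedge\tau$) is \emph{not} contained in it: $t^*\le T_R$ only yields $\Phi^g_{t^*-1}(X)\le R(t^*-1)\le R_i$, i.e.\ $T_{R_i}\ge t^*$, while $\Phi^g$ may exceed $R_i$ strictly between $t^*$ and $\sup I_i$, so $T_{R_i}\ge\sup I_i$ can fail. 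What you actually need is the stopping-time form $\P\bigl(T_g(\delta')\le T_{R_i}\wedge\sup I_i\bigr)\le 2e^{-\omega/4}$ with the constant threshold $\delta'=\max(\sqrt{\omega R_i},\omega J)$, which is precisely the first conclusion of Lemma~\ref{lem.key-1} (the paper records only the weaker form in Theorem~\ref{thm.key}, and its own proof of this theorem cites Lemma~\ref{lem.key-1} for exactly this reason). With that substitution your containment $T_g(\delta')\le t^*\le T_{R_i}\wedge\sup I_i$ is valid and the rest of your argument, in both parts, goes through unchanged.
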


\begin{proof}
For~(a), we define a finite sequence of times $\tau_1, \tau_2, \dots$ as follows.
Let $\tau_1$ be the first $t$ for which $R(t) > \omega J^2$: by assumption $\tau_1 \le \tau$.
For $j > 1$, if $\tau_j < \tau$ then we set
$$
\tau_{j+1} = \inf \{ t > \tau_j : R(t) > 4 R(\tau_j)\} \land \tau.
$$
The final term $\tau_N$ in the sequence is the first $\tau_j$ with $\tau_j = \tau$, and the number $N$ of
terms in the sequence is then no greater than
$2 + \log_4 \left(\frac{R(\tau-1)}{\omega J^2}\right) \le \log \left(\frac{8 R(\tau-1)}{\omega J^2}\right)$.

We first apply Lemma~\ref{lem.key-1}(ii) with $\tau = \tau_1$ and $R=R(\tau_1-1)$, noting that
$\omega \ge R(\tau_1-1)/J^2$ by definition of $\tau_1$.  We obtain that
$$
\P (T_g(\omega J) \le T_R \land \tau_1) \le \P (T_g(\omega J) \le T_{R(\tau_1-1)} \land \tau_1) \le 2e^{-\omega/4}.
$$
As $\delta(t) \ge \omega J$ for all $t \le \tau_1$, this means that, with probability at least
$1-2e^{-\omega/4}$, $|M_t^g| \le \delta(t)$ for all times $t \le T_R \land \tau_1$.

\smallskip

For each of the times $\tau = \tau_j$ ($j \ge 2$), we apply Lemma~\ref{lem.key-1}(i) with $R=R(\tau_j-1)$,
noting now that $\omega \le R(\tau_j-1)/J^2$ by choice of $\tau_1$.  We obtain that
$$
\P \left( T_g\left(\sqrt{\omega R(\tau_j-1)}\right) \le T_R \land \tau_j \right) \le 2 e^{-\omega/4}.
$$
For each $t$ with $\tau_{j-1} < t \le \tau_j$, we have
$$
\delta(t) \ge \delta (\tau_{j-1}+1) \ge 2 \sqrt{\omega R(\tau_{j-1})} \ge \sqrt{\omega R(\tau_j -1)},
$$
since $R(\tau_j-1) \le 4 R(\tau_{j-1})$ by definition of $\tau_j$.
We conclude that, for each $j\ge 2$, with probability at least $1-2e^{-\omega/4}$,
$|M_t^g| \le \delta(t)$ for all times $t$ with $\tau_{j-1} < t \le T_R \land \tau_j$.

It now follows that, with probability at least $1-2Ne^{-\omega/4}$,
we have $|M_t^g| \le \delta(t)$ for all times $t \le T_R \land \tau$, and part~(a) follows.

\smallskip

The proof of part~(b) is very similar in style.  This time we let $\tau_1$ be the first $t$ for which
$R(t) > 2 \psi J^2 \log (\psi J^2)$.  Given $\tau_j$, we let $\tau_{j+1}$ be the minimum $t$
such that $R(t) > 4 R(\tau_j)$.  The assumption that $R(t)$ tends to infinity ensures that we obtain an
infinite sequence $(\tau_j)_{j\ge 1}$ of times.

We apply Lemma~\ref{lem.key-1}(ii) with $\omega = 2 \psi \log (\psi J^2)$, $\tau = \tau_1$, and
$R=R(\tau_1-1)$, noting that $\omega \ge R(\tau_1-1)/J^2$ by choice of $\tau_1$.  We obtain that
$$
\P \left( T_g(2\psi J \log (\psi J^2)) \le T_R \land \tau_1 \right) \le 2e^{-\psi/4}.
$$
Since $\widetilde\delta(t) \ge 2 \psi J \log (\psi J^2)$ for all $t$, this implies that, with probability
at least $1-2e^{-\psi/4}$, $|M_t^g| \le \widetilde\delta(t)$ for all times $t \le T_R \land \tau_1$.

\smallskip

Now, for each $\tau=\tau_j$ ($j\ge 2$), we apply Lemma~\ref{lem.key-1}(i) with $R = R(\tau_j-1)$ and
$\omega = \psi \log R(\tau_{j-1})$ for each $j$.  We need to check that $\omega \le R(\tau_j-1)/J^2$, i.e.,
that $R(\tau_j-1)/ \log R(\tau_{j-1}) \ge \psi J^2$:
we have
$$
\frac{R(\tau_j-1)}{\log R(\tau_{j-1})} \ge \frac{R(\tau_{j-1})}{\log R(\tau_{j-1})} \ge
\frac{2\psi J^2 \log(\psi J^2)}{\log(2 \psi J^2 \log(\psi J^2))} > \frac{2 \psi J^2 \log(\psi J^2)}{2\log(\psi J^2)}
= \psi J^2,
$$
as required; here we used the facts that (i)~$R(\tau_j -1) \ge R(\tau_{j-1}) \ge 2 \psi J^2 \log(\psi J^2)$, by the definition of
the $\tau_j$, and $R(\cdot)$ is increasing, (ii)~$\psi J^2 \ge 4$ and $x/\log x$ is an increasing function,
with minimum value~$e$, for $x \ge e$ (so in particular $2\log(\psi J^2) < \psi J^2$).

We obtain:
$$
\P \left( T_g\left(\sqrt{\psi R(\tau_j-1) \log R(\tau_{j-1})}\right) \le T_R \land \tau_j \right)
\le 2 e^{-\psi \log R(\tau_{j-1})/4}.
$$
We have, for $\tau_{j-1} < t \le \tau_j$,
$$
\widetilde\delta(t) \ge \widetilde\delta(\tau_{j-1}) \ge 2 \sqrt{\psi R(\tau_{j-1}) \log R(\tau_{j-1})}
\ge \sqrt{\psi R(\tau_j -1) \log R(\tau_{j-1})}
$$
and so, with probability at least $1 -  2 e^{-\psi \log R(\tau_{j-1})/4}$,
$|M_t^g| \le \widetilde\delta(t)$ for all times $t$ with $\tau_{j-1} < t \le T_R \land \tau_j$.

Therefore, summing over $j$, and noting that $R(\tau_{j-1}) \ge 2 \psi J^2 \log (\psi J^2) 4^{j-2} > e^{j-1}$
for each $j \ge 2$,
\begin{eqnarray*}
\P \left( |M_t^g| > \widetilde\delta(t) \mbox{ for some } t \le T_R \right)
&\le& 2 e^{-\psi/4} + 2 \sum_{j=2}^\infty e^{-\psi(j-1)/4} \\
&=& 2 e^{-\psi/4} \left( 1 + \frac{1}{1-e^{-\psi/4}} \right)
\le 5 e^{-\psi/4},
\end{eqnarray*}
as desired.
\end{proof}

\section{Evolution of the degree of a vertex}   \label{Seve}

For the remainder of the paper, we will use the results of the previous section to analyse various aspects of
the preferential attachment process.

Our first, relatively simple, application is to the evolving degree of a single vertex; loosely, we prove that, if a vertex
has degree $k$ at time~$s$, its degree at a later time $t$ is unlikely to be far from $k \sqrt{t/s}$.  Results of a similar
flavour are in the literature already (see for instance~Cooper~\cite{c05}, Athreya, Ghosh and Sethuraman~\cite{AGS} and
Dereich and M\"orters~\cite{dm}); we
give them here partly to illustrate our methods and partly because we shall have need of the results from this section later on.

We assume as always that our process starts at some time $\tau_0$, with $\tau_0$ vertices and $\tau_0-1$ edges.  At each stage,
one new vertex and one new incident edge are created, so that at each time $s \ge \tau_0$ there are $s$ vertices and $s-1$ edges.
We identify the vertex set at time $\tau_0$ with the set $[\tau_0] = \{ 1, \dots, \tau_0\}$, and label the new vertex arriving
at each later time $s$ with $s$, so that the set of vertices present at time $t \ge \tau_0$ is exactly $[t]$.

For a vertex $v$, and $t \in \bbN$ with $t \ge \max(\tau_0,v)$, let $X_t(v)$ be the degree of vertex $v$ at time $t$.
For $\tau_0 \le t < v$, we set $X_t(v)=0$.

Set $(X_t) = (X_t(v): v=1,2, \ldots )$; it is easily seen that $(X_t)$ is a Markov process.  Indeed, each component
$X_t(v)$ is separately a Markov process.  For $v \in \bbN$, we take $f_v(x) = x(v)$,
the degree of vertex $v$ in state $x$; the function $f_v$ is the projection of the state onto its $v$-th component.
In what follows, we shall assume that $v \le \tau_0$, so that vertex $v$ is present in the graph at time $\tau_0$, and we let
$m_0 = X_{\tau_0}(v)$ be its degree at the initial time.

We now want to calculate the corresponding martingale from Lemma~\ref{lem.mart}. First we note that
\begin{eqnarray*}
\E [f_v (X_{t+1}) - f_v (X_t) \mid X_t = x]  &=&  [(P_t-I) f_v ] (x)\\
&=& \frac{x(v)}{2(t-1)}.
\end{eqnarray*}
This is because the sum of all vertex degrees at time $t$ is $2(t-1)$, and the probability that a vertex $w$ is chosen as
the endpoint of the new edge from vertex $t$ at time $t$, conditional on $X_t=x$, is proportional to its degree $x(w)$, and therefore
the conditional probability that vertex $v$ is chosen is $x(v)/2(t-1)$.

By Lemma~\ref{lem.mart}, we know that the process $M(v)$ given by
\begin{eqnarray*}
M_t(v) &=& f_v(X_t) -f_v(X_{\tau_0}) - \sum_{s=\tau_0}^{t-1}[(P_s -I) f_v](X_s) \\
&=& X_t(v) - m_0 - \sum_{s=\tau_0}^{t-1} \frac{X_s(v)}{2(s-1)}
\end{eqnarray*}
is a martingale.  We re-write the above as
\begin{equation} \label{Xtv}
X_t(v) = M_t (v) + m_0 + \sum_{s=\tau_0}^{t-1} \frac{X_s(v)}{2(s-1)}.
\end{equation}

\smallskip

Let $x_t$ solve the recurrence relation
\begin{eqnarray*}
x_{t+1} = x_t \left( 1 + \frac{1}{2(t-1)} \right),
\end{eqnarray*}
for $t \ge \tau_0$, with $x_{\tau_0}=m_0$.
A simple induction argument shows that, for all $t \ge \tau_0$,
$$
m_0 \sqrt{ \frac{t-1}{\tau_0-1}} \le x_t \le m_0 \sqrt{ \frac{t-2}{\tau_0-2}}.
$$
Provided $\tau_0 \ge 4$, we have
$$
\sqrt{ \frac{t-2}{\tau_0-2}}
\le \sqrt{ \frac{t-1}{\tau_0-1}}
\left(1+ \frac{1}{2(\tau_0-2)}\right) \le \frac54 \sqrt{\frac{t-1}{\tau_0-1}},
$$
and so
\begin{equation} \label{xt}
x_t \le \frac54 m_0 \sqrt{\frac{t-1}{\tau_0-1}}.
\end{equation}

\smallskip

Now fix any $\omega \ge 4$.
For a vertex $v$, we define the time
$$
T_v = \inf \left\{t \ge \tau_0: X_t(v) > 60 \omega^3 m_0 \sqrt{\frac{t-1}{\tau_0-1}} \right\}.
$$
Then for $\tau_0 \le t < T_v$, we have $X_t(v) \le 60 \omega^3 m_0\sqrt{\frac{t-1}{\tau_0 -1}}$.

Let $E_t = X_t (v) -x_t$, with $E_{\tau_0}=0$; we want to bound $|E_t|$.
Substituting $X_t(v) = x_t + E_t$ in~(\ref{Xtv}), and using the recurrence
\begin{eqnarray*}
x_t  =  x_{\tau_0} + \sum_{s=\tau_0}^{t-1} \frac{x_s}{2(s-1)},
\end{eqnarray*}
we obtain that, for $\tau_0 \le t$,
\begin{eqnarray*}
|E_t| \le |M_t(v)| + \sum_{s=\tau_0}^{t-1} \frac{|E_s|}{2(s-1)}.
\end{eqnarray*}

For $\tau_0 \le s < T_v$, $X_{s+1}(v) - X_s(v)$ is either~0 or~1, and the probability that
it is equal to~1, conditional on $X_s$, is
$$
\frac{X_s(v)}{2(s-1)} \le 30 \omega^3 m_0 \sqrt{\frac{1}{(s-1)(\tau_0 -1)}}.
$$
So, for $t < T_v$, we have
\begin{eqnarray*}
\Phi_t^{f_v}(X) &=& \sum_{s=\tau_0}^t \sum_{x'} P_s(X_s,x') \left( f_v(x')-f_v(X_s) \right)^2 \\
&\le& \sum_{s=\tau_0}^t  30 \omega^3 m_0 \sqrt{\frac{1}{(s-1)(\tau_0 -1)}} \\
&\le& 60 \omega^3 m_0 \sqrt{\frac{t}{\tau_0-1}}.
\end{eqnarray*}

\smallskip

We now apply Theorem~\ref{thm.sparse}(b) to the function $f_v$, with $R(s) = 60 \omega^3 m_0 \sqrt{\frac{s}{\tau_0-1}}$,
$J = 1$, and with $\psi = \omega$.  Thus we have
\begin{eqnarray*}
\widetilde\delta(t) &=& 2 \max\left(\omega \log \omega,
\sqrt{60\omega^4 m_0 \log\left(60 \omega^3 m_0 \sqrt{(t-1)/(\tau_0-1)}\right)} \left(\frac{t-1}{\tau_0-1}\right)^{1/4} \right) \\
&\le& 8 \omega^2 \sqrt{m_0} \Big ( \frac{t-1}{\tau_0-1} \Big )^{1/4} \sqrt{\log \left(60 \omega^3 m_0 \sqrt{(t-1)/(\tau_0-1)}\right)},
\end{eqnarray*}
for $\tau_0 \le t$.  The result implies that, with probability at least
$1- 5e^{-\omega/4}$, we have
$|M_t(v)| \le \widetilde\delta(t)$ for all $t$ with $\tau_0 \le t \le T_v$.

We thus obtain that, with probability at least $1-5e^{-\omega/4}$, for $\tau_0 \le t \le T_v$:
$$
|E_t| \le \sum_{s=\tau_0}^{t-1} \frac{|E_s|}{2(s-1)} +
8 \omega^2 \sqrt{m_0} \left( \frac{t-1}{\tau_0-1} \right)^{1/4} \sqrt{ \log \left(60 \omega^3 m_0 \sqrt{(t-1)/(\tau_0-1)}\right)}.
$$
Now $\log (xy) - y^{1/3}\log x$ is decreasing in $y$ for $y \ge 1$ when $x \ge 20$, and is zero at
$y=1$: we apply this with $x=60 \omega^3 m_0$ and $y = \sqrt{(t-1)/(\tau_0-1)}$, to obtain that
\begin{eqnarray*}
\log \left(60 \omega^3 m_0 \sqrt{(t-1)/(\tau_0-1)}\right) &\le& \left( \frac{t-1}{\tau_0-1} \right)^{1/6} \log (60 \omega^3 m_0) \\
&\le& \omega^2 \left( \frac{t-1}{\tau_0-1} \right)^{1/6} \log (2m_0),
\end{eqnarray*}
for any $m_0 \ge 1$, and so
$$
|E_t| \le \sum_{s=\tau_0}^{t-1} \frac{|E_s|}{2(s-1)} +
8 \omega^3 \sqrt{m_0 \log (2 m_0)} \left( \frac {t-1} {\tau_0-1} \right)^{1/3}.
$$

We analyse the recurrence above using the following simple lemma.

\begin{lemma}
Let $A$ be a positive constant, and $\tau_0$ a positive
integer.  Suppose the sequence $e_t$, for $t \ge \tau_0$, satisfies $e_{\tau_0} = 0$ and
$$
e_t \le \sum_{s=\tau_0}^{t-1} \frac{e_s}{2(s-1)} + A \left( \frac{t-1}{\tau_0-1} \right)^{1/3},
$$
for all $t > \tau_0$.
Then
$$
e_t \le e^*_t = 6 A \left[ \sqrt{\frac{t-1}{\tau_0-1}}
- \frac{2}{3} \left( \frac{t-1}{\tau_0-1} \right)^{1/3} \right ],
$$
for all $t \ge \tau_0$.
\end{lemma}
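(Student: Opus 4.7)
The plan is a straightforward induction on $t$, reducing the lemma to a purely analytic (discrete calculus) inequality which I will verify by comparing the discrete increments of $e_t^*$ with the corresponding derivatives of its continuous analogue.

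\textbf{Base case.} At $t=\tau_0$ we have $e_{\tau_0}=0$ and $e_{\tau_0}^*=6A(1-\tfrac{2}{3})=2A\ge 0$, so $e_{\tau_0}\le e_{\tau_0}^*$.

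\textbf{Inductive step.} Suppose $e_s\le e_s^*$ for all $\tau_0\le s<t$. Using the hypothesis and monotonicity of the weights $1/(2(s-1))$, I get
$$
e_t\;\le\;\sum_{s=\tau_0}^{t-1}\frac{e_s^*}{2(s-1)}+A\left(\frac{t-1}{\tau_0-1}\right)^{1/3},
$$
so it is enough to prove the purely numerical inequality
$$
\sum_{s=\tau_0}^{t-1}\frac{e_s^*}{2(s-1)}+A\left(\frac{t-1}{\tau_0-1}\right)^{1/3}\;\le\;e_t^*. \qquad(*)
$$

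\textbf{Reduction to a one-step inequality.} I would prove $(*)$ by a secondary induction on $t$. Writing $u_t=(t-1)/(\tau_0-1)$ and
$$
F_t\;=\;e_t^*-\sum_{s=\tau_0}^{t-1}\frac{e_s^*}{2(s-1)},
$$
I want $F_t\ge A u_t^{1/3}$. At $t=\tau_0$ this is $F_{\tau_0}=e_{\tau_0}^*=2A\ge A=A u_{\tau_0}^{1/3}$. The inductive step amounts to
$$
F_{t+1}-F_t\;=\;(e_{t+1}^*-e_t^*)-\frac{e_t^*}{2(t-1)}\;\ge\;A\bigl(u_{t+1}^{1/3}-u_t^{1/3}\bigr),
$$
i.e.\ I need the one-step inequality
$$
(e_{t+1}^*-e_t^*)-\frac{e_t^*}{2(t-1)}\;\ge\;A\bigl(u_{t+1}^{1/3}-u_t^{1/3}\bigr).\qquad(**)
$$

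\textbf{Verification of $(**)$.} This is where the choice of the constants $6A$ and $-\tfrac{2}{3}$ pays off. The continuous analogue of $e_t^*$, namely $y(u)=3Au^{1/2}-2Au^{1/3}$, is the exact solution of the ODE $y'=\tfrac{y}{2u}+\tfrac{A}{3}u^{-2/3}$ arising from the integral equation with equality; by doubling the leading coefficients to $6A$ and $-4A$ I build in slack precisely to absorb the discretisation error. Concretely, I would substitute $e_t^*=6A u_t^{1/2}-4A u_t^{1/3}$ and expand both sides of $(**)$, using the concavity estimates
$$
\sqrt{u_{t+1}}-\sqrt{u_t}\;\ge\;\frac{1}{2(\tau_0-1)\sqrt{u_{t+1}}},\qquad u_{t+1}^{1/3}-u_t^{1/3}\;\le\;\frac{1}{3(\tau_0-1)u_t^{2/3}},
$$
and the identity $u_t=(t-1)/(\tau_0-1)$ so that $\tfrac{1}{2(t-1)}=\tfrac{1}{2(\tau_0-1)u_t}$. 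After collecting terms this becomes a pair of elementary inequalities comparing $u_t^{-1/2}$ and $u_t^{-2/3}$ terms, both of which hold for $u_t\ge 1$ (i.e.\ for $t\ge\tau_0$) thanks to the slack factor of $2$.

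\textbf{Main obstacle.} The real work is the bookkeeping in step $(**)$: the discrete differences $e_{t+1}^*-e_t^*$ are not given in closed form, and I have to keep the concavity/convexity errors for $u\mapsto u^{1/2}$ and $u\mapsto u^{1/3}$ on the correct sides of each inequality. Once these one-variable estimates are in hand, the induction closes cleanly and yields $e_t\le e_t^*$ for every $t\ge\tau_0$.
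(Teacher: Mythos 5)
Your overall skeleton is sound and genuinely different from the paper's. The paper proves the deterministic inequality $(*)$ in one stroke: it notes the summand $e^*_s/2(s-1)$ is a decreasing function of $s$, bounds the sum by the exact antiderivative $6A\big(u^{1/2}-u^{1/3}\big)$ plus a single additive error $A/(\tau_0-1)$, and absorbs that error into the slack $5Au_t^{1/3}-4Au_t^{1/3}$ coming from the choice of constants. You instead telescope a one-step ``discrete derivative'' inequality $(**)$. Your reduction of the lemma to $(**)$ (main induction, then telescoping $F_t$) is correct, and note that $(*)$ is a purely deterministic statement about $e^*$, so no ``secondary induction inside the induction'' is really needed; also, passing from $e_s$ to $e^*_s$ in the sum uses only positivity, not monotonicity, of the weights.

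However, the verification of $(**)$ as you describe it does not close. Writing $h=1/(\tau_0-1)$ and $u=u_t$, your two concavity bounds reduce $(**)$ to
$$
\frac{1}{3u^{2/3}} \;\ge\; 3\left(\frac{1}{\sqrt u}-\frac{1}{\sqrt{u+h}}\right),
$$
and this fails at the very first step $u=1$ unless $h\le 17/64$, i.e.\ unless $\tau_0\ge 5$: for $\tau_0=4$ (the smallest value the paper actually uses) the right-hand side is $3(1-\sqrt3/2)\approx 0.402>1/3$, and for $\tau_0=2,3$ (allowed by the lemma as stated) it is worse. So the claim that the resulting elementary inequalities ``hold for $u_t\ge 1$'' is not correct for small $\tau_0$. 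The target inequality $(**)$ itself is true for all $u\ge1$ and $h\le1$ (at $u=1$, $h=1/3$ the two sides differ only by about $0.09A$), so your route is salvageable, but it needs more than the stated first-order bounds: for instance the second-order bound $\sqrt{u+h}-\sqrt u\ge \frac{h}{2\sqrt u}-\frac{h^2}{8u^{3/2}}$ reduces $(**)$ to $u^{5/6}\ge 9h/4$, which covers $\tau_0\ge4$, and the remaining cases $\tau_0=2,3$ (or the first few steps) must be checked directly or absorbed using the base-case slack $F_{\tau_0}=2A$ against the required $A$. The paper's sum-versus-integral comparison sidesteps exactly this edge-case bookkeeping and works uniformly in $\tau_0$.
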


Of course, the conclusion that we shall use is that
$e_t < 6 A \sqrt{\frac{t-1}{\tau_0 -1}}$, but the bound above is easier to
establish by induction.

\begin{proof}
The proof is by induction on $t$, the result being true with something to
spare for $t=\tau_0$.

Suppose the result is true for all $s$ with $\tau_0 \le s < t$.  Then, by the
induction hypothesis and the recursive bound, we have:
\begin{eqnarray*}
e_t &\le& \sum_{s=\tau_0}^{t-1} \frac{e^*_s}{2(s-1)}
+ A \left( \frac{t-1}{\tau_0-1} \right)^{1/3} \\
&\le& 3 A \sum_{s=\tau_0}^{t-1} \frac{1}{s-1}
\left[ \sqrt{\frac{s-1}{\tau_0-1}} -
\frac{2}{3} \left( \frac{s-1}{\tau_0-1} \right)^{1/3} \right ]
+ A \left( \frac{t-1}{\tau_0-1} \right)^{1/3}.
\end{eqnarray*}

Now the function
$g(s) = \frac{1}{s-1} \left[ \sqrt{\frac{s-1}{\tau_0-1}} - \frac{2}{3} \left(\frac{s-1}{\tau_0-1}\right)^{1/3} \right]$
is decreasing for all $s > \tau_0$, so $g(s) \le 1/3\tau_0$ for all
$s \ge \tau_0$, and we have
\begin{eqnarray*}
\sum_{s=\tau_0}^{t-1} g(s) &\le& \int_{s=\tau_0}^t g(s) \, ds + \frac{1}{3(\tau_0-1)} \\
&=& \frac{1}{3(\tau_0-1)} + \left[ 2 \left(\frac{s-1}{\tau_0-1}\right)^{1/2}
- 2 \left(\frac{s-1}{\tau_0-1}\right)^{1/3} \right]_{\tau_0}^t \\
&=& \frac{1}{3(\tau_0-1)} + 2 \left[ \left( \frac{t-1}{\tau_0-1} \right)^{1/2}
- \left( \frac{t-1}{\tau_0-1} \right)^{1/3} \right ].
\end{eqnarray*}

This gives
\begin{eqnarray*}
e_t &\le& \frac{A}{(\tau_0-1)} + 6 A \left( \frac{t-1}{\tau_0-1} \right)^{1/2}
- 5 A \left( \frac{t-1}{\tau_0-1} \right)^{1/3} \\
&\le& 6 A \left( \frac{t-1}{\tau_0-1} \right)^{1/2}
- 4 A \left( \frac{t-1}{\tau_0-1} \right)^{1/3}.
\end{eqnarray*}
This is the desired inequality for $e_t$.
\end{proof}

We can now deduce that, with probability at least $1-5e^{-\omega/4}$, for $\tau_0 \le t \le T_v$,
\begin{equation} \label{et}
|E_t| < 48 \omega^3 \sqrt {m_0  \log (2m_0)} \sqrt{\frac{t-1}{\tau_0 -1}},
\end{equation}
and this bound is at most $\displaystyle 48\omega^3 m_0 \sqrt{\frac{t-1}{\tau_0 -1}}$.
Combined with (\ref{xt}), this implies that $\displaystyle X_t(v) \le 50 \omega^3 m_0\sqrt{\frac{t-1}{\tau_0 -1}}$ for all times $t \le T_v$.
We deduce that $T_v = \infty$, since otherwise this would contradict the definition of $T_v$.
This means that, with probability at least $1-5e^{-\omega/4}$, the bound (\ref{et}) is valid for all times $t\ge \tau_0$.

We thus have the following theorem.

\begin{theorem}\label{thm:early-degrees}
For all $\omega \ge 4$, $\tau_0 \ge 4$, and $m_0 \ge 1$, we have
$$
\P \left( |X_t(v) - x_t| < 48 \omega^3 \sqrt {m_0 \log (2m_0)} \sqrt{\frac{t-1}{\tau_0-1}} \mbox{ for all $t \ge \tau_0$}
\,\Big|\, X_{\tau_0}(v) = m_0 \right)
$$
$$
\ge 1 - e^{-5\omega/4},
$$
and therefore
$$
\P \left( X_t(v) \le 50 \omega^3 m_0 \sqrt{\frac{t-1}{\tau_0-1}} \mbox{ for all $t \ge \tau_0$}
\,\Big|\, X_{\tau_0}(v) = m_0 \right)
$$
$$
\ge 1 - e^{-5\omega/4}.
$$
\end{theorem}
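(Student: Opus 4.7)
The plan is to apply Theorem~\ref{thm.sparse}(b) to the projection $f_v(x)=x(v)$, then unwind the resulting martingale bound through a linear recurrence to pass from $|M_t(v)|$ to $|X_t(v)-x_t|$. First I would set up the Doob decomposition from Lemma~\ref{lem.mart}: since conditional on $X_s$ the increment $X_{s+1}(v)-X_s(v)$ is Bernoulli with parameter $X_s(v)/(2(s-1))$, we have
$$
X_t(v) = m_0 + \sum_{s=\tau_0}^{t-1} \frac{X_s(v)}{2(s-1)} + M_t(v),
$$
with $M_t(v)$ a martingale.  Introduce the deterministic solution $x_t$ of the same recurrence with $M$ removed; by a direct induction, $x_t$ lies between $m_0\sqrt{(t-1)/(\tau_0-1)}$ and $\tfrac{5}{4}m_0\sqrt{(t-1)/(\tau_0-1)}$ for $\tau_0\ge 4$.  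Subtracting and using the triangle inequality gives the key linear recurrence
$$
|E_t| \le |M_t(v)| + \sum_{s=\tau_0}^{t-1} \frac{|E_s|}{2(s-1)}
$$
for $E_t=X_t(v)-x_t$.

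Next I would bound $|M_t(v)|$ using Theorem~\ref{thm.sparse}(b).  The obvious difficulty is that the predictable quadratic variation $\Phi^{f_v}_t(X)$ involves $X_s(v)/(2(s-1))$, i.e.\ the very thing we are trying to control.  The standard remedy is to truncate: introduce the stopping time
$$
T_v = \inf\Bigl\{ t\ge \tau_0 : X_t(v) > 60\omega^3 m_0\sqrt{(t-1)/(\tau_0-1)}\Bigr\},
$$
so that for $t<T_v$ the quadratic variation is bounded by a function $R(t)$ of order $\omega^3 m_0\sqrt{t/(\tau_0-1)}$.  Theorem~\ref{thm.sparse}(b) with $J=1$ and $\psi=\omega$ then yields, with probability at least $1-5e^{-\omega/4}$,
$$
|M_t(v)| \le 8\omega^2\sqrt{m_0}\Bigl(\tfrac{t-1}{\tau_0-1}\Bigr)^{1/4}\sqrt{\log\bigl(60\omega^3 m_0\sqrt{(t-1)/(\tau_0-1)}\bigr)}
$$
for all $t\le T_v$.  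A crude estimate of the form $\log(xy)\le y^{1/6}\log x$ absorbs the $t$-dependence inside the square root into a $\bigl((t-1)/(\tau_0-1)\bigr)^{1/6}$ factor, collapsing the above into a bound of the form $A\bigl((t-1)/(\tau_0-1)\bigr)^{1/3}$ with $A=O(\omega^3\sqrt{m_0\log(2m_0)})$.

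At this stage I would feed this bound into the recurrence for $|E_t|$.  A short calculus lemma (bounding $\sum_{s} \tfrac{1}{s-1}\sqrt{(s-1)/(\tau_0-1)}$ by an integral) shows that any sequence satisfying $e_t\le \sum_s e_s/(2(s-1))+A((t-1)/(\tau_0-1))^{1/3}$ with $e_{\tau_0}=0$ must obey $e_t\le 6A\sqrt{(t-1)/(\tau_0-1)}$; the bound is easiest to prove by strengthening the inductive hypothesis to include the subtracted $((t-1)/(\tau_0-1))^{1/3}$ correction.

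Finally I close the bootstrap.  On the high-probability event the recurrence lemma yields, for $\tau_0\le t\le T_v$,
$$
|E_t| < 48\omega^3\sqrt{m_0\log(2m_0)}\,\sqrt{(t-1)/(\tau_0-1)},
$$
and combined with the deterministic bound on $x_t$ this gives $X_t(v)\le 50\omega^3 m_0\sqrt{(t-1)/(\tau_0-1)}$ throughout $[\tau_0,T_v]$, strictly below the threshold defining $T_v$.  Hence $T_v=\infty$ on this event, so the bound is valid for \emph{all} $t\ge\tau_0$.  The main obstacle is the self-referential nature of the quadratic variation estimate: the martingale concentration needs an a~priori upper bound on $X_t(v)$, which is precisely what we intend to prove.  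The stopping-time device resolves this, but only if the constants in the martingale bound and in the recurrence lemma are chosen tightly enough that the final estimate $50\omega^3 m_0\sqrt{(t-1)/(\tau_0-1)}$ beats the looser $60\omega^3 m_0\sqrt{(t-1)/(\tau_0-1)}$ used to define $T_v$; managing these constants (and the logarithmic factors from Theorem~\ref{thm.sparse}(b)) is the most delicate part of the argument.
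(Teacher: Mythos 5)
Your proposal is correct and follows essentially the same route as the paper: the same Doob decomposition, the same deterministic comparison sequence $x_t$, the same stopping time $T_v$ at level $60\omega^3 m_0\sqrt{(t-1)/(\tau_0-1)}$, the application of Theorem~\ref{thm.sparse}(b) with $J=1$ and $\psi=\omega$, the same logarithm-absorption trick and recurrence lemma giving $e_t\le 6A\sqrt{(t-1)/(\tau_0-1)}$, and the same bootstrap showing $50\omega^3 m_0 < 60\omega^3 m_0$ forces $T_v=\infty$. No gaps to report.
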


We note two consequences of the result above that we shall use later.

\begin{corollary} \label{cor:2times}
For $\tau_0 \ge 4$, $\omega \ge 4$ and $m_0 \ge 10^5 \omega^7$, we have
$$
\P \left( X_t(v) \le 2 m_0 \sqrt{\frac{t-1}{\tau_0-1}} \mbox{ for all $t \ge \tau_0$} \,\Big|\, X_{\tau_0}(v) = m_0 \right)
\ge 1 - e^{-5\omega/4}.
$$
\end{corollary}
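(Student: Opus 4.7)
The plan is to deduce this corollary immediately from Theorem~\ref{thm:early-degrees}, exploiting the fact that when $m_0 \ge 10^5 \omega^7$ the fluctuation term in that theorem is dominated by a small constant fraction of $m_0$.

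First I would invoke Theorem~\ref{thm:early-degrees} to obtain that, with conditional probability at least $1 - e^{-5\omega/4}$ given $X_{\tau_0}(v) = m_0$, the simultaneous bound
$$
|X_t(v) - x_t| < 48 \omega^3 \sqrt{m_0 \log(2m_0)} \sqrt{\frac{t-1}{\tau_0-1}}
$$
holds for all $t \ge \tau_0$. Combining this with the deterministic inequality $x_t \le \tfrac{5}{4} m_0 \sqrt{(t-1)/(\tau_0-1)}$ recorded in~(\ref{xt}), I conclude that, on the same good event and for every $t \ge \tau_0$,
$$
X_t(v) \le \left( \tfrac{5}{4} m_0 + 48 \omega^3 \sqrt{m_0 \log(2m_0)} \right) \sqrt{\frac{t-1}{\tau_0-1}}.
$$
It therefore suffices to show that $48 \omega^3 \sqrt{m_0 \log(2m_0)} \le \tfrac{3}{4} m_0$, or equivalently (after squaring) $m_0 \ge 4096\, \omega^6 \log(2m_0)$.

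The remaining task is an elementary numerical verification: the hypothesis $m_0 \ge 10^5 \omega^7$ yields $\log(2m_0) \le \log(2 \cdot 10^5) + 7 \log \omega \le 13 + 7 \log \omega$, so the required inequality reduces to $10^5 \omega \ge 4096 (13 + 7 \log \omega)$. This holds for every $\omega \ge 4$, since the left-hand side grows linearly in $\omega$ while the right-hand side grows only logarithmically, and at the base case $\omega = 4$ one has $4 \times 10^5 \ge 4096 \cdot 22.7 \approx 9.3 \times 10^4$. There is no conceptual obstacle here; the constant $10^5$ and the exponent $7$ on $\omega$ in the hypothesis are sized precisely to absorb the $\omega^3 \sqrt{\log m_0}$ factor coming from Theorem~\ref{thm:early-degrees}.
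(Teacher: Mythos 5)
Your proposal follows the paper's own proof: invoke Theorem~\ref{thm:early-degrees}, combine it with the bound $x_t \le \frac54 m_0\sqrt{(t-1)/(\tau_0-1)}$ from~(\ref{xt}), and reduce the corollary to the inequality $48\,\omega^3\sqrt{m_0\log(2m_0)}\le \frac34 m_0$, equivalently $m_0 \ge 4096\,\omega^6\log(2m_0)$. The one flaw is in your final verification: from the hypothesis $m_0 \ge 10^5\omega^7$ you cannot deduce $\log(2m_0)\le \log(2\cdot 10^5)+7\log\omega$ --- that inference would require an \emph{upper} bound on $m_0$, whereas $m_0$ may be arbitrarily large, in which case $\log(2m_0)$ exceeds $13+7\log\omega$. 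The step is easily repaired: since $m\mapsto m/\log(2m)$ is increasing (for $m\ge 2$, say), it suffices to verify $m_0\ge 4096\,\omega^6\log(2m_0)$ at the extremal value $m_0=10^5\omega^7$, at which point your numerical check $10^5\omega\ge 4096(13+7\log\omega)$ for $\omega\ge 4$ is valid and the conclusion follows for all larger $m_0$. This monotonicity reduction is precisely how the paper closes the argument, via the chain $\frac{m_0}{\log(2m_0)}\ge \frac{10^5\omega^7}{\log(2\times 10^5\omega^7)}\ge 2^{12}\omega^6$.
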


\begin{proof}
By (\ref{xt}), we have $x_t \le \frac{5}{4} m_0 \sqrt{\frac{t-1}{\tau_0-1}}$ for all $t \ge \tau_0$, given the initial condition $x_{\tau_0} = m_0$.

The result will then follow from Theorem~\ref{thm:early-degrees} as long as
$$
48 \omega^3 \sqrt {\frac{\log (2m_0)}{m_0}} \le \frac34.
$$
We see that
$$
\frac{m_0}{\log(2m_0)} \ge \frac{10^5 \omega^7}{\log(2\times 10^5 \omega^7} \ge \omega^6 \frac{10^5 \times 4}{\log(2 \times 10^5 \times 4^7)}
\ge \omega^6 2^{12},
$$
which implies the desired inequality.
\end{proof}

We shall also use the following result, stating that the maximum degree at time $t$ is unlikely to be larger than
$\psi \sqrt{t-1}$, where $\psi$ is a large constant.

\begin{theorem} \label{maxdegree}
Let $\tau_0 \ge 4$ and $\psi \ge 10^5 \sqrt{\tau_0-1} \log^3\tau_0$ be constants.
For the preferential attachment model, with any initial graph on $\tau_0$ vertices
and $\tau_0-1$ edges,
$$
\P\left( X_t(v) > \psi \sqrt{t-1} \mbox{ for some vertex $v$ and some $t \ge \tau_0$}\right)
$$
$$
\le 2 \tau_0 \exp\left( - \frac{\psi^{1/3}}{3(\tau_0-1)^{1/6}} \right) \le \frac{1}{\psi}.
$$
\end{theorem}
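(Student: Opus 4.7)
The plan is to bound $X_t(v)$ separately for each vertex $v$ using Theorem~\ref{thm:early-degrees} and take a union bound. Two cases arise: vertices $v \in [\tau_0]$ already present in $G(\tau_0)$, and vertices $v$ created at some later time $s > \tau_0$.

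For each vertex $v \in [\tau_0]$ with $m_0(v) := X_{\tau_0}(v) \ge 1$: since the degree sum of $G(\tau_0)$ is $2(\tau_0-1)$ we have $m_0(v) \le \tau_0 - 1$. I would apply Theorem~\ref{thm:early-degrees} with $\omega = (\psi/(50\sqrt{\tau_0-1}))^{1/3}$, noting that the hypothesis on $\psi$ easily guarantees $\omega \ge 4$. The theorem's bound $50 \omega^3 m_0(v)\sqrt{(t-1)/(\tau_0-1)}$ is then at most $\psi\sqrt{t-1}$ for all $t \ge \tau_0$, with failure probability at most $e^{-5\omega/4}$. Since $5/(4\cdot 50^{1/3}) > 1/3$, this is at most $\exp(-\psi^{1/3}/(3(\tau_0-1)^{1/6}))$. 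Vertices with $m_0(v)=0$ never acquire an edge (degree~$0$ is ignored by preferential attachment) and satisfy the bound trivially.

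For a vertex $v$ arriving at some time $s > \tau_0$: conditional on $\mathcal{F}_s$, the process $(X_t(v))_{t \ge s}$ is governed by preferential attachment with $X_s(v)=1$, and the martingale argument proving Theorem~\ref{thm:early-degrees} goes through verbatim with $\tau_0$ replaced by $s$ and $m_0=1$. Choosing $\omega_s = (\psi\sqrt{s-1}/50)^{1/3}$ (again $\ge 4$) yields $50\omega_s^3 \sqrt{(t-1)/(s-1)} \le \psi\sqrt{t-1}$ for all $t \ge s$, with failure probability at most $\exp(-\psi^{1/3}(s-1)^{1/6}/3)$.

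Union-bounding, the total failure probability is at most
\[
\tau_0\, e^{-\psi^{1/3}/(3(\tau_0-1)^{1/6})} + \sum_{s=\tau_0+1}^{\infty} e^{-\psi^{1/3}(s-1)^{1/6}/3}.
\]
A standard integral bound shows the tail sum is no larger than the leading term, which accounts for the factor $2\tau_0$ in the stated bound. Finally, $\psi \ge 10^5\sqrt{\tau_0-1}\log^3\tau_0$ gives $\psi^{1/3}/(3(\tau_0-1)^{1/6}) \ge 10^{5/3}\log\tau_0/3 > 15\log\tau_0$, so $2\tau_0\, e^{-\psi^{1/3}/(3(\tau_0-1)^{1/6})} \le 2\tau_0^{-14}$, which is well below $1/\psi$. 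The main bit of work is the tail-sum estimate; the key observation is that since the exponent scales like $\psi^{1/3}(s-1)^{1/6}$ and $\psi^{1/3}$ is already a large multiple of $\log\tau_0$, the sum converges geometrically once $s$ exceeds a few multiples of $\tau_0$ and is easily absorbed into the constant factor $2\tau_0$.
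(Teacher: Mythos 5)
Your overall strategy is exactly the paper's: split into vertices present at time $\tau_0$ and vertices born at later times $s$, apply Theorem~\ref{thm:early-degrees} with $\omega = (\psi/50)^{1/3}(\tau_0-1)^{-1/6}$ in the first case and $\omega = (\psi\sqrt{s-1}/50)^{1/3}$, $m_0=1$, in the second, and then union bound, absorbing the tail sum over $s$ (which is at most $2e^{-\psi^{1/3}/3}$, hence smaller than the first contribution) into the factor $2\tau_0$. All of that is sound, and your check $5/(4\cdot 50^{1/3}) > 1/3$ is the right justification for the exponent $\psi^{1/3}/(3(\tau_0-1)^{1/6})$.

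The genuine problem is the very last step, the deduction that the bound is at most $1/\psi$. You bound $2\tau_0\exp\left(-\psi^{1/3}/(3(\tau_0-1)^{1/6})\right) \le 2\tau_0^{-14}$ and declare this ``well below $1/\psi$''. But $2\tau_0^{-14}$ is a fixed number once $\tau_0$ is fixed, whereas the theorem allows $\psi$ to be arbitrarily large (that is the entire purpose of the parameter: large $\psi$ makes the failure probability small), and $1/\psi \to 0$; e.g.\ with $\tau_0=4$ and $\psi = 10^{100}$ the inequality $2\tau_0^{-14}\le 1/\psi$ is false. Any bound on the failure probability that does not retain its decay in $\psi$ cannot beat $1/\psi$ uniformly. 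What is actually needed is the inequality $\psi^{1/3} \ge 3(\tau_0-1)^{1/6}\log(2\tau_0\psi)$ for \emph{all} $\psi \ge 10^5\sqrt{\tau_0-1}\log^3\tau_0$, i.e.\ that the exponent beats $\log(2\tau_0\psi)$ and not merely $\log\tau_0$. The paper handles this by setting $f(\psi)=\psi^{1/3}-3(\tau_0-1)^{1/6}\log(2\tau_0\psi)$, noting $f'(\psi)=\frac{1}{3\psi}\left[\psi^{1/3}-9(\tau_0-1)^{1/6}\right]>0$ in the admissible range, and checking $f\ge 0$ at the left endpoint $\psi = 10^5\sqrt{\tau_0-1}\log^3\tau_0$ (where $2\psi\le\tau_0^{12}$ and $10^{5/3}>39$). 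Your estimate ``exponent $\ge 15\log\tau_0$'' is essentially the endpoint check, but it must be supplemented by such a monotonicity (or a direct comparison of $\psi^{1/3}$ with $\log\psi$) to cover all $\psi$; with that repair your proof matches the paper's.
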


\begin{proof}
Let $P_1$ be the probability that $X_t(v) \ge \psi \sqrt{t-1}$ for some $t \ge \tau_0$ and some vertex $v$
already present at time $\tau_0$, and $P_2$ be the probability that $X_t(v) \ge \psi \sqrt{t-1}$ for some
$t \ge \tau_0$ and some vertex $v$ arriving at a time later than $\tau_0$.

We begin by bounding $P_1$.
For a fixed vertex $v$ present at time $\tau_0$, its degree at that time is certainly at most $\tau_0-1$.
We apply Theorem~\ref{thm:early-degrees} with $m_0 = \tau_0 -1$, and $\omega = (\psi/50)^{1/3} (\tau_0-1)^{-1/6} \ge 4$,
so that
$$
50 \omega^3 m_0 \sqrt{\frac{t-1}{\tau_0-1}} \le 50 \omega^3 \sqrt{\tau_0-1} \sqrt{t-1}= \psi \sqrt{t-1}.
$$
We obtain that
$$
\P \left( X_t(v) > \psi \sqrt{t-1} \mbox{ for some $t \ge \tau_0$}\right) \le e^{-5\omega/4}
\le \exp\left( - \frac{\psi^{1/3}}{3(\tau_0-1)^{1/6}} \right).
$$
We therefore have
$$
P_1 \le \tau_0 \exp\left( - \frac{\psi^{1/3}}{3(\tau_0-1)^{1/6}} \right)
$$

\smallskip

We now bound $P_2$.
For each time $s > \tau_0$, consider the new vertex $v$ of degree~1 born at time~$s$.  We apply Theorem~\ref{thm:early-degrees} to
this vertex, with $m_0 =1$, $\tau_0$ replaced by $s$, and $\omega = \left( \psi \sqrt{s-1}/50 \right)^{1/3}$, so
$50 \omega^3 m_0 \sqrt{\frac{t-1}{s-1}} \le \psi \sqrt{t-1}$, and we have
$$
\P \left( X_t(v) > \psi \sqrt{t-1} \mbox{ for some $t \ge s$}\right) \le e^{-5\omega/4}
\le \exp\left( - \frac{\psi^{1/3} (s-1)^{1/6}}{3}\right).
$$
Summing over $s$, we have
$$
P_2 \le \sum_{s=2}^\infty \exp\left( - \frac{\psi^{1/3} (s-1)^{1/6}}{3}\right) \le
2 \exp\left( - \psi^{1/3} / 3 \right).
$$

\smallskip

The overall probability that there is, at any time $t$, a vertex of degree at least $\psi \sqrt{t-1}$, is thus
at most $P_1 + P_2 \le 2P_1$, as claimed.

For the final inequality, we need to show that our bounds imply that
$$
f(\psi) = \psi^{1/3} - 3 (\tau_0-1)^{1/6} \log (2 \tau_0 \psi) \ge 0.
$$
We note that $f'(\psi) = \frac{1}{3\psi} \left[ \psi^{1/3} - 9 (\tau_0-1)^{1/6} \right] > 0$, so it is enough
to verify that the inequality holds at $\psi = 10^5 \sqrt{\tau_0-1} \log^3\tau_0$, at which point $2\psi \le \tau_0^{12}$ for all
$\tau_0 \ge 4$.  The desired inequality is equivalent to $10^{5/3} \log (\tau_0) \ge 3 \log (2 \tau_0 \psi)$: this holds since $10^{5/3} > 39$.
\end{proof}

\section{Concentration for $D_t(\ell)$} \label{Ssimple}

In this section, we again consider the basic preferential attachment model, but now we are concerned with the number of vertices
of degree exactly $\ell$ at time $t$.

Recall that, for $t \ge \tau_0$ and $\ell \in \N$, $D_t(\ell)$ denotes the number of vertices of degree exactly $\ell$ at time $t$.
It is easy to see that $D = (D_t (\ell): t \ge \tau_0, \ell \in \N)$ is a Markov chain.

We recall our main theorem.

\begin{reptheorem} {thm:main}
Let $\tau_0 \ge 4$ and $\psi \ge 10^5 \sqrt{\tau_0-1} \log^3\tau_0$ be constants.
Let $G(\tau_0)$ be any graph with $\tau_0$ vertices and $\tau_0 -1$ edges, and consider the
preferential attachment process with initial graph $G(\tau_0)$ at time $\tau_0$, and the associated Markov chain
$D = (D_t (\ell): t \ge \tau_0, \ell \in \N)$.

With probability at least $1 - \frac{4}{\psi}$, we have
$$
\left| D_t(\ell) - \frac{4t}{\ell(\ell+1)(\ell+2)} \right| \le 120 \sqrt{\frac{t \log (\psi t)}{\ell^3}} + 301 \psi^2 \log(\psi t),
$$
for all $\ell \ge 1$, and all $t \ge \tau_0$.
\end{reptheorem}

As is well-known (and as we shall show shortly), the expectation of $D_t(\ell)$ is very close to $4t/\ell(\ell+1)(\ell+2)$, for all $\ell \ge 1$
and all $t \ge \tau_0$, so the theorem shows concentration of measure of these random variables about their means.

For $\ell \le (t / \log (\psi t))^{1/3}/\psi^2$, the bound on the deviation of $D_t(\ell)$ is at most
$\displaystyle 125 \sqrt{\frac{t \log (\psi t)}{\ell^3}}$, which is, up to the log factor, on the order of $\sqrt{\E D_t(\ell)}$.
We get concentration within a factor $(1+o(1))$ of the mean as long as $\ell = o(t / \log t)^{1/3}$.

\smallskip

For all values of $\ell$ larger than $(t / \log t)^{1/3}$, the bound on the deviation that we obtain is of
order $\log t$.  This result might conceivably be of interest for values of $\ell$ between about $(t / \log t)^{1/3}$ and $t^{1/2}$,
but for larger values of $\ell$ we already have a stronger result: Theorem~\ref{maxdegree} tells us that
$D_t(\ell)=0$ when $\ell$ is larger than $\psi \sqrt t$, with probability at least $1-1/\psi$.

\smallskip

The proof of Theorem~\ref{thm:main} takes up the rest of this section, although we defer the bulk of the
calculations until later sections.

\begin{proof}
It will shortly turn out to be convenient to truncate the range of $\ell$, so that we consider only values of $\ell$ with
$1 \le \ell \le \ell_0$, for some fixed $\ell_0$.  We remark now that we may freely do this, as we are proving an explicit bound
on the probability of failure that is independent of $\ell_0$.

\smallskip

For the moment though, we consider all values $\ell \in \N$ simultaneously, and consider the evolution of the entire process
$D = (D_t(\ell))$ for $t \ge \tau_0$.

We have, for $t \ge \tau_0+1$,
\begin{eqnarray*}
\E [D_t(1)-D_{t-1} (1) \mid D_{t-1}] = 1 - \frac{D_{t-1} (1)}{2(t-1)}.
\end{eqnarray*}
Also, for $\ell \ge 2$,
\begin{eqnarray*}
\E [D_t (\ell) - D_{t-1} (\ell ) \mid D_{t-1}] = \frac{(\ell-1) D_{t-1} (\ell-1)}{2(t-1)} - \frac{\ell  D_{t-1} (\ell)}{2(t-1)}.
\end{eqnarray*}
Then, by Lemma~\ref{lem.mart},
\begin{eqnarray*}
D_t (1) & = & D_{\tau_0} (1) + \sum_{s=\tau_0}^{t-1} \Big ( 1 - \frac{D_s(1)}{2s} \Big ) + M_t (1)\\
D_t (\ell) & = & D_{\tau_0} (\ell) + \sum_{s = \tau_0}^{t-1} \Big ( \frac{(\ell-1)D_s (\ell-1)}{2s} - \frac{\ell D_s (\ell)}{2s} \Big )
+ M_t (\ell), \quad (\ell \ge 2),
\end{eqnarray*}
where $M_t (\ell)$ is a martingale for each $\ell \ge 1$.

We want to show that, for $\ell \ge 1$, $D_t (\ell)$ is close to $d_t (\ell)$, where the $d_t(\ell)$ satisfy
$d_{\tau_0}(\ell) = D_{\tau_0}(\ell)$ for all $\ell$, and:
\begin{eqnarray*}
d_t (1) & = & d_{\tau_0} (1) + \sum_{s=\tau_0}^{t-1} \Big ( 1 - \frac{d_s(1)}{2s} \Big )\\
d_t (\ell) & = & d_{\tau_0} (\ell) + \sum_{s = \tau_0}^{t-1}
\Big ( \frac{(\ell-1)d_s (\ell-1)}{2s} - \frac{\ell d_s (\ell)}{2s} \Big ), \quad (\ell \ge 2).
\end{eqnarray*}
Given the initial values $d_{\tau_0}(\ell) = D_{\tau_0}(\ell)$, for $\ell \ge 1$, the equations above are equivalent to:
\begin{eqnarray*}
d_t (1) & = & 1 + d_{t-1}(1) \Big ( 1 - \frac{1}{2(t-1)} \Big )\\
d_t (\ell) & = & d_{t-1} (\ell) \Big ( 1- \frac{\ell}{2(t-1)} \Big ) + \frac{\ell-1}{2(t-1)} d_{t-1} (\ell-1), \quad (\ell \ge 2).
\end{eqnarray*}
These equations are known to admit the explicit solution
$$
d_t(\ell) = \frac{4t}{\ell (\ell+1)(\ell+2)},
$$
if the initial conditions correspond (which of course cannot happen for a concrete graph at time $\tau_0$, since then
all the $D_{\tau_0}(\ell)$ are natural numbers).
More generally, we have the following result, which is very similar to results of Szyma\'nski~\cite{Szy1,Szy2} and
Bollob\'as, Riordan, Spencer and Tusn\'ady~\cite{brst01}.

\begin{lemma} \label{lem:dt}
Take any $\tau_0 \ge 1$, and any sequence $(D_{\tau_0}(\ell))_{\ell \ge 1}$ of non-negative integers with
$\sum_{\ell \ge 1} D_{\tau_0}(\ell) = \tau_0$.  Then the solution $d_t(\ell)$ of the equations above
with $d_{\tau_0}(\ell) = D_{\tau_0}(\ell)$ for all $\ell$ satisfies
$$
\left| d_t(\ell) - \frac {4t}{\ell(\ell+1)(\ell+2)} \right| \le \frac{\tau_0^{3/2}}{t^{1/2}} \le \tau_0,
$$
for all $\ell \ge 1$ and all $t \ge \tau_0$.
\end{lemma}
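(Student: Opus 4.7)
The plan is to define the error $e_t(\ell) := d_t(\ell) - 4t/(\ell(\ell+1)(\ell+2))$ and prove by induction on $t$ that $|e_t(\ell)| \le \tau_0^{3/2}/\sqrt{t}$ for all $\ell \ge 1$ and all $t \ge \tau_0$; the second inequality $\tau_0^{3/2}/\sqrt{t} \le \tau_0$ is then immediate from $t \ge \tau_0$. The first step is to observe that $a_t(\ell) := 4t/(\ell(\ell+1)(\ell+2))$ itself satisfies the two recurrences displayed just before the lemma (for $\ell = 1$ the identity $2t/3 = 1 + (2(t-1)/3)(1 - 1/(2(t-1)))$ absorbs the inhomogeneity, and for $\ell \ge 2$ the check reduces to $c(\ell)(\ell+2) = c(\ell-1)(\ell-1)$ with $c(\ell) = 4/(\ell(\ell+1)(\ell+2))$). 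Consequently $e_t$ obeys the purely homogeneous recurrences
\begin{align*}
e_t(1) &= \big(1 - \tfrac{1}{2(t-1)}\big) e_{t-1}(1), \\
e_t(\ell) &= \big(1 - \tfrac{\ell}{2(t-1)}\big) e_{t-1}(\ell) + \tfrac{\ell-1}{2(t-1)}\, e_{t-1}(\ell-1), \qquad \ell \ge 2.
\end{align*}

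For the base case $t = \tau_0$, since $D_{\tau_0}(\ell) \in [0,\tau_0]$ and $a_{\tau_0}(\ell) \in [0, 2\tau_0/3]$ we have $|e_{\tau_0}(\ell)| \le \tau_0 = \tau_0^{3/2}/\sqrt{\tau_0}$. For the inductive step I would assume $|e_{t-1}(\ell)| \le C/\sqrt{t-1}$ for all $\ell$, with $C := \tau_0^{3/2}$, and split on $\ell$ into three regimes. For $\ell = 1$ the recurrence is a pure contraction and the elementary inequality $\sqrt{t}\,(1 - 1/(2(t-1))) \le \sqrt{t-1}$ (which reduces to $3t \ge 4$ on squaring) yields $|e_t(1)| \le C/\sqrt{t}$. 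For $2 \le \ell < t$ we have $\ell \le t-1 < 2(t-1)$, so both coefficients in the recurrence are non-negative; the triangle inequality combined with the inductive hypothesis then gives $|e_t(\ell)| \le (1 - 1/(2(t-1)))\,C/\sqrt{t-1} \le C/\sqrt{t}$.

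The main obstacle, and the reason a single $L^\infty$ induction is insufficient on its own, is the regime $\ell \ge t$, where $1 - \ell/(2(t-1))$ can be negative and the triangle-inequality bound threatens to amplify rather than contract. To close this case I would use the graph-theoretic input that the initial data satisfies $D_{\tau_0}(\ell) = 0$ for $\ell \ge \tau_0$ (the maximum degree of a simple graph on $\tau_0$ vertices is at most $\tau_0 - 1$). A direct induction on the recurrence — noting that $d_t(\ell) = (1 - \ell/(2(t-1))) d_{t-1}(\ell) + ((\ell-1)/(2(t-1))) d_{t-1}(\ell-1)$ vanishes as soon as both $d_{t-1}(\ell)$ and $d_{t-1}(\ell-1)$ do — propagates this support condition to $d_t(\ell) = 0$ for every $\ell \ge t$. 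In this regime $|e_t(\ell)| = a_t(\ell) \le 4t/\ell^3 \le 4/t^2$, and $4/t^2 \le \tau_0^{3/2}/\sqrt{t}$ is easily verified when $t \ge \tau_0 \ge 4$ (it is equivalent to $\tau_0^{3/2} t^{3/2} \ge 4$, and both factors are at least $8$). This closes the induction and gives the lemma.
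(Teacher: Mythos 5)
Your proof has the same skeleton as the paper's: subtract the exact solution $4t/(\ell(\ell+1)(\ell+2))$, observe that the error satisfies the homogeneous recurrence, and run an induction in which each step contracts the uniform bound by a factor $1-\frac{1}{2(t-1)}$ (the paper packages this as $|z_t(\ell)|\le\tau_0\prod_{u=\tau_0}^{t-1}\bigl(1-\frac{1}{2u}\bigr)\le\tau_0\sqrt{\tau_0/t}$, which is the same estimate as your step-by-step inequality). The genuine difference is your third regime. The paper applies the triangle inequality as though the coefficient $1-\frac{\ell}{2(t-1)}$ were non-negative for every $\ell$, which tacitly requires $\ell\le 2(t-1)$, and it says nothing about larger $\ell$; you spotted this and closed the case $\ell\ge t$ by propagating the support condition $d_t(\ell)=0$ and estimating the error by $4t/\ell^3$ directly. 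Be aware, though, that your fix imports a hypothesis the lemma does not state: the lemma allows an arbitrary non-negative integer sequence with $\sum_\ell D_{\tau_0}(\ell)=\tau_0$, while you additionally assume $D_{\tau_0}(\ell)=0$ for $\ell\ge\tau_0$. Some restriction of this kind is in fact necessary -- for example $\tau_0=1$, $D_1(L)=1$ for large $L$ gives $d_2(L)=1-L/2$, violating the stated bound -- so the lemma as printed is too generous, and your restricted version is the one actually used later (where the initial graph has $\tau_0-1$ edges; if multiple edges or loops are permitted the maximum degree is only bounded by $2(\tau_0-1)$, but the same scheme works, since the support of $d_t$ then stays within $\ell\le t+\tau_0-2\le 2(t-1)$, where the contraction step is valid, and outside that range $d_t(\ell)=0$). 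One small further point: your closing numerical check assumes $\tau_0\ge 4$, whereas the lemma claims $\tau_0\ge1$; this is easily repaired by keeping the sharper bound $4t/(\ell(\ell+1)(\ell+2))\le 4t/(t(t+1)(t+2))\le 1/\sqrt{t}\le\tau_0^{3/2}/\sqrt{t}$ for $\ell\ge t\ge 2$, but as written that corner is not covered.
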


\begin{proof}
Set
$$
z_t(\ell) = d_t(\ell) - \frac{4t}{\ell(\ell+1)(\ell+2)},
$$
for all $t \ge \tau_0$ and $\ell \ge 1$.

Note first that $D_{\tau_0}(\ell) \le \tau_0$, for all $\ell$, and so also $|z_{\tau_0}(\ell)| \le \tau_0$.
Thus the lemma holds for $t = \tau_0$.

For each $t > \tau_0$, it is straightforward to verify that
$$
z_t(1) = z_{t-1}(1) \left( 1 - \frac{1}{2(t-1)} \right),
$$
and, for $\ell >1$,
$$
z_t(\ell) = z_{t-1}(\ell) \left( 1 - \frac{\ell}{2(t-1)} \right) + z_{t-1}(\ell-1) \frac{\ell-1}{2(t-1)}.
$$
If $Z$ is a common upper bound on $|z_{t-1}(\ell)|$ and $|z_{t-1}(\ell-1)|$, we deduce that
$$
|z_t(\ell)| \le Z\left( 1 - \frac{\ell}{2(t-1)} + \frac{\ell-1}{2(t-1)} \right) = Z \left( 1 - \frac{1}{2(t-1)} \right).
$$
By induction, it now follows that
$$
|z_t(\ell)| \le \tau_0 \prod_{u=\tau_0}^{t-1} \left(1-\frac{1}{2u} \right) \le
\tau_0 \sqrt{\frac{\tau_0}{t}} = \frac{\tau_0^{3/2}}{t^{1/2}},
$$
for all $t \ge \tau_0$ and every $\ell\ge 1$, as claimed.
\end{proof}

\smallskip

Define $E_t (\ell) = D_t (\ell) - d_t(\ell)$, where, as above, we set $d_{\tau_0}(\ell) = D_{\tau_0}(\ell)$ for all $\ell \ge 1$.  Note that
$D_{\tau_0}(\ell)$ is an integer-valued random variable, determined by the graph at the initial time $\tau_0$.  Note also that
$E_{\tau_0}(\ell) = 0$ for all $\ell \ge 1$.  For the moment, we shall keep the term $E_{\tau_0}$ in our expressions, to show how the
calculation would be affected in a setting where $E_{\tau_0}$ is not necessarily zero.

For $t \ge \tau_0+1$, we have
\begin{eqnarray*}
E_t(1) & = & E_{\tau_0} (1) - \sum_{s=\tau_0}^{t-1} \frac{E_s(1)}{2s} + M_t(1)\\
E_t (\ell) & = & E_{\tau_0} (\ell) + \sum_{s=\tau_0}^{t-1} \Big (\frac{(\ell-1)E_s(\ell-1)}{2s} - \frac{\ell E_s (\ell)}{2s} \Big ) + M_t (\ell),
\quad (\ell \ge 2).
\end{eqnarray*}
This means that, for $t \ge \tau_0+1$,
\begin{eqnarray*}
E_t (1) & = & E_{t-1} (1) \Big (1 - \frac{1}{2(t-1)} \Big ) + M_t (1) - M_{t-1}(1)
\end{eqnarray*}
and, for $\ell \ge 2$,
\begin{eqnarray*}
E_t (\ell) & = & E_{t-1} (\ell) \Big (1 - \frac{\ell}{2(t-1)} \Big ) + \frac{(\ell-1) E_{t-1} (\ell-1)}{2(t-1)} + M_t (\ell) - M_{t-1} (\ell).
\end{eqnarray*}

At this point, we truncate the process $D$: we fix some $\ell_0 \ge 1$, and set
$D^{\ell_0}= (D_t^{\ell_0} (\ell): t \in \Z^+, \ell =1, \ldots, \ell_0)$ -- in other words, we restrict attention to the
numbers of vertices with degrees at most $\ell_0$.  The truncated process $D^{\ell_0}$ remains Markov, since the distribution of
$D_t(\ell)$ conditioned on $D_{t-1}$ depends only on $D_{t-1}(\ell)$ and $D_{t-1}(\ell-1)$, for each $\ell \le \ell_0$.

Once we have fixed $\ell_0$, we may restate the previous system of equations as a matrix equation, giving a recurrence for
\begin{eqnarray*}
E_t=
\begin{pmatrix}
E_t(1)\\
\vdots \\
E_t (\ell_0)
\end{pmatrix}.
\end{eqnarray*}
We have, for $t \ge \tau_0 + 1$,
\begin{eqnarray*}
E_t = A_{t-1} E_{t-1} + \Delta M_t,
\end{eqnarray*}
where
\begin{eqnarray*}
\Delta M_t=
\begin{pmatrix}
\Delta M_t(1)\\
\vdots \\
\Delta M_t (\ell_0)
\end{pmatrix},
\end{eqnarray*}
with $\Delta M_t (\ell) = M_t (\ell) - M_{t-1} (\ell)$ for each $\ell$, and, for $s \ge \tau_0$, the matrix $A_s$ is given by
\begin{eqnarray*}
\begin{pmatrix}
1-\frac{1}{2s} & 0 & \cdots & 0\\
\frac{1}{2s} & 1-\frac{2}{2s} &  \cdots & 0\\
\vdots & \ddots & \ddots & 0\\
0 & 0 & \frac{\ell_0-1}{2s} & 1-\frac{\ell_0}{2s}
\end{pmatrix}.
\end{eqnarray*}

Hence it follows that, for $t \ge \tau_0+1$,
\begin{eqnarray*}
E_t = \left(\prod_{s=\tau_0}^{t-1} A_s \right) E_{\tau_0} + \sum_{s=\tau_0+1}^t \left(\prod_{u=s}^{t-1} A_u\right) \Delta M_s.
\end{eqnarray*}
Here and subsequently, the notation $\prod_{s=\tau_0}^{t-1} A_s$ indicates the matrix product $A_{t-1}\cdots A_{\tau_0}$, with the
indices taken in decreasing order.
At this point, we recall that $E_{\tau_0} = 0$, so that
$$
E_t = \sum_{s=\tau_0+1}^t \left(\prod_{u=s}^{t-1} A_u\right) \Delta M_s.
$$

We shall control the deviations of $E_t$, although this process is not itself a martingale, and so we cannot directly apply our
martingale deviation inequalities.  The process $(E_t)$ is a transform of the martingale $(M_t)$, in that it is a sum of its differences,
multiplied by the appropriate $\prod_{u=s}^{t-1} A_u$, which depend on $t$.  In order to get around this difficulty, we now introduce,
for each $\tau > \tau_0$, a martingale $\widetilde{M}^\tau$ stopped at $\tau$, whose value at $\tau$ is the quantity $E_\tau$ of interest.

We fix $\tau > \tau_0$ and define, for $t \le \tau$,
$$
\widetilde{M}^\tau_t = \sum_{s=\tau_0+1}^t \left(\prod_{u=s}^{\tau-1} A_u\right) \Delta M_s,
$$
and $\widetilde{M}^\tau_t = \widetilde{M}^\tau_{\tau}$ for $t > \tau$, then it is easily checked that
$\widetilde{M}^\tau = (\widetilde{M}^\tau_t)$ is a martingale,
and that $\widetilde{M}^\tau_{\tau} = E_{\tau}$. Thus we can obtain bounds on $E_{\tau}$ by studying the martingale $\widetilde{M}^\tau$.

\smallskip

From Lemma~\ref{lem:dt}, we have that, for every $\tau \ge \tau_0$ and every $\ell = 1,\dots, \ell_0$,
$$
D_{\tau}(\ell) = d_{\tau}(\ell) + \widetilde{M}^\tau_{\tau} (\ell) \le \frac{4\tau}{\ell^3} + \tau_0 + \widetilde{M}^\tau_{\tau} (\ell).
$$

\smallskip

We now consider the transitions of the truncated process $D^{\ell_0}$, with state space $(\Z^+)^{\ell_0}$.  Recall that each
transition involves the creation of one new vertex of degree~1, and the increase of a degree of an existing vertex by~1.  This means
that a transition of the truncated process involves an increase of~1 in $D^{\ell_0}(1)$, and either: (i)~a decrease of~1
in $D^{\ell_0}(k)$ and an increase of~1 in $D^{\ell_0}(k+1)$, for some $k \in \{ 1, \dots, \ell_0-1\}$, (ii)~a decrease of~1 in
$D^{\ell_0}(\ell_0)$, or (iii)~no further change.  In other words, the vector $D_{s+1}^{\ell_0}$ is obtained from
$D_s^{\ell_0}$ by adding one of the following vectors: \\
(i)~$y_k=e_1-e_k+e_{k+1}$, for some $k \in \{1, \dots, \ell_0-1\}$, \\
(ii)~$y_{\ell_0} = e_1 - e_{\ell_0}$, \\
(iii)~$y_0=e_1$.  \\
Here $e_j$ denotes the standard basis vector in $\Z^{\ell_0}$ with a~1 in the
$j$th coordinate and 0s elsewhere: here and in what follows, we abuse notation by suppressing the dependence on $\ell_0$.
The transition probabilities are then given by
\begin{eqnarray*}
P_s(D_s, D_s + y_k) &=& \frac{k D_s(k)}{2s-2}  \qquad (k=1, \dots, \ell_0) \\
P_s(D_s, D_s + y_0) &=& 1 - \sum_{k=1}^{\ell_0} \frac{k D_s(k)}{2s-2}\,.
\end{eqnarray*}
Here too we have removed the superscripts $\ell_0$ for clarity.

\smallskip

We can write, for $s \ge \tau_0$,
\begin{eqnarray*}
\Delta M_{s+1} &=& D_{s+1} - D_s - \sum_{k=0}^{\ell_0} P_s(D_s,D_s+y_k) y_k \\
&=& \sum_{k=0}^{\ell_0} y_k [\1_{D_{s+1}-D_s=y_k}-P_s(D_s,D_s+y_k)].
\end{eqnarray*}

We consider running the process up to some fixed $\tau > \tau_0$: all our notation should specify the dependence on $\tau$, but
again where possible we shall suppress this.

For $\tau_0 \le s < \tau$, we define $B_s = B_s^\tau = \prod_{u=s+1}^{\tau-1} A_u$, so that
$$
\widetilde{M}_s^\tau = \sum_{w=\tau_0}^{s-1} B_w \Delta M_{w+1}.
$$
We then have, for $\tau_0 \le s < \tau$,
$$
\Delta \widetilde{M}^\tau_{s+1} = \widetilde{M}^\tau_{s+1}-\widetilde{M}^\tau_s = B_s \Delta M_{s+1} =
B_s\sum_{k=0}^{\ell_0} y_k [\1_{D_{s+1}-D_s=y_k}-P_s(D_s,D_s+y_k)].
$$

Now we define $\widetilde{D} = \widetilde{D}^{\ell_0}$ by
\begin{eqnarray*}
\widetilde{D}_t & = & \sum_{s=\tau_0}^{t-1} B_s\sum_{k=0}^{\ell_0} y_k P_s (D_s,D_s+y_k)\\
&&\mbox{} + \sum_{s=\tau_0}^{t-1} B_s\sum_{k=0}^{\ell_0} y_k [\1_{D_{s+1}-D_s=y_k}-P_s(D_s,D_s+y_k)], \quad (t \le \tau), \\
&=& \sum_{s=\tau_0}^{t-1} \sum_{k=0}^{\ell_0} P_s(D_s,D_s+y_k) [B_s y_k] + \widetilde{M}_t^\tau.
\end{eqnarray*}
so that, for $t \le \tau$,
\begin{eqnarray*}
\widetilde{D}_t & = & \sum_{s=\tau_0}^{t-1} B_s\sum_{k=0}^{\ell_0} y_k \1_{D_{s+1}-D_s=y_k} \\
&=& \sum_{s=\tau_0}^{t-1} B_s (D_{s+1}-D_s)
\end{eqnarray*}
and so
$$
\widetilde{D}_{t+1} - \widetilde{D}_t = B_t(D_{t+1} - D_t).
$$

\smallskip

The process $\widetilde{D}$ is not in general a Markov process.  However, we may define a process $Y = Y^\tau$ by setting
$Y_t = (D_t, \widetilde{D}_t)$ for $t \le \tau$, and $Y_t = Y_{\tau}$ for $t \ge \tau$.  This extended process $Y$ is Markovian,
with state space $E = (\Z^+)^{\ell_0} \times (\R^+)^{\ell_0}$.
At each time $t$ with $\tau_0 \le t < \tau$, the one-step transition matrix $\widetilde{P}_t$ for $Y$ is derived from that of $D$.
Specifically, if $D_{t+1}-D_t = y_k$, then $\widetilde{D}_{t+1} - \widetilde{D}_t = B_t y_k$, and $Y_{t+1} - Y_t = (y_k, B_ty_k)$.

Our plan is to apply Theorem~\ref{thm.key} to the Markov process $Y$, and, for $\ell = 1, \dots, \ell_0$, to the projection function
$g=g^\ell$ taking $(x,\widetilde{x}) \in E$ to $\widetilde{x}(\ell)$.  For each $k = 0, \dots, \ell_0$, if $D_{t+1} - D_t = y_k$,
then $g(Y_{t+1}) - g(Y_t) = [B_t y_k](\ell)$, the $\ell$-th entry of the vector $B_t y_k$.  Since $B_t$ is a product of non-negative
sub-stochastic matrices, it too is
non-negative and sub-stochastic.  The vector $y_k$ has all its entries in $\{ 0, +1, -1\}$, with at most two positive and one
negative entries, so each co-ordinate of the
vector $B_t y_k$ is a sum of at most two entries of $B_t$, minus at most one other entry.  Therefore $|[B_t y_k] (\ell)| \le 1$ for
all $t$, $k$ and $\ell$.  So, in applying Theorem~\ref{thm.key}, we may take $J=1$.

For $\ell = 1,\dots, \ell_0$, and $\tau_0 \le t < \tau$, we therefore have
\begin{eqnarray*}
\Phi^{g^\ell}_t(Y) &=& \sum_{s=\tau_0}^t \sum_{x'} P_s(D_s,x') \left( g^\ell(x') - g^\ell(D_s) \right)^2 \\
&=& \sum_{s=\tau_0}^t \sum_{k=0}^{\ell_0} P_s(D_s,D_s+y_k) \left( [B_s y_k](\ell) \right)^2.
\end{eqnarray*}
For brevity, we set $\Phi^\ell_t(Y) = \Phi^{g^\ell}_t(Y)$ from now on.

For each $\ell = 1, \dots, \ell_0$, we set
$$
R^\ell = 1600 \frac{\tau-1}{\ell^3} + (10\psi)^4 \log(\psi \tau),
$$
and, for $\ell=1, \dots, \ell_0$, we set
$$
T_R^\ell = \inf \{ t \ge \tau_0 : \Phi^\ell_t(Y) > R^\ell \}.
$$
We now apply Theorem~\ref{thm.key}, with $J=1$, $R=R^\ell$, and $\omega = 9 \log (\psi \tau)$, noting that $\omega J^2\le R^\ell$;
we obtain that, for each $\tau \ge \tau_0$ and $\ell = 1, \dots, \ell_0$,
$$
\P \left( \left( \sup_{\tau_0\le t \le \tau} |\widetilde{M}_t^\tau(\ell)| > 3 \sqrt{\log(\psi\tau) R^\ell } \right)
\wedge (T_R^\ell \ge \tau) \right) \le \frac{2}{\psi^2 \tau^2}.
$$

\smallskip

Let $\widehat{T}_\Delta = \inf \{ s \ge \tau_0 : D_s(k) > 0 \mbox{ for some } k > \psi \sqrt {s-1} \}$, the first time $s$ such that
there is a vertex of degree greater than $\psi \sqrt{s-1}$: by Theorem~\ref{maxdegree},
$\P(\widehat{T}_\Delta < \infty) \le 1/\psi$.
Also, for each $k= 1, \dots, \ell_0$, let
$$
\widehat{T}_k = \inf \left\{ s \ge \tau_0 : D_s(k) > 5\frac{s}{k^3} + 400\psi^2 \log(\psi s) \right\}.
$$
Note that, for $\tau_0 \le s < \min \big( (k/\psi)^2+1 ,\widehat{T}_\Delta \big)$, $D_s(k) = 0$, and so
$\widehat{T}_k \ge \min\big( (k/\psi)^2 ,\widehat{T}_\Delta\big)$ for each $k = 1,\dots,\ell_0$.
Finally, let $\widehat{T}$ be the minimum of $\widehat{T}_1, \dots, \widehat{T}_{\ell_0}$.

\smallskip

In the next section, we shall prove the following result.

\begin{lemma}\label{PhivR}
For all $t \le \widehat{T} \land \widehat{T}_\Delta$, and all $\ell=1, \dots, \ell_0$, $\Phi_{t-1}^\ell(Y) \le R^\ell $.
\end{lemma}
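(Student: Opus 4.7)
The plan is to bound
\begin{equation*}
\Phi^\ell_{t-1}(Y) = \sum_{s=\tau_0}^{t-1} \sum_{k=0}^{\ell_0} P_s(D_s, D_s+y_k)\,\bigl([B_s y_k](\ell)\bigr)^2
\end{equation*}
by $R^\ell$, for $t \le \widehat{T} \wedge \widehat{T}_\Delta$. The first step is to exploit the lower-triangular structure of $B_s = \prod_{u=s+1}^{\tau-1} A_u$: each $A_u$ is lower triangular with non-negative entries, so is $B_s$, and in particular $B_s(\ell, j)=0$ for $j > \ell$. Recalling $y_k = e_1 - e_k + e_{k+1}$ for $1 \le k < \ell_0$ (with $y_0 = e_1$ and $y_{\ell_0} = e_1 - e_{\ell_0}$), it follows that $[B_s y_k](\ell) = B_s(\ell, 1)$ whenever $k = 0$ or $k > \ell$, while $[B_s y_k](\ell) = B_s(\ell, 1) - B_s(\ell, k) + B_s(\ell, k+1)$ (with the convention $B_s(\ell, \ell+1) = 0$) for $1 \le k \le \ell$. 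The $k = \ell_0$ case is either vacuous (if $\ell_0 > \ell$) or absorbed using $D_s(\ell_0)=0$ for $\ell_0 > \psi \sqrt{t-1}$, which is guaranteed by $t \le \widehat{T}_\Delta$.

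Expanding the squares and bounding cross-terms by AM--GM reduces the problem to estimating (i) $\sum_s B_s(\ell, 1)^2$, and (ii) $\sum_s \sum_{k=1}^\ell p_k B_s(\ell, k)^2$, where $p_k = k D_s(k)/(2(s-1))$. The entries $B_s(\ell, k)$ have a clean probabilistic interpretation: $B_s(\ell, k) = \mathbb{P}(X_{\tau-1} = \ell \mid X_{s+1} = k)$ for the pure-birth chain that jumps from state $j$ at time $u$ to $j+1$ with probability $j/(2u)$; under a logarithmic change of time this is essentially a Yule process, giving the approximation $B_s(\ell, k) \approx \binom{\ell-1}{k-1} q^k (1-q)^{\ell-k}$ with $q = \sqrt{(s+1)/(\tau-1)}$. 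Substituting and evaluating via beta-function identities, as in the computation that produced $d_\tau(\ell) \approx 4\tau/(\ell(\ell+1)(\ell+2))$ in Lemma~\ref{lem:dt}, gives $\sum_s B_s(\ell, 1)^2 = O(\tau/\ell^4)$ (a subleading contribution), while the sum in (ii), combined with the bound $p_k = O(1/k^2)$ from $D_s(k) \le 5s/k^3$ (valid before $\widehat{T}$), is dominated by $k$ comparable to $\ell$ and evaluates to $O(\tau/\ell^3)$, matching the target $1600(\tau-1)/\ell^3$. The additive $400\psi^2 \log(\psi s)$ correction in the bound on $D_s(k)$, summed over $k \le \psi\sqrt{s-1}$ (guaranteed by $\widehat{T}_\Delta$), contributes the $(10\psi)^4 \log(\psi\tau)$ piece of $R^\ell$.

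The main obstacle is carrying out the calculation in (ii) with enough precision to pin down the explicit constant $1600$: both the approximation error between the true $B_s(\ell, k)$ and its Yule-form counterpart, and the partitioning of the $(s, k)$ sums around the mode of $B_s(\ell, \cdot)$, require care. A cleaner route may be a direct recursive argument on $B_s$ via $B_s = B_{s+1} A_{s+1}$, combined with induction on $\ell - k$ or on $\tau - s$, which avoids the continuous-time approximation entirely. Boundary contributions near $s = \tau_0$, as well as those from the truncation at $\ell_0$, are small and easily absorbed into the additive $(10\psi)^4 \log(\psi\tau)$ term.
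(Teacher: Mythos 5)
Your reduction loses the cancellation that the whole argument hinges on, and the resulting bound is genuinely too weak. When you expand $\bigl(B_s(\ell,1)-B_s(\ell,k)+B_s(\ell,k+1)\bigr)^2$ and use AM--GM to arrive at sums of \emph{individual} squared entries $\sum_s\sum_k p_k B_s(\ell,k)^2$ with $p_k=kD_s(k)/2(s-1)\lesssim k^{-2}$, the claim that this is $O(\tau/\ell^3)$ is false. Using your own Yule approximation $B_s(\ell,k)\approx\binom{\ell-1}{k-1}(1-v)^{\ell-k}v^{k}$ with $v=\sqrt{s/(\tau-1)}$, the entries near the mode $k\approx \ell v$ are of size about $v/\sqrt{2\pi\ell v(1-v)}$ over a window of width $\sqrt{\ell v(1-v)}$, so for fixed $s$ the inner sum is of order $\ell^{-2}\bigl(\ell v(1-v)\bigr)^{-1/2}$, and summing over $s$ (i.e.\ integrating $2(\tau-1)v\,dv$) gives a quantity of order $(\tau-1)/\ell^{5/2}$ --- larger than $R^\ell\approx 1600(\tau-1)/\ell^3$ by a factor $\sqrt{\ell}$, so the lemma cannot be reached this way in the main range $\ell\sim (t/\log t)^{1/3}$ (and the concentration it would yield is correspondingly weaker than Theorem~\ref{thm:main}). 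The paper avoids this by splitting off only the $B_s(\ell,1)^2$ term and keeping the \emph{differences} $B_s(\ell,k)-B_s(\ell,k+1)$ intact: in the Yule approximation these carry the factor $\ell(1-v)-j$, which vanishes at the binomial mode, and it is exactly this factor that makes the difference-based sum $Q_1$ of order $(\tau-1)/\ell^3$.

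Two further points. First, even granting the correct decomposition, the quantitative comparison between the true entries $a_j(s)=B_s(\ell,\ell-j)$ and their continuous-time counterparts $f_j(s)$ is the bulk of the work (the paper's Lemma~\ref{lem:ejs}, proved by bounding $f''_j$ and an induction on $j$ and $\tau-s$); your proposal correctly identifies this as the obstacle but does not supply it, and the alternative ``direct recursion on $B_s$'' is only a suggestion. Second, the regime $\ell$ much larger than the typical maximum degree needs its own argument: for $k\le\psi\sqrt{s-1}$ but $\ell>2\psi\sqrt{\tau-1}$ the transition probabilities are not zero, and the paper controls these terms via the degree-growth estimate (Corollary~\ref{cor:2times}) rather than via the truncation at $\ell_0$, which is what your sketch appeals to.
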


This result can be restated as saying that $T_R^\ell \ge \widehat{T} \land \widehat{T}_\Delta$ for each $\ell = 1, \dots, \ell_0$.

\smallskip

Now, for each $\tau \ge \tau_0$ and each $\ell = 1, \dots, \ell_0$, set
$$
\delta_\tau(\ell) = 120 \sqrt{\frac{\tau \log (\psi \tau)}{\ell^3}} + 300 \psi^2 \log (\psi \tau) ,
$$
which is slightly less than the bound on the deviation appearing in the statement of the theorem.
Observe that, for $\tau\ge \tau_0$ and $\ell=1, \dots, \ell_0$,
$$
\delta_\tau(\ell)^2 \ge 14400 \frac{\tau \log (\psi \tau)}{\ell^3} + 9 \times (10\psi)^4 \log^2(\psi \tau) =
9 \log(\psi \tau) R^\ell.
$$
Thus we have
$$
\delta_\tau(\ell) \ge 3 \sqrt{\log(\psi \tau) R^\ell},
$$
and so, using Lemma~\ref{PhivR},
\begin{eqnarray*}
\lefteqn{\P \left( \left(\sup_{\tau_0 \le t \le \tau} |\widetilde{M}^\tau_t(\ell)| > \delta_\tau(\ell) \right)
\wedge (\widehat{T} \land \widehat{T}_\Delta \ge \tau) \right)} \\
&\le& \P \left( \left(\sup_{\tau_0 \le t \le \tau} |\widetilde{M}^\tau_t(\ell)| > \delta_\tau(\ell) \right)
\wedge (T^\ell_R \ge \tau) \right) \\
&\le& \frac{2}{\psi^2 \tau^2}.
\end{eqnarray*}
Recall that $\widetilde{M}^\tau_\tau = E_\tau$ for each $\tau \ge \tau_0$, so we now deduce that
\begin{equation} \label{failure}
\P \left( \left(|E_{\tau}(\ell)| > \delta_\tau(\ell)\right) \wedge (\widehat{T} \land \widehat{T}_\Delta\ge \tau) \right)
\le \frac{2}{\psi^2 \tau^2},
\end{equation}
for all $\tau \ge \tau_0$ and $\ell=1, \dots, \ell_0$.

\smallskip

We now wish to bound the total probability that there is some pair $(\tau, \ell)$, with $\tau \ge \tau_0$ and $\ell=1, \dots, \ell_0$,
such that $|E_{\tau}(\ell)| > \delta_\tau(\ell)$ and $\tau < \widehat{T} \land \widehat{T}_\Delta$: recall that we want a bound
independent of $\ell_0$.

For those pairs $(\tau, \ell)$ with $\ell \le \psi \sqrt{\tau-1}$, we sum the bounds from (\ref{failure}), and obtain that
\begin{eqnarray*}
\lefteqn{\P \Big( \left(|E_\tau(\ell)| > \delta_\tau(\ell) \right) \wedge (\widehat{T} \land \widehat{T}_\Delta > \tau)} \\
&& \qquad \mbox{ for some } \tau\ge \tau_0, 1\le \ell \le \min (\ell_0,\psi \sqrt {\tau-1}) \Big) \\
&\le& \frac{2}{\psi^2} \sum_{\tau = \tau_0}^\infty \frac{\psi \sqrt{\tau-1} }{\tau^2} \\
&\le& \frac{2}{\psi} \int_3^\infty \tau^{-3/2} \, d\tau \\
&\le& \frac{3}{\psi}.
\end{eqnarray*}

For those pairs $(\tau, \ell)$ with $\ell > \psi \sqrt{\tau-1}$, we have either $\widehat{T}_\Delta \le \tau$ or $D_\tau(\ell) =0$, and in
the latter case we have
$$
|E_\tau(\ell)| = d_\tau(\ell) \le \frac{4\tau}{\ell^3} + \tau_0 < \frac{4}{\psi^2 \ell} + \frac{1}{\ell^3} + 10^4\psi^2 < \delta_\tau(\ell).
$$
Therefore, with probability at least $1- 3/\psi$, we have, for all
$\ell = 1, \dots, \ell_0$ and all $\tau \ge \tau_0$, that either $|E_{\tau}(\ell)| \le \delta_\tau(\ell)$ or $\tau \ge \widehat{T} \land \widehat{T}_\Delta$.

\smallskip

We now set $T^*_\ell = \inf \{ s \ge \tau_0 : |E_s(\ell)| > \delta_s(\ell)\}$ for each $\ell=1,\dots, \ell_0$, and
$T^* = \min (T^*_\ell, \ell=1, \dots, \ell_0)$;
we obtain that
\begin{equation} \label{times}
\P ( T^* < \infty \mbox{ and } T^* \le \widehat{T} \land \widehat{T}_\Delta) \le 3/\psi.
\end{equation}

On the other hand, if $\widehat{T}_\ell < T^*_\ell$ for some $\ell=1, \dots, \ell_0$, there
is an $s \ge \tau_0$ with $D_s(\ell) > 5 s/\ell^3 + 400 \psi^2 \log (\psi s)$ and
\begin{eqnarray*}
D_s(\ell) &\le& \frac{4s}{\ell^3} + \tau_0 + \delta_s(\ell) \\
&=& \frac{4s}{\ell^3} + \tau_0 + 120 \sqrt{\frac{s \log (\psi s)}{\ell^3}} + 300 \psi^2 \log (\psi s) \\
&\le& \frac{4s}{\ell^3} + \psi^2 + \left(\frac{s}{\ell^3} + 3600\log(\psi s)\right) + 300 \psi^2 \log(\psi s) \\
&\le& 5 \frac{s}{\ell^3} + 350 \psi^2 \log(\psi s),
\end{eqnarray*}
which is a contradiction.

We conclude that $T^*_\ell \le \widehat{T}_\ell$ for all $\ell=1, \dots, \ell_0$.
Recalling that $\widehat{T}$ is the minimum of $\widehat{T}_1, \dots, \widehat{T}_{\ell_0}$ and
$T^*$ is the minimum of $T^*_1, \dots, T^*_{\ell_0}$,
this implies that $T^* \le \widehat{T}$.
Equation~(\ref{times}) now implies that $\P (T^* < \infty \mbox{ and } T^* \le \widehat{T}_\Delta) \le 3/\psi$.
However, we also have that $\P(\widehat{T}_\Delta < \infty) \le 1/\psi$, so
$\P(T^* < \infty) \le 4/\psi$.

This conclusion is equivalent to the statement that
$$
\P\left( |E_s(\ell)| \le \delta_s(\ell) \mbox{ for all $\ell \ge 1$ and $s \ge \tau_0$} \right) \ge 1 - \frac{4}{\psi}.
$$
This implies the result stated, since
$$
\left| D_s(\ell) - \frac{4s}{\ell(\ell+1)(\ell+2)} \right| \le |E_s(\ell)| + \tau_0 \le |E_s(\ell)| + \psi^2.
$$
\end{proof}

\section{Bounds for $\Phi^\ell_{\tau-1}(Y)$}  \label{Sbounds}

Our aim in this section is to prove Lemma~\ref{PhivR}, which states that
$$
\Phi^\ell_{\tau-1}(Y) = \sum_{s=\tau_0}^{\tau-1} \sum_{k=0}^{\ell_0} P_s(D_s,D_s+y_k) \left([B_sy_k](\ell)\right)^2
$$
is at most
$$
R^\ell = 1600 \frac{\tau-1}{\ell^3} + (10\psi)^4 \log(\psi \tau),
$$
whenever $\tau_0 \le \tau \le \widehat{T} \land \widehat{T}_\Delta$ and $1 \le \ell \le \ell_0$.

Recall that, for
$s < \tau \le \widehat{T} \land \widehat{T}_\Delta$, and $1 \le k \le \ell_0$, we have
\begin{equation} \label{dsk}
D_s(k) \le \begin{cases} 0 & k > \psi \sqrt{s-1} \\
5 \frac{s}{k^3} + 400 \psi^2 \log(\psi s) & k \le \psi \sqrt{s-1} \end{cases}.
\end{equation}
For this section, we may and shall assume that we do indeed have these bounds on the values of $D_s(k)$.

Recall also that, for $s=\tau_0, \dots, \tau-1$, $B_s$ is the matrix product $A_{\tau-1}\cdots A_{s+1}$, and that
$$
y_k = \begin{cases} e_1 & k=0 \\
e_1 - e_k + e_{k+1} & 1 \le k < \ell_0 \\
e_1 - e_{\ell_0} & k = \ell_0.
\end{cases}
$$
We may now write, for $1 \le \ell \le \ell_0$, and $\tau_0 \le s < \tau$,
\begin{eqnarray*}
{}[B_s y_0](\ell) &=& [B_s e_1](\ell)\\
& =& B_s(\ell,1) \\
{}[B_s y_k](\ell) &=& [B_s e_1](\ell) - [B_s e_k](\ell) + [B_s e_{k+1}](\ell) \\
&=& B_s(\ell,1) - B_s(\ell,k) + B_s(\ell,k+1) \quad (1 \le k < \ell_0) \\
{}[B_s y_{\ell_0}](\ell) &=& [B_s e_1] (\ell) - [B_s e_{\ell_0}](\ell) \\
&=& B_s(\ell,1) - B_s(\ell, \ell_0).
\end{eqnarray*}
where $[B_s](i,j)$ denotes the $(i,j)$-entry of the matrix $B_s$.  We then have
\begin{eqnarray*}
{}[B_s y_0](\ell)^2 &=& B_s(\ell,1)^2 \\
{}[B_s y_k](\ell)^2 &\le& 2 B_s(\ell,1)^2 + 2(B_s(\ell,k) - B_s(\ell,k+1))^2 \quad (1 \le k < \ell_0) \\
{}[B_s y_{\ell_0}](\ell)^2 &\le& 2B_s(\ell,1)^2 + 2B_s(\ell, \ell_0)^2.
\end{eqnarray*}

Provided we interpret $B_s(\ell_0,\ell_0+1)$ as equal to zero,
we can now bound the sum over $k$, for any $s$ and any $\ell \le \ell_0$, as
\begin{eqnarray*}
\lefteqn{ \sum_{k=0}^{\ell_0} P_s (D_s,D_s+y_k) {\left([B_s y_k](\ell)\right)}^2 } \\
&\le& P_s(D_s,D_s+y_0) B_s(\ell,1)^2 \\
&&\mbox{} + 2\sum_{k=1}^{\ell_0} P_s (D_s,D_s+y_k)
\left[B_s(\ell,1)^2 + (B_s(\ell,k) - B_s(\ell,k+1))^2\right] \\
&\le& 2B_s(\ell,1)^2 + 2 \sum_{k=1}^{\ell_0} P_s (D_s,D_s+y_k)
(B_s(\ell,k) - B_s(\ell,k+1))^2.
\end{eqnarray*}

\smallskip

For $1\le \ell < \ell_0$, all terms in the sum with $k > \ell$ are zero, since the matrix $B_s$ is lower-triangular, and
therefore we have
$$
\Phi^\ell_{\tau-1}(Y) \le 2 \sum_{s=\tau_0}^{\tau-1}
B_s(\ell,1)^2 + 2 \sum_{s=\tau_0}^{\tau-1} \sum_{k=1}^{\ell} P_s (D_s,D_s+y_k) (B_s(\ell,k) - B_s(\ell,k+1))^2 .
$$
The key task is thus to estimate the entries $B_s(\ell,k)$ of the matrix product $B_s = A_{\tau-1}\cdots A_{s+1}$, and in
particular the differences $|B_s(\ell,k) - B_s(\ell,k+1)|$.
The recurrence satisfied by these matrix entries is that, for $0\le j < \ell$:
\begin{eqnarray*}
\lefteqn{B_{s-1}(\ell,\ell-j) = [B_sA_s](\ell,\ell-j)} \\
&=& B_s(\ell,\ell-j) A_s(\ell-j,\ell-j) + B_s(\ell,\ell-j+1) A_s(\ell-j+1,\ell-j),
\end{eqnarray*}
since the only non-zero entries of $A_s$ in column~$(\ell-j)$ are those in rows $(\ell-j)$ and $(\ell-j+1)$.  Substituting for the
values of these entries yields
$$
B_{s-1}(\ell,\ell-j) = B_s(\ell,\ell-j) \left( 1 - \frac{\ell-j}{2s} \right) + B_s(\ell,\ell-j+1) \frac{\ell-j}{2s}.
$$
For notational convenience, we fix $\ell \ge 1$ and write
$$
a_j(s) = a_j^{(\ell)}(s) = B_s(\ell,\ell-j),
$$
for $s =\tau_0, \dots, \tau-1$ and $j=-1, 0,\dots,\ell-1$.

Rewriting in terms of the $a_j(s)$ gives:
\begin{equation} \label{bound}
\Phi^\ell_{\tau-1}(Y) \le 2 \sum_{s=\tau_0}^{\tau-1}
a_{\ell-1}(s)^2 + 2 \sum_{s=\tau_0}^{\tau-1} \sum_{k=1}^{\ell} P_s (D_s,D_s+y_k) (a_{\ell-k}(s) - a_{\ell-k-1}(s))^2 .
\end{equation}
The transition probabilities $P_s(D_s,D_s+y_k)$ can be expressed explicitly as $\displaystyle \frac{k D_s(k)}{2(s-1)}$ for
each $s$ and $k$.

The recurrence satisfied by the $a_j(s)$ is then:
\begin{eqnarray*}
a_j(s-1) = \frac{\ell - j}{2s} a_{j-1}(s) + \left( 1 - \frac{\ell -j}{2s}\right) a_j(s),
\end{eqnarray*}
for $0 \le j \le \ell-1$, and $\tau_0 \le s \le \tau-1$.
We also have $B_{\tau-1} = I$, the identity matrix, so that $a_0(\tau-1) = 1$, and $a_j(\tau-1)=0$ for $j>0$.
Note also that $a_{-1}(s) = 0$ for all $s$, since the matrix $B_s$ is lower-triangular.  These boundary
conditions, together with the recurrence relation, suffice to determine all the values $a_j(s)$.

There is a natural interpretation of the term $a_j(s)$: it is the probability that a fixed vertex $v$ with degree $\ell - j$ at time
$s$ will have degree $\ell$ at time $\tau-1$.  This can most easily be seen by checking that this system of probabilities satisfies
the boundary conditions and the recurrence relation.  In the notation of Section~\ref{Seve},
$$
a_j(s) = \P (X_{\tau-1}(v) = \ell \mid X_s(v) = \ell-j).
$$
One immediate consequence is that $0 \le a_j(s) \le 1$ for all $j$ and $s$.

\smallskip

It may be of interest to note that there is a formula for the $a_j(s)$ as an alternating sum:
$$
a_j(s) = \binom{\ell-1}{j} \sum_{i=\ell-j}^\ell \binom{j}{\ell-i} (-1)^{i-\ell+j} \prod_{u=s+1}^{\tau-1} \left( 1 - \frac{i}{2u} \right).
$$
One may verify that this formula satisfies the recurrence.  It can also be obtained by observing that the matrices $A_s$ can be simultaneously
diagonalised, leading to a formula for the matrix $B_s$.  We also obtain
$$
a_{\ell-k}(s) - a_{\ell-k-1}(s) = \binom{\ell-1}{k} \frac{1}{\ell-k} \sum_{i=k}^\ell i \binom{\ell-k}{i-k} (-1)^{i-k}
\prod_{u=s+1}^{\tau-1} \left( 1 - \frac{i}{2u} \right).
$$
Although these formulae are quite appealing, we have been unable to extract useful bounds from them.

\smallskip

At this point, we break into three cases.  The main case of interest is when $8 \le \ell \le 2 \psi \sqrt{\tau-1}$, but we also need
to deal with values of $\ell$ outside this range, and we do this first.

For $\ell \le 7$, all we have to do is note that, from (\ref{bound}),
$$
\Phi^\ell_{\tau-1}(Y) \le 4 (\tau-1) \le 1600 \frac{\tau-1}{\ell^3}.
$$

\smallskip

Now suppose that $\ell > 2 \psi \sqrt{\tau-1}$.
By assumption, whenever $k > \psi \sqrt{s-1}$, we have $D_s(k) =0$ and so $P_s(D_s,D_s+y_k) =0$, and such terms contribute nothing
to the double sum.  We now need to bound the
contribution of terms where $k \le \psi \sqrt {s-1}$ and $\ell > 2 \psi \sqrt{\tau-1}$.  To do this, we use the inequality
$(a_{\ell-k}(s) - a_{\ell-k-1}(s))^2 \le a_{\ell-k}(s)^2 + a_{\ell-k-1}(s)^2$, and bound the size of any
term $a_{\ell-k}(s)$ subject to the given conditions.  For this, we observe that
\begin{eqnarray*}
a_{\ell-k}(s) &\le& \P (X_{\tau-1}(v) \ge \ell \mid X_s(v) = k) \\
&\le& \P \left(X_{\tau-1}(v) \ge 2\psi \sqrt{\tau-1} \mid X_s(v) = \left\lfloor \psi \sqrt{s-1} \right\rfloor \right).
\end{eqnarray*}
We now apply Corollary~\ref{cor:2times}, with $\tau_0$ replaced by $s$, $m_0$ replaced by $\lfloor \psi \sqrt {s-1}\rfloor$,
and $\omega$ replaced by $(s-1)^{1/14}$.  We have $\lfloor \psi \sqrt {s-1}\rfloor \ge 10^5 (s-1)^{7/14}$, since $\psi > 2\times 10^5$.
We also have $(s-1)^{1/14} \ge 4$ provided $s> 2^{28}$.  So, for $s > 2^{28}$, we have
$$
a_{\ell-k}(s) \le e^{-\frac54 (s-1)^{1/14}}.
$$

Thus, for each $s > 2^{28}$,
$$
\sum_{k=1}^{\lfloor\psi \sqrt {s-1}\rfloor} P_s (D_s,D_s+y_k) a_{\ell-k}(s)^2 \le e^{-\frac52 (s-1)^{1/14}}.
$$
For $s < 2^{28}$, we have
$$
\sum_{k=1}^{\lfloor\psi \sqrt {s-1}\rfloor} P_s (D_s,D_s+y_k) a_{\ell-k}(s)^2 \le 1.
$$

Therefore
\begin{eqnarray*}
\Phi^\ell_{\tau-1}(Y) &\le& 2 \sum_{s=\tau_0}^{\tau-1} \left[ a_{\ell-1}(s)^2 +
2 \sum_{k=1}^{\lfloor\psi \sqrt {s-1}\rfloor} P_s (D_s,D_s+y_k) a_{\ell-k}(s)^2 \right] \\
&\le& 6 \left[ 2^{28} + \sum_{s=2^{28}+1}^\infty e^{-\frac52 (s-1)^{1/14}} \right] \\
&\le& 2 \times 10^9,
\end{eqnarray*}
which, since $\psi \ge 2 \times 10^5$, is at most $\psi^2$.  This comfortably gives the required result
in the case where $\ell > 2 \psi \sqrt {\tau-1}$.

\smallskip

For the remainder of this section, we assume that $8 \le \ell \le 2 \psi \sqrt{\tau-1}$.

Although our exact expression for the $a_j(s)$ proved difficult to work with, we now give a function $f_j(s)$ which has a
simple form, and which satisfies the boundary conditions and an approximate version of the recurrence; our plan is to
show that $a_j(s)$ is close to $f_j(s)$ for all values of $j$ and $s$.

For $0 \le j \le \ell -1$ and $0\le s\le \tau-1$, set
$$
f_j(s) = \binom{\ell-1}{j} \left( 1 - \sqrt{\frac{s}{\tau-1}} \right)^j \sqrt{\frac{s}{\tau-1}}^{\ell -j}.
$$
Throughout what follows, we shall set $v=v_s = \sqrt{s/(\tau-1)}$, so
$$
f_j(s) = \binom{\ell-1}{j} (1-v)^j v^{\ell-j}.
$$

We note that $v_{\tau-1} =1$, and so $f_j(\tau-1)=0$ for $j\not=0$, while $f_0(\tau-1) = 1$.  We could formally define
the function $f_{-1}$ to be identically~0: the key identity we use for the binomial coefficients is
$\displaystyle \binom{\ell-1}{j-1} = \binom{\ell-1}{j} \frac{j}{\ell-j}$, which indeed entails $\binom{\ell-1}{-1} =0$.
However, we find ourselves having to deal with the case $j=0$ as a boundary case separately anyway, and so we need make no (further)
explicit mention of the case $j=-1$.

We claim that
$$
f_j(s-1) = \frac{\ell-j}{2s} f_{j-1}(s) + \left( 1 - \frac{\ell-j}{2s}\right) f_j(s) + \Big[ f_j(s-1) - f_j(s) + f'_j(s) \Big],
$$
for all $j \ge 1$ and $1 \le s \le \tau-1$.  Our aim will then be to show that the term in square brackets is usually small, and
that this is thus a good approximation to the recurrence satisfied by the $a_j(s)$.
Rearranging the claimed identity, we see that it is equivalent to
\begin{equation} \label{defj}
f'_j(s) = \frac{\ell-j}{2s} (f_j(s) - f_{j-1}(s)).
\end{equation}
To verify this identity, we write
\begin{eqnarray}
f'_j(s) &=& \binom{\ell-1}{j} v'(s) \frac{d}{dv} \left( (1-v)^j v^{\ell-j} \right) \big\vert_{v=v_s} \nonumber\\
&=& \binom{\ell-1}{j} \frac{v}{2s} (1-v)^{j-1} v^{\ell-j-1} \left((\ell-j)(1-v) - jv\right) \label{fprime} \\
&=& \frac{\ell-j}{2s} (1-v)^{j-1} v^{\ell-j} \left( \binom{\ell-1}{j} (1-v) - \frac{j}{\ell-j} \binom{\ell-1}{j} v \right) \nonumber\\
&=& \frac{\ell-j}{2s} \left( \binom{\ell-1}{j} (1-v)^j v^{\ell-j} - \binom{\ell-1}{j-1} (1-v)^{j-1} v^{\ell-j+1} \right)\nonumber\\
&=& \frac{\ell-j}{2s} (f_j(s) - f_{j-1}(s)).\nonumber
\end{eqnarray}

Equation (\ref{defj}) demonstrates that the $f_j(s)$ are the analogues to the $a_j(s)$ for a continuous time version of the preferential
attachment process.  In this continuous time version, at time $s$, each vertex of degree $k$ attracts a new edge (whose other endpoint
is a new vertex of degree~1) at rate $k/2s$, independent of the degrees of other vertices.  The degree of a given vertex is then a
pure birth process with this transition rate.  The probability that a vertex with degree $\ell-j$ at time $s$ has degree $\ell$ at time
$\tau-1$ satisfies the differential equation (\ref{defj}), as well as the boundary condition $f_j(\tau-1) = \delta_{j0}$.

It seems intuitively plausible that the difference $e_j(s) = f_j(s) - a_j(s)$ between the continuous and the discrete ``solutions''
will always be small.  Indeed we shall prove the following lemma, which is very crude in most ranges.

\begin{lemma} \label{lem:ejs}
For all $\ell \ge 8$ and $0 \le j \le \ell -1$, we have:
$$
|e_j(s)| \le \begin{cases} \frac{2200\ell}{\tau-1} & (\tau-1)/\ell^2 < s \le \tau-1 \mbox{ or } j \le \ell-2 \\
\frac{800\ell^{3/2}}{\tau-1} & (\tau-1) /\ell^3 <  s \le (\tau-1) /\ell^2 \\
1 & \tau_0 \le s \le (\tau-1)/\ell^3. \end{cases}
$$
\end{lemma}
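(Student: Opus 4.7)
We set up a backward linear recurrence for the error $e_j(s) = f_j(s) - a_j(s)$, treat the small discretisation mismatch between the ODE satisfied by $f_j$ and the recurrence satisfied by $a_j$ as a forcing term, and then carefully sum the accumulated error in each of the three regimes.

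Adopting the convention $e_{-1}(s)\equiv 0$ and subtracting the $a_j$ recurrence from the reformulated version of (\ref{defj}) displayed just above the lemma, we obtain
$$
e_j(s-1) = \frac{\ell-j}{2s}\, e_{j-1}(s) + \left(1 - \frac{\ell-j}{2s}\right) e_j(s) + \varepsilon_j(s),
$$
for $\tau_0 \le s \le \tau-1$ and $0 \le j \le \ell-1$, with terminal data $e_j(\tau-1)=0$ and forcing $\varepsilon_j(s) = f_j(s-1) - f_j(s) + f'_j(s)$. By Taylor's theorem, $|\varepsilon_j(s)| \le \tfrac12 \sup_{u\in[s-1,s]} |f''_j(u)|$. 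Differentiating (\ref{defj}) once more and using $\binom{\ell-1}{j-1} = \frac{j}{\ell-j}\binom{\ell-1}{j}$, one checks that $f''_j(u)$ decomposes into a bounded number of terms of the same shape as $f_{j'}(u)$ with $|j'-j|\le 1$, each multiplied by a factor of order $\ell/u^2$ (and, when $j\le \ell-2$, gaining an additional factor of $v_u^2$).

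Iterating the recurrence yields a Duhamel representation
$$
-e_j(s) = \sum_{t>s}\,\sum_{j'\le j} c_{j,j'}(s,t)\, \varepsilon_{j'}(t),
$$
where $c_{j,j'}(s,t)$ is the probability, in the continuous-time preferential attachment process, that a vertex of degree $\ell-j$ at time $s$ has degree $\ell-j'$ at time $t$; in particular $\sum_{j'} c_{j,j'}(s,t) \le 1$, and the coefficients $c_{j,j'}(s,t)$ themselves share the Gaussian-type peak structure of $f_{j-j'}(t)$ (with $\tau-1$ replaced by $t$).

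The bulk of the work is estimating this sum in the three ranges. When $s>(\tau-1)/\ell^2$, or whenever $j\le \ell-2$, the factor $v_t^{\ell-j'} \ge v_t^2$ supplies enough extra decay to absorb $\sum_t 1/t^2$, giving the $O(\ell/(\tau-1))$ bound. When $(\tau-1)/\ell^3 < s\le (\tau-1)/\ell^2$ and $j=\ell-1$, the forcing $|f''_{\ell-1}(t)|$ is peaked near $v_t \sim 1/\ell$ with Gaussian width of order $\ell^{-3/2}(\tau-1)^{1/2}$; summing this peak against the propagator yields an extra factor of $\sqrt\ell$ relative to the first range, and hence the $O(\ell^{3/2}/(\tau-1))$ bound. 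For $s\le (\tau-1)/\ell^3$ the inequality $|e_j(s)|\le 1$ is immediate since both $a_j(s)$ and $f_j(s)$ lie in $[0,1]$. I expect the main obstacle to be the middle range, where the propagator and the peak of the forcing overlap substantially, so the crude estimate $\sum c\cdot \sup|f''|$ loses a factor of $\sqrt\ell$; extracting the correct constant requires a discrete convolution-type bound that sits at the heart of the lemma.
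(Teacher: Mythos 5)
Your setup matches the paper's: the same forcing recurrence $e_j(s-1)=\frac{\ell-j}{2s}e_{j-1}(s)+\bigl(1-\frac{\ell-j}{2s}\bigr)e_j(s)\pm\bigl[f_j(s-1)-f_j(s)+f'_j(s)\bigr]$, the Taylor bound $|\varepsilon_j(s)|\le\frac12\sup|f''_j|$, and the trivial treatment of the range $s\le(\tau-1)/\ell^3$. But from there the argument has genuine gaps. First, the assertion that $f''_j(u)$ "decomposes into a bounded number of terms of the same shape as $f_{j'}(u)$ \ldots each multiplied by a factor of order $\ell/u^2$" is not a routine check: writing $f''_j(s)=\frac{1}{4s^2}\binom{\ell-1}{j}(1-v)^{j-2}v^{\ell-j}\bigl[\{(\ell-1)(1-v)-j\}^2-(1-v)^2-vj\bigr]$, the bracket is of order $\ell^2$ away from the peak of $(1-v)^jv^{\ell-j}$ and only of order $j(\ell-j)/\ell$ near it; quantifying this trade-off (via Stirling-type bounds on $\binom{\ell-1}{j}(1-v_*)^jv_*^{\ell-j}$ and a case analysis in the displacement from the peak) is precisely the paper's Lemma~\ref{lem:f''}, which yields $|f''_j(s)|\le\frac{140}{s(\tau-1)}\frac{(\ell-1)^{5/2}}{j^{3/2}(\ell-j)^{1/2}}$, and your sketch simply assumes a version of it. Second, and more seriously, the summation of your Duhamel series is not carried out, and the mechanism you name does not suffice: bounding the propagator row sums by $1$ and summing a pointwise-in-$t$ forcing bound of size $\Theta\bigl(\ell/(t(\tau-1))\bigr)$ gives $\frac{\ell}{\tau-1}\log\frac{\tau}{s}$, a logarithmic (or, without the $v_t^2$ gain, polynomial) loss over the claimed $2200\,\ell/(\tau-1)$, which must hold down to $s$ of order a constant when $j\le\ell-2$. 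Removing that loss requires using that the chain spends only about $2s/(\ell-j')$ steps at degree $\ell-j'$, so each level contributes about $\frac{140\,\ell^{5/2}}{(\tau-1)\,j'^{3/2}(\ell-j')^{3/2}}$ in total, summable over $j'$ — exactly the "discrete convolution-type bound" you defer. Third, the range $j=\ell-1$, $(\tau-1)/\ell^3<s\le(\tau-1)/\ell^2$ is explicitly left open. (A small further point: the Duhamel weights are transition probabilities of the \emph{discrete} chain, not the continuous-time one — harmless for the $\le 1$ bound, but the identification should be stated correctly.)

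For comparison, the paper closes these gaps without any propagator summation: it proves the $f''$ bound above (with a separate, weaker bound $|f''_{\ell-1}(s)|\le\ell^{3/2}/s(\tau-1)$ on the middle range), and then runs a double induction on $j$ and $\tau-s$ with the constants $C_j=1+140\sum_{i=1}^{j}\bigl(\ell/(i(\ell-i))\bigr)^{3/2}$, chosen so that the forcing at level $j$ is absorbed by the telescoping term $\frac{\ell-j}{2s}(C_j-C_{j-1})\frac{\ell}{\tau-1}$ at every step; the case $j=0$ is handled separately by comparing the explicit product formulas for $a_0(s)$ and $f_0(s)$, and the middle-range bound for $j=\ell-1$ is fed into the same induction, producing the extra $\ell^{3/2}/(\tau-1)$ term. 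Your route could in principle be completed, but the two estimates you leave open (the precise $f''_j$ bound and the level-by-level summation replacing the crude $\sum_t\sup|f''|$) are the entire substance of the lemma, so as it stands the proposal is a plan rather than a proof.
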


We shall defer the proof of Lemma~\ref{lem:ejs} to the next section.

\smallskip

We set
$$
\Psi^\ell_{\tau-1}(Y) = 4 \sum_{s=\tau_0}^{\tau-1}
f_{\ell-1}(s)^2 + 4 \sum_{s=\tau_0}^{\tau-1} \sum_{k=1}^{\ell} P_s (D_s,D_s+y_k) (f_{\ell-k}(s) - f_{\ell-k-1}(s))^2 .
$$
We now show that the bound in Lemma~\ref{lem:ejs} suffices to show that $\Phi^\ell_{\tau-1}(Y)$ is not much
larger than $\Psi^\ell_{\tau-1}(Y)$.

\begin{lemma} \label{PhivPsi}
For any $\ell$ and $\tau$, with $8 \le \ell \le 2\psi \sqrt{\tau-1}$,
$$
\Phi^\ell_{\tau-1}(Y) \le \Psi^\ell_{\tau-1}(Y) + 5 \times 10^8 \psi^2 + 20 \frac{\tau-1}{\ell^3}.
$$
\end{lemma}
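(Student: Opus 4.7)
The plan is to write $a_j(s)=f_j(s)-e_j(s)$ and use the elementary inequality $(x+y)^2\le 2x^2+2y^2$ (applied once to the ``squared values'' and twice to the ``squared differences'') to split each term in the bound~(\ref{bound}) for $\Phi^\ell_{\tau-1}(Y)$ into a part matching $\Psi^\ell_{\tau-1}(Y)$ plus an error depending on the $e_j(s)$. Concretely, from $a_{\ell-1}(s)^2\le 2 f_{\ell-1}(s)^2+2 e_{\ell-1}(s)^2$ and
$$
\bigl(a_{\ell-k}(s)-a_{\ell-k-1}(s)\bigr)^2\le 2\bigl(f_{\ell-k}(s)-f_{\ell-k-1}(s)\bigr)^2+4 e_{\ell-k}(s)^2+4 e_{\ell-k-1}(s)^2,
$$
the bound~(\ref{bound}) yields
$$
\Phi^\ell_{\tau-1}(Y)\le \Psi^\ell_{\tau-1}(Y)+4\sum_{s=\tau_0}^{\tau-1}e_{\ell-1}(s)^2+8\sum_{s=\tau_0}^{\tau-1}\sum_{k=1}^{\ell}P_s(D_s,D_s+y_k)\bigl(e_{\ell-k}(s)^2+e_{\ell-k-1}(s)^2\bigr).
$$
The whole task then is to bound the two error sums using Lemma~\ref{lem:ejs}.

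For the inner $k$-sum I would use the key structural observation that $e_{\ell-1}(s)$ appears in the double sum only via the single $k=1$ term (and never in the second group $e_{\ell-k-1}(s)$ since $\ell-k-1\le \ell-2$ for $k\ge 1$). For every other index $j\le \ell-2$, Lemma~\ref{lem:ejs} gives the uniform bound $|e_j(s)|\le 2200\ell/(\tau-1)$, valid for all $s$. Since $\sum_k P_s(D_s,D_s+y_k)\le 1$ for each fixed $s$, these contributions are each at most $(2200\ell/(\tau-1))^2$. Summing over $s\le \tau-1$ gives a total contribution of order $\ell^2/(\tau-1)$, which, by the hypothesis $\ell\le 2\psi\sqrt{\tau-1}$, is at most $4\cdot 2200^2\psi^2$.

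The only delicate piece is $\sum_{s}e_{\ell-1}(s)^2$, for which Lemma~\ref{lem:ejs} supplies three different bounds depending on $s$. I would split this sum according to the three regimes from the lemma:
\begin{itemize}
\item for $\tau_0\le s\le (\tau-1)/\ell^3$ the bound is $|e_{\ell-1}(s)|\le 1$, but there are at most $(\tau-1)/\ell^3$ such $s$, contributing at most $(\tau-1)/\ell^3$;
\item for $(\tau-1)/\ell^3<s\le (\tau-1)/\ell^2$ the bound is $800\ell^{3/2}/(\tau-1)$, which, when squared and summed over the $O((\tau-1)/\ell^2)$ relevant values of $s$, gives a negligible $O(\ell/(\tau-1))$;
\item for $s>(\tau-1)/\ell^2$ the bound is $2200\ell/(\tau-1)$, contributing another term of order $\ell^2/(\tau-1)\le 4\psi^2$.
\end{itemize}
Collecting the contributions (with the coefficient $4$ in front of $\sum_s e_{\ell-1}(s)^2$ and the coefficient $8$ in front of the double sum, which via the observation above boils down to another copy of $\sum_s e_{\ell-1}(s)^2$ plus the uniform-bound term) and keeping track of the constants yields the claimed $5\times 10^8\psi^2 + 20(\tau-1)/\ell^3$.

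The main obstacle is purely bookkeeping: the argument itself is just ``substitute $a=f-e$ and bound the error,'' but one has to arrange the three regimes for $e_{\ell-1}(s)$ carefully so that the factor $\ell^2/(\tau-1)$ (which is where the hypothesis $\ell\le 2\psi\sqrt{\tau-1}$ is absorbed into $\psi^2$) and the factor $(\tau-1)/\ell^3$ (coming from the short initial range where $e_{\ell-1}$ is only known to be bounded by $1$) appear with the right constants. No new probabilistic input is needed beyond Lemma~\ref{lem:ejs}.
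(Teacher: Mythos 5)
Your proposal is correct and is essentially the paper's own proof: the same substitution $a_j=f_j-e_j$ with $(x+y)^2\le 2x^2+2y^2$, giving $\Phi^\ell_{\tau-1}(Y)\le \Psi^\ell_{\tau-1}(Y)+4\sum_s e_{\ell-1}(s)^2+8\sum_s\sum_k P_s(D_s,D_s+y_k)\bigl(e_{\ell-k}(s)^2+e_{\ell-k-1}(s)^2\bigr)$, the same three-regime application of Lemma~\ref{lem:ejs}, and the same absorption of $\ell^2/(\tau-1)\le 4\psi^2$ via the hypothesis $\ell\le 2\psi\sqrt{\tau-1}$. One bookkeeping caution: splitting the double sum into the $k=1$ term plus the rest spends the probability budget $\sum_k P_s\le 1$ twice (and your rest-bound also omits the factor $2$ coming from the two squared errors per $k$), so with the factor restored your dominant term comes out near $5.5\times 10^8\psi^2$; the paper instead bounds both error terms at every $s$ by the worst-case regime bound, so the double sum contributes at most twice the bound used for $\sum_s e_{\ell-1}(s)^2$, giving $20$ copies in total and hence $4.8\times 10^8\psi^2+20(\tau-1)/\ell^3$, which is how the stated constants are met.
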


\begin{proof}
Equation (\ref{bound}) tells us that $\Phi^\ell_{\tau-1}(Y)$ is at most
$$
2 \sum_{s=\tau_0}^{\tau-1} a_{\ell-1}(s)^2 +
2 \sum_{s=\tau_0}^{\tau-1} \sum_{k=1}^{\ell} P_s (D_s,D_s+y_k) (a_{\ell-k}(s) - a_{\ell-k-1}(s))^2 .
$$
Using the inequalities $a_j(s)^2 \le 2f_j(s)^2 + 2e_j(s)^2$ and
$$
(a_j(s)-a_{j-1}(s))^2 \le 2(f_j(s) - f_{j-1}(s))^2 + 4 e_j(s)^2 + 4 e_{j-1}(s)^2,
$$
we deduce that
\begin{eqnarray*}
\Phi^\ell_{\tau-1}(Y) &\le& \Psi^\ell_{\tau-1}(Y) + 4 \sum_{s=\tau_0}^{\tau-1} e_{\ell-1}(s)^2 \\
&& \mbox{} + 8 \sum_{s=\tau_0}^{\tau-1} \sum_{k=1}^\ell P_s(D_s,D_s+y_k) \left( e_{\ell-k}(s)^2 + e_{\ell-k-1}(s)^2 \right).
\end{eqnarray*}

Now we apply the bounds from Lemma~\ref{lem:ejs}:
\begin{eqnarray*}
\sum_{s=\tau_0}^{\tau-1} e_{\ell-1}(s)^2 &\le& (\tau-1) \left( \frac{2200 \ell}{\tau-1} \right)^2 +
\frac{\tau-1}{\ell^2} \left( \frac{800\ell^{3/2}}{\tau-1} \right)^2 + \frac{\tau-1}{\ell^3} \\
&\le& \frac{5\times 10^6 \ell^2}{\tau-1} + \frac{10^6 \ell}{\tau-1} + \frac{\tau-1}{\ell^3} \\
&\le& \frac{6\times 10^6 \ell^2}{\tau-1} + \frac{\tau-1}{\ell^3},
\end{eqnarray*}
and similarly
\begin{eqnarray*}
\lefteqn{\sum_{s=\tau_0}^{\tau-1} \sum_{k=1}^{\ell} P_s (D_s,D_s+y_k) \left( e_{\ell-k}(s)^2 + e_{\ell-k-1}(s)^2 \right)}\\
&\le& 2 \left( (\tau-1) \left( \frac{2200 \ell}{\tau-1} \right)^2 +
\frac{\tau-1}{\ell^2} \left( \frac{800\ell^{3/2}}{\tau-1} \right)^2 + \frac{\tau-1}{\ell^3} \right)\\
&\le& 2 \left( \frac{6\times 10^6 \ell^2}{\tau-1} + \frac{\tau-1}{\ell^3} \right).
\end{eqnarray*}
Therefore
\begin{eqnarray*}
\Phi^\ell_{\tau-1}(Y) &\le& \Psi^\ell_{\tau-1}(Y) + 20 \left( \frac{6\times 10^6 \ell^2}{\tau-1} + \frac{\tau-1}{\ell^3} \right) \\
&\le & \Psi^\ell_{\tau-1}(Y) + \frac{120 \times 10^6 \ell^2}{\tau-1} + 20 \frac{\tau-1}{\ell^3} \\
&\le & \Psi^\ell_{\tau-1}(Y) + 5 \times 10^8 \psi^2 + 20 \frac{\tau-1}{\ell^3},
\end{eqnarray*}
as claimed.
\end{proof}

For $k = 1, \dots, \ell$, we have that $P_s(D_s,D_s+y_k) = k D_s(k) / 2(s-1)$, since each of the $D_s(k)$ vertices of
degree~$k$ has probability $k/2(s-1)$ of receiving an extra edge at time $s+1$.
Therefore
$$
\Psi^\ell_{\tau-1}(Y) = 4 \sum_{s=\tau_0}^{\tau-1} f_{\ell-1}(s)^2 +
2 \sum_{s=\tau_0}^{\tau-1} \sum_{k=1}^{\ell} \frac{k D_s(k)}{s-1} (f_{\ell-k}(s) - f_{\ell-k-1}(s))^2 .
$$
The double sum is the main term here, and we mainly concentrate on this; we will obtain adequate bounds
on $\sum_s f_{\ell-1}(s)^2$ as a byproduct of our estimates.

\smallskip

Recall our assumptions (\ref{dsk}) that $D_s(k) = 0$ for all $k > \psi \sqrt {s-1}$, and that
$\displaystyle D_s(k) \le 5\frac{s}{k^3} + 400 \psi^2 \log(\psi s)$ for all $k=1, \dots, \ell_0$ with $k \le \psi \sqrt{s-1}$.
Using these bounds, we find that, for all $k =1, \dots, \ell_0$,
and all $s \ge 4$,
$$
\frac{k D_s(k)}{s-1} \le \frac{7}{k^2} + 550 \psi^2 \log (\psi s) \frac{k}{s}.
$$
Thus we have
\begin{eqnarray*}
\lefteqn{\Psi^\ell_{\tau-1}(Y) \le 4 \sum_{s=\tau_0}^{\tau-1} f_{\ell-1}(s)^2 }\\
&&\mbox{} + 14 \sum_{s=\tau_0}^{\tau-1} \sum_{k=1}^{\ell} \frac{1}{k^2} (f_{\ell-k}(s) - f_{\ell-k-1}(s))^2 \\
&&\mbox{} + 1100 \psi^2 \log(\psi\tau)
\sum_{s=\tau_0}^{\tau-1} \sum_{k=1}^\ell \frac{k}{s} (f_{\ell-k}(s) - f_{\ell-k-1}(s))^2.
\end{eqnarray*}
We define
\begin{eqnarray*}
Q_1(\tau,\ell) &=& \sum_{s=\tau_0}^{\tau-1} \sum_{k=1}^\ell \frac{1}{k^2} \left( f_{\ell-k}(s) - f_{\ell-k-1}(s) \right)^2 \\
Q_2(\tau,\ell) &=& \sum_{s=\tau_0}^{\tau-1} \sum_{k=1}^\ell
\frac{k}{s} \left( f_{\ell-k}(s) - f_{\ell-k-1}(s) \right)^2
\end{eqnarray*}
so that
\begin{equation} \label{PsivQi}
\Psi^\ell_{\tau-1}(Y)
\le 4 \sum_{s=\tau_0}^{\tau-1} f_{\ell-1}(s)^2 + 14 Q_1(\tau,\ell) + 1100 \psi^2 \log(\psi\tau) Q_2(\tau,\ell).
\end{equation}

\smallskip

To estimate $Q_1$, we exchange the order of summation and substitute $j=\ell-k$:
$$
Q_1(\tau,\ell) = \sum_{j=0}^{\ell-1} \frac{1}{(\ell-j)^2} \sum_{s=\tau_0}^{\tau-1} \left( f_j(s) - f_{j-1}(s) \right)^2.
$$
From (\ref{defj}) and (\ref{fprime}), we have
$$
f_j(s) - f_{j-1}(s) = \binom{\ell-1}{j} \frac{1}{\ell-j} (1-v)^{j-1} v^{\ell-j} \left( \ell(1-v) - j \right),
$$
where $v = v_s = \sqrt{s/(\tau-1)}$, as before.  We estimate the sum over $s$ by approximating it by the integral
$$
\int_{s=\tau_0}^{\tau-1} \binom{\ell-1}{j}^2 \frac{1}{(\ell-j)^2}(1-v)^{2j-2} v^{2\ell-2j} \left(\ell(1-v)-j\right)^2 \, ds.
$$
The integrand here is bounded above by~1, since each $f_j(s)$ is at most~1.  The function
$(1-v)^{j-1} v^{\ell-j} \left( \ell(1-v) - j \right)$ has derivative which is a positive multiple of a quadratic function of
$v$, so the function has just two stationary points, one either side of the zero $v = (\ell-j)/\ell$.  Therefore the integrand,
which is a positive multiple of the square of this function, has two local maxima.  The sum is then at most the value of the
integral plus the values of the integrand at the two local maxima, and so
\begin{eqnarray*}
\lefteqn{Q_1(\tau,\ell) \le 2 \sum_{j=0}^{\ell-1} \frac{1}{(\ell-j)^2}} \\
&& \mbox{} + \sum_{j=0}^{\ell-1} \frac{1}{(\ell-j)^4} \binom{\ell-1}{j}^2
\int_{s=\tau_0}^{\tau-1} (1-v)^{2j-2} v^{2\ell-2j} \left( \ell(1-v)-j \right)^2 \, ds \\
&\le & 4 + \sum_{j=0}^{\ell-1} \frac{2(\tau-1)}{(\ell-j)^4} \binom{\ell-1}{j}^2
\int_{v=0}^1 (1-v)^{2j-2} v^{2\ell-2j+1} (\ell (1-v) - j)^2 \, dv.
\end{eqnarray*}
In the last line, we changed variable: recall that $s=v^2(\tau-1)$.

We write
$$
Q_1(\tau,\ell) \le 4 + \sum_{j=0}^{\ell-1} \frac{2(\tau-1)}{(\ell-j)^4} \binom{\ell-1}{j}^2 I(\ell,j,1),
$$
where
$$
I(\ell,j,\alpha) = \int_{v=0}^1 (1-v)^{2j-2} v^{2\ell-2j + \alpha} \left( \ell(1-v) - j \right)^2 \, dv,
$$
for positive integers $\ell$ and $j$, and integer $\alpha$, where $\ell > j$ and $\alpha \ge -1$.

The integral above can be evaluated as a sum of Beta functions.  We will be confronted by a very similar integral
when estimating $Q_2$, and it is convenient to prove a lemma covering both cases (here we need $\alpha =1$ and later
we shall take $\alpha=-1$).

\begin{lemma} \label{lem:beta}
For integers $\ell $ and $j$ with $\ell > j \ge 0$, and integer $\alpha\ge-1$,
$$
I(\ell,j,\alpha) \le \frac{(2\ell-2j+\alpha)! (2j-2)!}{(2\ell+\alpha+1)!} j\ell \left\{ 2(\ell-j+1) + \alpha (3+\alpha)\right\}
\quad (j \ge 1)
$$
and
$$
I(\ell,0,\alpha) \le \frac{\ell}{2}.
$$
\end{lemma}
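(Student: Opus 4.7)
The plan is to expand the squared factor in the integrand and evaluate the resulting integrals as standard Beta integrals. Once all three pieces are written over a common factorial denominator, the entire expression collapses to something very close to the claimed bound, and a short inequality argument closes the gap.

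First, I would expand
$$
(\ell(1-v)-j)^2 = \ell^2(1-v)^2 - 2\ell j (1-v) + j^2,
$$
so that, writing $B(p,q)=\int_0^1 v^{p-1}(1-v)^{q-1}\,dv = (p-1)!(q-1)!/(p+q-1)!$ for positive integers $p,q$, one obtains for $j\ge 1$
$$
I(\ell,j,\alpha) = \ell^2 B(2\ell-2j+\alpha+1,\,2j+1) - 2\ell j\, B(2\ell-2j+\alpha+1,\,2j) + j^2 B(2\ell-2j+\alpha+1,\,2j-1).
$$
Setting $C = (2\ell-2j+\alpha)!(2j-2)!/(2\ell+\alpha+1)!$ and using the identities
$B(2\ell-2j+\alpha+1,2j+1)=C\cdot 2j(2j-1)$,
$B(2\ell-2j+\alpha+1,2j)=C\cdot (2j-1)(2\ell+\alpha+1)$,
$B(2\ell-2j+\alpha+1,2j-1)=C\cdot (2\ell+\alpha)(2\ell+\alpha+1)$,
I would factor out $C$ and expand. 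After cancellation of the $\ell^2 j^2$ and the $(\alpha+1)$--terms (the $-4j^2\ell(\alpha+1)$ piece from the middle summand cancels against $4j^2\ell\alpha+2j^2\ell$ from the last), the expression reduces cleanly to
$$
I(\ell,j,\alpha) = C\,\bigl[\,2j\ell(\ell-j+\alpha+1) + j^2\alpha(\alpha+1)\,\bigr].
$$

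To reach the stated bound, I need to compare $2j\ell\alpha + j^2\alpha(\alpha+1)$ with $j\ell\alpha(\alpha+3)=j\ell\alpha^2+3j\ell\alpha$, i.e.\ to verify
$$
j^2\alpha(\alpha+1) \le j\ell\alpha(\alpha+1).
$$
This is trivial in the three boundary cases $\alpha=-1$, $\alpha=0$ (both sides zero), and otherwise follows from $\alpha(\alpha+1)\ge 0$ for integer $\alpha\ge 1$ together with the hypothesis $j\le \ell$. Adding $2j\ell(\ell-j+1)$ to both sides gives the claimed inequality.

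For the separate case $j=0$, the factor $(1-v)^{2j-2}$ absorbs into the square $(\ell(1-v))^2$ leaving
$$
I(\ell,0,\alpha) = \ell^2 \int_0^1 v^{2\ell+\alpha}\,dv = \frac{\ell^2}{2\ell+\alpha+1},
$$
which is at most $\ell/2$ precisely because $\alpha\ge -1$.

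The only mildly delicate step is the cancellation in the expansion: the $4j^2\ell^2$ pieces must cancel exactly, and one has to track the $(\alpha+1)$ factors carefully so that the residue is the very simple $2j\ell(\ell-j+\alpha+1)+j^2\alpha(\alpha+1)$. Everything else is either a direct Beta integral evaluation or an elementary inequality.
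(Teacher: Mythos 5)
Your proposal is correct and follows essentially the same route as the paper: expand $(\ell(1-v)-j)^2$, evaluate the three resulting Beta integrals over the common factorial factor $C$, simplify the polynomial (your form $2j\ell(\ell-j+\alpha+1)+j^2\alpha(\alpha+1)$ agrees with the paper's $j\{2\ell(\ell-j+1)+\alpha(j(1+\alpha)+2\ell)\}$), and close with the comparison $j^2\alpha(\alpha+1)\le j\ell\alpha(\alpha+1)$ using $j<\ell$ and $\alpha(\alpha+1)\ge 0$, exactly as in the paper; the $j=0$ case is handled identically.
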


\begin{proof}
For non-negative integers $a$ and $b$, we have the identity
$$
\int_{v=0}^1 (1-v)^a v^b \, dv = B(a+1,b+1) = \frac{a! \, b!}{(a+b+1)!},
$$
where $B(\cdot,\cdot)$ denotes the Beta function.

For $j\ge 1$, the required integral can be written as a sum of three integrals of the form above, and we obtain
\begin{eqnarray*}
\lefteqn{I(\ell,j,\alpha)}\\
&=& (2\ell -2j+\alpha)! \left( \ell^2 \frac{(2j)!}{(2\ell+\alpha+1)!} - 2 \ell j \frac{(2j-1)!}{(2\ell+\alpha)!}
+ j^2 \frac{(2j-2)!}{(2\ell+\alpha -1)!} \right) \\
&=& (2\ell-2j+\alpha)! \frac{(2j-2)!}{(2\ell+\alpha +1)!} \\
&& \mbox{} \times \left\{ \ell^2(2j-1)(2j) - 2j \ell(2j-1)(2\ell+\alpha+1) + j^2 (2\ell+\alpha+1)(2\ell+\alpha) \right\} \\
&=& \frac{(2\ell-2j+\alpha)! (2j-2)!}{(2\ell+\alpha+1)!} j \left\{ 2\ell(\ell-j+1) + \alpha(j(1+\alpha) + 2\ell) \right\}\\
&\le& \frac{(2\ell-2j+\alpha)! (2j-2)!}{(2\ell+\alpha+1)!} j\ell \left\{ 2(\ell-j+1) + \alpha (3+\alpha)\right\},
\end{eqnarray*}
as claimed.

For $j=0$, we have
$$
I(\ell, 0, \alpha) = \ell^2 \int_{v=0}^1 v^{2\ell +\alpha} \, dv = \frac{\ell^2}{2\ell + \alpha + 1} \le \frac{\ell}{2},
$$
for all $\alpha \ge -1$, also as claimed.
\end{proof}

Lemma~\ref{lem:beta}, with $\alpha = 1$, tells us that
\begin{eqnarray*}
\lefteqn{Q_1(\tau,\ell) \le 4 + 2\frac{\tau-1}{\ell^4} I(\ell,0,1)} \\
&& \mbox{} + 2(\tau-1) \sum_{j=1}^{\ell-1} \frac{j\ell\{2(\ell-j+1)+4\}} {(\ell-j)^4}
\binom{\ell-1}{j}^2 \frac{(2\ell-2j+1)!\, (2j-2)!}{(2\ell+2)!}\\
&=&
4 + \frac{\tau-1}{\ell^3} + 4(\tau-1) \sum_{j=1}^{\ell-1} \frac{j\ell(\ell-j+3)} {(\ell-j)^4}
\frac{(\ell-1)!^2}{(2\ell+2)!}
\frac{(2\ell-2j+1)!}{(\ell-j-1)!^2} \frac{(2j-2)!}{j!^2}\\
&=& 4 + \frac{\tau-1}{\ell^3} + 4(\tau-1) \sum_{j=1}^{\ell-1} \binom{2\ell}{\ell}^{-1} \frac{1}{\ell (2\ell+2)(2\ell+1)} \\
&&\mbox{} \qquad \qquad \qquad \qquad \binom{2\ell-2j}{\ell-j}
\frac{(\ell-j+3) (2\ell-2j+1)}{(\ell-j)^2} \binom{2j}{j} \frac{j}{2j(2j-1)}.
\end{eqnarray*}

This is the first of several occasions in the paper where we use the inequalities
$$
\frac{2^{2x}}{2\sqrt x} \le \binom{2x}{x} \le \frac{2^{2x}}{\sqrt{x+1}};
$$
the first is valid for all integers $x \ge 1$, and the second for all integers $x \ge 0$.  Sometimes, as below,
we use simply that $\displaystyle \binom{2x}{x} \le \frac{2^{2x}}{\sqrt x}$.

We obtain
\begin{eqnarray*}
Q_1(\tau,\ell) &\le& 4 + \frac{\tau-1}{\ell^3} + 4(\tau-1) \sum_{j=1}^{\ell-1} \frac{2\sqrt \ell}{2^{2\ell}} \frac{1}{4\ell^3}
\frac{2^{2\ell -2j}} {\sqrt{\ell-j}} 12 \frac{2^{2j}}{\sqrt j} \frac{1}{2j} \\
&=& 4 + \frac{\tau-1}{\ell^3} + \frac{12(\tau-1)}{\ell^3} \sum_{j=1}^{\ell-1} \frac{1}{j^{3/2}}
\sqrt{\frac{\ell}{\ell-j}}.
\end{eqnarray*}

To estimate the sum appearing above, we use the numerical value $\sum_{j=1}^\infty j^{-3/2} \le 2.61238$, and the crude bound
$$
\sum_{j=1}^{\ell-1} \frac{1}{j^{3/2}} \sqrt{\frac{\ell}{\ell-j}} \le 2 \sum_{j=1}^{\ell/2} \frac{1}{j^{3/2}} \sqrt{\frac{\ell}{\ell-j}}
\le 2 \sqrt 2 \sum_{j=1}^\infty \frac{1}{j^{3/2}} \le 8,
$$
and obtain
\begin{equation} \label{Q1}
Q_1(\tau,\ell) \le 4 + \frac{100(\tau-1)}{\ell^3}.
\end{equation}

\smallskip

The next step is to estimate
\begin{eqnarray*}
Q_2(\tau,\ell) &=& \sum_{s=\tau_0}^{\tau-1} \sum_{k=1}^\ell \frac{k}{s} \left( f_{\ell-k}(s) - f_{\ell-k-1}(s) \right)^2 \\
&\le& \sum_{j=0}^{\ell-1} (\ell-j)
\sum_{s=\tau_0}^{\tau-1} \frac{1}{s} \left( f_j(s) - f_{j-1}(s) \right)^2.
\end{eqnarray*}
As before, we shall start by fixing $j$, and estimating the sum over $s$ by the integral
\begin{eqnarray*}
&& \int_{s=\tau_0}^{\tau-1} \frac{1}{s} \left( f_j(s) - f_{j-1}(s) \right)^2 \, ds \\
&& \mbox{} \le \binom{\ell-1}{j}^2 \frac{1}{(\ell-j)^2} \int_{v=0}^1 \frac{(1-v)^{2j-2} v^{2\ell-2j}(\ell(1-v)-j)^2}{v^2(\tau-1)}
2(\tau-1)v \, dv \\
&& \mbox{} = 2 \binom{\ell-1}{j}^2 \frac{1}{(\ell-j)^2} I(\ell,j,-1).
\end{eqnarray*}
We used the expression for $f_j(s)-f_{j-1}(s)$ derived earlier, and made the substitution
$s=v^2(\tau-1)$.

\smallskip

To bound the difference between the sum $\sum_{s=\tau_0}^{\tau-1} \frac{1}{s} \left( f_j(s) - f_{j-1}(s) \right)^2$ and
the corresponding integral is not completely straightforward.  The integrand $\frac{1}{s} \left( f_j(s) - f_{j-1}(s) \right)^2$
can be written as
$$
H_j(v) = \frac{h_j(v)^2}{\tau-1}, \mbox{ where }
h_j(v) = \frac{1}{\ell-j} \binom{\ell -1}{j} (1-v)^{j-1} v^{\ell-j-1} (\ell(1-v)-j).
$$

The function $h_j(v)$ has stationary points at
$$
v_* = \frac{\ell-j}{\ell} \pm \frac{1}{\ell} \sqrt{ \frac{j(\ell-j)}{\ell-1}}.
$$
Therefore $H_j(v)$ has a global minimum at $v=(\ell-j)/\ell$, and local maxima at the two points $v_*$, and so the
global maximum of $H_j(v)$ is attained at one of the $v_*$.  For $j\ge 1$, we can write
\begin{eqnarray*}
|h_j(v_*)| &=& \frac{\ell-1}{j(\ell-j)} \left[ \binom{\ell-2}{j-1} (1-v_*)^{j-1} v_*^{\ell-j-1} \right] |\ell(1-v_*)-j| \\
&\le& \frac{\ell-1}{j(\ell-j)} \sqrt{\frac{j(\ell-j)}{\ell-1}}\\
&=& \sqrt{\frac{\ell-1}{j(\ell-j)}},
\end{eqnarray*}
where we used the fact that the term in square brackets is the probability that a Binomial random variable with parameters
$(\ell-2,v_*)$ takes the value $\ell-j-1$, and is therefore at most~1.
Hence we have
$$
H_j(v) \le \frac{\ell-1}{j(\ell-j)} \frac{1}{\tau-1},
$$
for all $j\ge 1$ and all $v$.
For $j=0$, the maximum value of $h_j(v)$ is $1$, and thus $H_0(v)$ is at most $\frac{1}{\tau-1}$ for all $v$.

We have that
\begin{eqnarray*}
Q_2(\tau,\ell) &\le& \sum_{j=0}^{\ell-1} (\ell-j) \left[ 2 \binom{\ell-1}{j}^2 \frac{1}{(\ell-j)^2} I(\ell,j,-1) + 2 \max_v H_j(v) \right] \\
&\le& 2 \sum_{j=0}^{\ell-1} \binom{\ell-1}{j}^2 \frac{1}{\ell-j} I(\ell,j,-1) +
\frac{2}{\tau-1} \left\{ \ell + \sum_{j=1}^{\ell-1} \frac{\ell-1}{j} \right\} \\
&\le& 2 \sum_{j=0}^{\ell-1} \binom{\ell-1}{j}^2 \frac{1}{\ell-j} I(\ell,j,-1) + \frac{2\ell^2}{\tau-1}.
\end{eqnarray*}
Now we use the bounds for $I(\ell,j,-1)$ from Lemma~\ref{lem:beta}.  We also use that
$\ell \le 2\psi \sqrt {\tau-1}$, and $\psi \ge 3$,  to obtain:
\begin{eqnarray}
Q_2(\tau,\ell) &\le& \frac{2\ell^2}{\tau-1} + \frac{1}{\ell} \frac{\ell}{2} \nonumber \\
&&\mbox{} + 2 \sum_{j=1}^{\ell-1} \binom{\ell-1}{j}^2 \frac{1}{\ell-j} \frac{(2\ell-2j-1)!\,(2j-2)!}{(2\ell)!} 2 j\ell (\ell-j) \nonumber \\
&=& 8\psi^2 + \frac{1}{2} + 4 \sum_{j=1}^{\ell-1} \frac{(\ell-1)!^2}{j!^2 (\ell-j-1)!^2}
\frac{(2\ell-2j-1)!\,(2j-2)!}{(2\ell)!} j\ell \nonumber \\
&=& 8\psi^2 + \frac{1}{2} + 4 \sum_{j=1}^{\ell-1} \binom{2\ell}{\ell}^{-1} \frac{1}{\ell} \binom{2\ell-2j-2}{\ell-j-1} (2\ell-2j-1)
\binom{2j-2}{j-1} \frac{1}{j} \nonumber \\
&\le& 8\psi^2 + \frac{1}{2}+ 4 \sum_{j=1}^{\ell-1} \left(\frac{2\ell-2j-1}{j\ell}\right)
\left(\frac{2\sqrt \ell}{2^{2\ell}}\right) \left(\frac{2^{2\ell -2j-2}}{\sqrt{\ell-j}}\right) \left(\frac{2^{2j-2}}{\sqrt j}\right) \nonumber \\
&\le& 8\psi^2 + \frac{1}{2}+ \sum_{j=1}^{\ell-1} \frac{1}{j^{3/2}} \sqrt{\frac{\ell-j}{\ell}} \nonumber\\
&\le& 8\psi^2 + 5 \le 9\psi^2. \label{Q2}
\end{eqnarray}

\smallskip

The next task is to bound the sum
$$
\sum_{s=\tau_0}^{\tau-1} f_{\ell-1}(s)^2 =
\sum_{s=\tau_0}^{\tau-1} (1-v)^{2\ell-2} v^2,
$$
where, as before, $v =\sqrt{s/\tau-1}$.  This sum is bounded above by the integral $\int_{s=0}^{\tau-1} (1-v)^{2\ell-2} v^2 \, dv$,
plus the maximum value of the integrand.  The integral is equal to
$$
2(\tau-1) \int_{v=0}^1 (1-v)^{2\ell-2} v^3 \, dv = 2(\tau-1) \frac{(2\ell-2)! 3!}{(2\ell+2)!} \le \frac{12 (\tau-1)}{(2\ell)^4},
$$
which is more than small enough for our purposes, and the integrand is certainly at most~1, so
\begin{equation} \label{Q3}
\sum_{s=\tau_0}^{\tau-1} f_{\ell-1}(s)^2 \le 1 + \frac{\tau-1}{\ell^4}.
\end{equation}

\smallskip

Finally, we combine all our estimates.  For any $\ell =1, \dots, \ell_0$ we have, by Lemma~\ref{PhivPsi},
(\ref{PsivQi}), (\ref{Q1}), (\ref{Q2}) and (\ref{Q3}), that
\begin{eqnarray*}
\Phi^\ell_{\tau-1}(Y) &\le& \Psi^\ell_{\tau-1}(Y) + 5 \times 10^8 \psi^2 +20 \frac{\tau-1}{\ell^3}\\
&\le & 4 \sum_{s=\tau_0}^{\tau-1} f_{\ell-1}(s)^2 + 14 Q_1(\tau,\ell) +
1100 \psi^2 \log(\psi\tau) Q_2(\tau,\ell) \\
&& \mbox{} + 5 \times 10^8 \psi^2 +20 \frac{\tau-1}{\ell^3}\\
&\le& 4 + \frac{4(\tau-1)}{\ell^4} + 56 + 1400 \frac{\tau-1}{\ell^3} + 9900 \psi^4 \log(\psi\tau) \\
&&\mbox{} + 5 \times 10^8 \psi^2 +20 \frac{\tau-1}{\ell^3}\\
&<& 1600 \frac{\tau-1}{\ell^3} + 10^4\psi^4 \log(\psi\tau).
\end{eqnarray*}

Thus $\Phi^\ell_{\tau-1}(Y) < R^\ell$, as required.  This completes the proof of Lemma~\ref{PhivR}, except for the
proof of Lemma~\ref{lem:ejs}, to which the next section is devoted.

\section{Proof of Lemma~\ref{lem:ejs}} \label{Slemma}

Our aim in this section is to prove the following upper bound on $e_j(s) = f_j(s) - a_j(s)$, to be valid whenever
$\ell \ge 8$, $0\le j \le \ell-1$ and $\tau_0 \le s \le \tau-1$:
$$
|e_j(s)| \le \begin{cases} \frac{2200\ell}{\tau-1} & (\tau-1)/\ell^2 < s \le \tau-1 \mbox{ or } j \le \ell-2 \\
\frac{800\ell^{3/2}}{\tau-1} & (\tau-1) /\ell^3 <  s \le (\tau-1) /\ell^2 \\
1 & \tau_0 \le s \le (\tau-1)/\ell^3. \end{cases}
$$
The final case is straightforward, since both $a_j(s)$ and $f_j(s)$ lie between~0 and~1 for all $0\le j \le \ell-1$ and $\tau_0 \le s \le \tau-1$.
So from now on we assume that, if $j = \ell-1$, then $s > (\tau-1)/\ell^3$.

\smallskip

Recall that $a_j(s)$ and $f_j(s)$ satisfy the recurrences:
\begin{eqnarray*}
a_j(s-1) &=& \frac{\ell - j}{2s} a_{j-1}(s) + \left( 1 - \frac{\ell -j}{2s}\right) a_j(s), \\
f_j(s-1) &=& \frac{\ell-j}{2s} f_{j-1}(s) + \left( 1 - \frac{\ell-j}{2s}\right) f_j(s) + \Big[ f_j(s-1) - f_j(s) + f'_j(s) \Big],
\end{eqnarray*}
for all $j \ge 1$ and all $s$ with $\tau_0 < s \le \tau$.

The term in square brackets is, by Taylor's Theorem, equal to $\frac12 f''_j(w)$ for some $w \in (s-1,s)$.  We will
thus estimate it by bounding the absolute value of the second derivative of $f_j$.

\begin{lemma} \label{lem:f''}
For all $1\le j\le \ell-2$ and $\tau_0 \le s\le \tau-1$,
\begin{equation} \label{eq:second}
|f''_j(s)| \le \frac{140}{s(\tau-1)} \frac{(\ell-1)^{5/2}}{j^{3/2}(\ell-j)^{1/2}}.
\end{equation}
This bound also holds if $j=\ell-1$ and $s > (\tau-1)/\ell^2$.

For $(\tau-1)/\ell^3 \le s \le (\tau-1)/\ell^2$, we have
$$
|f''_{\ell-1}(s)| \le \ell^{3/2}/s(\tau-1).
$$
\end{lemma}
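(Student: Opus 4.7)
The plan is to differentiate $f_j$ twice via the chain rule through the substitution $v(s) = \sqrt{s/(\tau-1)}$. Writing $F_j(v) = \binom{\ell-1}{j}(1-v)^j v^{\ell-j}$ so that $f_j(s) = F_j(v(s))$, and using $v'(s) = v/(2s)$, $v''(s) = -v/(4s^2)$ together with $s = v^2(\tau-1)$, the two terms produced by the chain rule combine cleanly to
\[
f''_j(s) = \frac{1}{4 s (\tau-1)} \Bigl( F''_j(v) - \frac{F'_j(v)}{v} \Bigr).
\]
This already produces the $1/[s(\tau-1)]$ prefactor appearing in the statement automatically; everything else reduces to a uniform bound on the bracket.

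A direct expansion of $F'_j$ and $F''_j$ yields, after collecting terms and factoring,
\[
F''_j(v) - \frac{F'_j(v)}{v} \;=\; \binom{\ell-1}{j}(1-v)^{j-2} v^{\ell-j-2}\, Q(u), \qquad u := 1-v,
\]
where $Q(u) = \ell(\ell-2)u^2 - j(2\ell-3)u + j(j-1)$. A discriminant computation gives $j[4\ell(\ell-j-2)+9j]$, so both roots of $Q$ lie within $O(\sqrt{j(\ell-j)}/\ell^{3/2})$ of $u = j/\ell$; but this is precisely the mode of the binomial pmf $g_j(v) = \binom{\ell-1}{j}(1-v)^j v^{\ell-j-1}$. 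Since $\binom{\ell-1}{j}(1-v)^{j-2}v^{\ell-j-2} = g_j(v)/[(1-v)^2 v]$, this coincidence of the zeros of $Q$ with the peak of $g_j$ is the essential cancellation behind the target bound.

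I would then split $v\in[0,1]$ into a central window $|u-j/\ell|\le C\sqrt{j(\ell-j)}/\ell^{3/2}$ and its complement. In the central window the Stirling bound $g_j(v)\lesssim\sqrt{(\ell-1)/[j(\ell-j)]}$, the quadratic estimate $|Q(u)|\lesssim j(\ell-j)/\ell$ (immediate from the expansion $Q(j/\ell - t) = -j(\ell-j)/\ell + jt + \ell(\ell-2)t^2$), and $(1-v)^2 v \asymp j^2(\ell-j)/\ell^3$ multiply together to give the target order $\ell^{5/2}/(j^{3/2}(\ell-j)^{1/2})$. Outside the central window the Gaussian-type decay of $g_j$ (with standard deviation matching $\sqrt{j(\ell-j)}/\ell^{3/2}$) defeats the polynomial growth of $|Q(u)|/[(1-v)^2 v]$: in the rescaled variable $s' = \ell^3(u-j/\ell)^2/[j(\ell-j)]$, the product takes the shape $s'\,e^{-s'/2}$ times the target bound. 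The main obstacle will be organising these estimates so that the final numerical constant comes out at most $140$.

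The generic argument degenerates when $(1-v)^{j-2}$ or $v^{\ell-j-2}$ has a negative exponent, and these cases must be treated separately. The principal remaining case is $j=\ell-1$ with $(\tau-1)/\ell^3 < s \le (\tau-1)/\ell^2$, equivalently $v\le 1/\ell$. Here I would differentiate $f_{\ell-1}(s)=(\ell-1)(1-v)^{\ell-1}v$ directly: since $v\le 1/\ell$ we have $(1-v)^{\ell-1} = \Theta(1)$, while the factors produced by two differentiations contribute at most $O(1/v)$, and an elementary calculation yields the weaker bound $|f''_{\ell-1}(s)|\le \ell^{3/2}/[s(\tau-1)]$.
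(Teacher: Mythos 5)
Your starting formula is exactly the paper's: the chain-rule identity $f''_j(s)=\frac{1}{4s(\tau-1)}\bigl(F''_j(v)-F'_j(v)/v\bigr)$ together with your quadratic $Q(u)=\ell(\ell-2)u^2-j(2\ell-3)u+j(j-1)$ is precisely the paper's expression $\binom{\ell-1}{j}\frac{1}{4s(\tau-1)}(1-v)^{j-2}v^{\ell-j-2}\bigl[\{(\ell-1)(1-v)-j\}^2-(1-v)^2-vj\bigr]$, and your ``central window plus tail'' plan is the same mechanism as the paper's decomposition $4s(\tau-1)f''_j=k_1k_2k_3$ in a shifted variable $\phi$: peak value of the binomial-type factor, sub-Gaussian/geometric decay away from the peak, quadratic growth of the remaining factor. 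So this is essentially the paper's route, merely centred at $u=j/\ell$ instead of at $v=(\ell-j-2)/(\ell-4)$.

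What you have, however, is a plan with genuine unexecuted steps. (1) The tail cannot be dispatched by a single ``Gaussian decay with the central standard deviation'' statement: a pointwise sub-Gaussian bound with variance of order $j(\ell-j)/\ell^3$ is only valid for moderate deviations (the paper's inequality $\log(1+x)\le x-x^2/4$ needs $|x|<1$, i.e.\ $|\phi|<\min(j-2,\ell-j-2)$); in the far tail only a linear-exponential bound such as $(2/e)^{|\phi|}$ is available, and one must check separately that it still beats the quadratic factor. A crude pointwise Gaussian bound with worst-case variance would be too weak when $j(\ell-j)\ll\ell^2$, so this two-regime split (the paper's $k_2$ estimate) is not optional. (2) The cases $j=1$, and $j=\ell-1$ with $s>(\tau-1)/\ell^2$, where (\ref{eq:second}) is still asserted, are flagged but not argued; each needs its own short computation — for $j=1$ the zero of $Q$ at $u=0$ cancels the negative power $(1-v)^{-1}$, but that has to be said. (3) The lemma carries the explicit constant $140$, and you acknowledge you have not obtained it; as written the statement is not proved. (4) Finally, $f_{\ell-1}(s)=(1-v)^{\ell-1}v$, since $\binom{\ell-1}{\ell-1}=1$; your formula $(\ell-1)(1-v)^{\ell-1}v$ has a spurious factor $\ell-1$, and if carried through the small-$s$ computation it would inflate the bound by a factor $\ell$ and violate the claimed $\ell^{3/2}/[s(\tau-1)]$. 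The correct calculation on $(\tau-1)/\ell^3<s\le(\tau-1)/\ell^2$ gives $4s(\tau-1)|f''_{\ell-1}(s)|\le 2(\ell-1)+2\ell^{3/2}$, which is what you need.
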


\begin{proof}
Using the expression for $f'_j(s)$ in (\ref{fprime}), as well as the
identities $s = v^2(\tau-1)$ and $(\ell-j)(1-v) -jv = \ell(1-v) -j$, we can write
$$
f'_j(s) = \binom{\ell-1}{j} \frac{1}{2(\tau-1)} (1-v)^{j-1} v^{\ell-j-2} (\ell(1-v) - j),
$$
and then we have:
\begin{eqnarray*}
f''_j(s) &=& \binom{\ell-1}{j} \frac{1}{2(\tau-1)} \frac{v}{2s} (1-v)^{j-2} v^{\ell-j-3} \\
&& \mbox{} \times \big[ (\ell(1-v)-j)\left\{ (\ell-j-2)(1-v) -(j-1)v \right\} -\ell v(1-v) \big] \\
&=& \binom{\ell-1}{j} \frac{1}{4s(\tau-1)} (1-v)^{j-2} v^{\ell-j-2} \\
&& \mbox{} \times \left[ \left\{ (\ell-1)(1-v) - j\right\}^2 - (1-v)^2 - vj \right].
\end{eqnarray*}

\smallskip

Let us first verify the result for $j=1$, when we can write
$$
4s(\tau-1) f''_1(s) = (\ell -1) v^{\ell-3} [(\ell-1)(\ell-3) - v\ell(\ell-2)].
$$
The right-hand side is increasing from $v=0$ to $v = (\ell-1)(\ell-3)^2/\ell(\ell-2)^2$, and decreasing thereafter.
It is thus always at least its value at $v=1$, which is $-(\ell-1)(2\ell-3)$, and at most its value at the stationary
point, which is at most
$$
(\ell-1) 1^{\ell-3} (\ell-1)(\ell-3) \left(1 - \frac{\ell-3}{\ell-2}\right)
= \frac{(\ell-1)^2(\ell-3)}{(\ell-2)} \le (\ell-1)^2,
$$
and thus
$$
|f''_1(s)| \le \frac{1}{2s(\tau-1)} (\ell-1)^2,
$$
which is as required.

We now embark on the calculation for $2\le j \le \ell-2$.  We define a parameter $\phi = \phi(v)$ by
$v = (\ell -j - 2 -\phi)/(\ell-4)$, so $1-v = (j-2+\phi)/(\ell-4)$, and $-(j-2) \le \phi \le \ell-j-2$.  The point is that
the ``main term'' $(1-v)^{j-2}v^{\ell-j-2}$ in our expression for the second derivative of $f$ is maximised at $\phi=0$, whereas the
other term $\left[ \left\{ (\ell-1)(1-v) - j\right\}^2 - (1-v)^2 - vj \right]$ is small for small $\phi$.
We write
$$
4s(\tau-1) f''_j(s) = k_1 k_2 k_3,
$$
where
\begin{eqnarray*}
k_1 &=& \binom{\ell-1}{j} \left( \frac{j-2}{\ell-4} \right)^{j-2} \left( \frac{\ell-j-2}{\ell-4} \right)^{\ell-j-2}, \\
k_2 &=& \left( 1 + \frac{\phi}{j-2}\right)^{j-2} \left( 1 - \frac{\phi}{\ell-j-2}\right)^{\ell-j-2}, \\
k_3 &=& \left( (\ell-1) \frac{j-2+\phi}{\ell-4} -j \right)^2 - \left(\frac{j-2+\phi}{\ell-4}\right)^2 - j \frac{\ell-j-2-\phi}{\ell-4}.
\end{eqnarray*}
If $j=2$ or $j=\ell-2$, the terms in $k_1$ and $k_2$ with a power of $j-2$ or $\ell-j-2$ respectively are treated as equal to 1,
and therefore absent from the products.
We shall estimate $k_1$, $k_2$ and $k_3$ separately to start with, and then consider $k_2k_3$.

We have
\begin{eqnarray*}
k_1 &=& \binom{\ell-1}{j}\left( \frac{j-2}{\ell-4} \right)^{j-2} \left( \frac{\ell-j-2}{\ell-4} \right)^{\ell-j-2} \\
&=& \frac{(\ell-1)(\ell-2)(\ell-3)}{j(j-1)(\ell-j-1)} \left(\frac{(\ell-4)!\, e^{\ell-4}}{(\ell-4)^{\ell-4}}\right)
\left(\frac{ (j-2)^{j-2}}{(j-2)!\, e^{j-2}}\right) \\
&& \mbox{} \times \left( \frac{ (\ell-j-2)^{\ell-j-2}}{(\ell-j-2)!\, e^{\ell-j-2}} \right).
\end{eqnarray*}
Again, if $j-2$ or $\ell-j-2$ is zero, the related term is absent (i.e., the ratio is equal to~1).
We now use the inequalities
$$
\sqrt{x+1} \left( \frac{x}{e} \right)^x \le x! \le 3 \sqrt{x} \left( \frac{x}{e} \right)^x,
$$
to obtain that
$$
k_1 \le 3 \frac{(\ell-1)(\ell-2)(\ell-3)}{j(j-1)(\ell-j-1)} \sqrt{\frac{\ell-4}{(j-1)(\ell-j-1)}}.
$$
(Note that this remains valid if $j=2$ or $j=\ell-2$.)

\smallskip

We next consider $k_2$.  We assume for the moment that $j \le \ell/2$ (the other case is symmetric) and distinguish two ranges.
First, we consider the case where $|\phi| < j-2$.  In this case, we use the bound
$\log (1+x) \le x - \frac{x^2}{4}$, valid for all $|x| < 1$, and obtain:
\begin{eqnarray*}
\log k_2 &=& (j-2) \log \left( 1 + \frac{\phi}{j-2}\right) + (\ell-j-2) \log \left( 1 - \frac{\phi}{\ell-j-2}\right) \\
&\le& (j-2) \left( \frac{\phi}{j-2} - \frac{\phi^2}{4(j-2)^2} \right) \\
&& \mbox{} +
(\ell-j-2) \left( \frac{-\phi}{\ell-j-2} - \frac{\phi^2}{4(\ell-j-2)^2} \right)\\
&=& - \frac{\phi^2}{4} \left( \frac{1}{j-2} + \frac{1}{\ell-j-2} \right)\\
&=& - \frac{\phi^2}{4} \frac{\ell-4}{(j-2)(\ell-j-2)}.
\end{eqnarray*}
In the case where $\phi = \alpha (j-2)$ with $\alpha \ge 1$ and $j > 2$, we estimate
\begin{eqnarray*}
\left( 1 + \frac{\phi}{j-2}\right)^{j-2} \left( 1 - \frac{\phi}{\ell-j-2}\right)^{\ell-j-2}
&\le& (1+\alpha)^{\phi/\alpha} e^{-\phi} \\
&=& \left( (1+\alpha)^{1/\alpha} e^{-1} \right)^\phi \\
&\le& (2/e)^\phi.
\end{eqnarray*}
If $j=2$, then $k_2$ is just $(1-\phi/(\ell-4))^{\ell-4}$, which is at most $e^{-\phi} \le (2/e)^\phi$.
There is a final case where $\phi = -(j-2)$, i.e., $v=1$, and we may dispose of this immediately since the second
derivative is zero unless $j=2$.
In summary, $\displaystyle k_2 \le \exp\left( - \frac{\phi^2}{4} \frac{\ell-4}{(j-2)(\ell-j-2)} \right)$
if $|\phi| < \min (j-2, \ell-j-2)$, and $k_2 \le (2/e)^{|\phi|}$ otherwise.

\smallskip

Let us organise $k_3$ as a quadratic in $\phi$:
\begin{eqnarray*}
k_3 &=& \frac{1}{(\ell-4)^2} \Big[ \phi^2 \ell(\ell-2) - \phi (4\ell^2 -7\ell j - 8\ell + 12 j) \\
&&\mbox{} + \big((2\ell-3j-2)^2 - (j-2)^2 -j(\ell-4)(\ell-j-2)\big) \Big].
\end{eqnarray*}
Using the inequality
\begin{equation} \label{quadratic}
|a \phi^2 + b\phi + c| \le 2 a \phi^2 + b^2/4a + |c|,
\end{equation}
valid for all positive $a$, and all real $b,c,\phi$, we have, for all $\ell \ge 8$,
\begin{eqnarray*}
k_3 &\le& \frac{2\ell (\ell-2)}{(\ell-4)^2} \phi^2 + \frac{(4\ell^2 -7\ell j - 8\ell + 12 j)^2}{4\ell (\ell-2) (\ell-4)^2} \\
&& \mbox{} + \frac{ \left|(2\ell-3j-2)^2 -(j-2)^2 -j(\ell-4)(\ell-j-2) \right|}{(\ell-4)^2} \\
&\le& 6\phi^2 + \frac{\left(2(\ell-4)(2\ell-3) - (j-2)(7\ell-12) \right)^2}{4\ell (\ell-2) (\ell-4)^2} \\
&&\mbox{} + \left( \frac{|2\ell-3j-2|}{\ell-4} \right)^2  + \left(\frac{j-2}{\ell-4}\right)^2 + \frac{j(\ell-j-2)}{\ell-4} \\
&\le& 6\phi^2 + 4+4+1 + \frac{j(\ell-j-2)}{\ell-4} \\
&=& 6\phi^2 + 9 + \frac{j(\ell-j-2)}{\ell-4}.
\end{eqnarray*}

Now we combine our bounds to produce a single bound on $k_2k_3$.  The product of $k_2$ with
$\displaystyle 9 + \frac{j(\ell-j-2)}{\ell-4}$ is certainly at most
$\displaystyle 9 + \frac{j(\ell-j-2)}{\ell-4} \le 10 \frac{j(\ell-j-1)}{\ell-4}$,
while the product of $k_2$ with $6\phi^2$ is at most the maximum of
$\displaystyle 6\phi^2 \left(\frac2e\right)^{|\phi|} \le 35 \le 35\frac{j(\ell-j-1)}{\ell-4}$ and
$$
6\phi^2 \exp \left (-\frac{\phi^2}{4} \frac{\ell-4}{(j-2)(\ell-j-2)} \right) \le \frac{24}{e} \frac{(j-2)(\ell-j-2)}{\ell-4}
< 35\frac{j(\ell-j-1)}{\ell-4}.
$$
We can summarise by saying that, provided $\ell \ge 8$, for all values of $\phi$,
$$
k_2k_3 \le 45 \frac{j(\ell-j-1)}{\ell-4}.
$$

Therefore, overall, we have, for $2\le j \le \ell-2$,
\begin{eqnarray*}
4s(\tau-1) |f''_j(s)| &\le& 135 \frac{(\ell-1)(\ell-2)(\ell-3)}{j(j-1)(\ell-j-1)} \sqrt{\frac{\ell-4}{(j-1)(\ell-j-1)}}
\frac{j(\ell-j-1)}{\ell-4} \\
&=& 135 \frac{(\ell-1)(\ell-2)(\ell-3)}{\sqrt{\ell-4}} \frac{1}{(j-1)^{3/2}} \frac{1}{\sqrt{\ell-j-1}} \\
&\le& 560 \frac{(\ell-1)^{5/2}} {j^{3/2}(\ell-j)^{1/2}},
\end{eqnarray*}
provided $\ell \ge 8$.

\smallskip

Consider now the special case $j=\ell-1$, when the bound (\ref{eq:second}) translates to
$\displaystyle 4s (\tau-1) |f''_j(s)| \le 560 (\ell-1)$.  Using (\ref{quadratic}), we have
\begin{eqnarray*}
4s(\tau-1) |f''_{\ell-1}(s)| &=& \frac{(1-v)^{\ell-3}}{v} \left| (\ell-1)^2 v^2 - (\ell-1)v - (1-v)^2 \right| \\
&\le & \frac{(1-v)^{\ell-3}}{v} \left(2(\ell-1)^2 v^2 + \frac54\right) \\
&\le & 2(\ell-1)^2 v (1-v)^{\ell-3} + \frac{2(1-v)^{\ell-3}}{v}.
\end{eqnarray*}

The first term above is maximised at $v=1/(\ell-2)$, so we have
$$
2(\ell-1)^2 v (1-v)^{\ell-3} \le 2 \frac{(\ell-1)^2}{\ell-2} \left( 1- \frac{1}{\ell-2}\right)^{\ell-3} \le 2 (\ell-1).
$$
If $s > (\tau-1)/\ell^2$, then $v > 1/\ell$, and the second term above is at most
$2\ell (1-v)^{\ell-3} \le 3(\ell-1)$, so we do have $4s (\tau-1) |f''_j(s)| \le 5 (\ell-1)$, as desired.

For the range $(\tau-1)/\ell^3 < s \le (\tau-1)/\ell^2$, we have $v > \ell^{-3/2}$.  This gives
$4s(\tau-1) |f''_{\ell-1}(s)| \le 2(\ell-1) + 2 \ell^{3/2}$, and thence $|f''_{\ell-1}(s)| \le \ell^{3/2}/s(\tau-1)$, as claimed.

This completes the proof.
\end{proof}

We are now ready to bound the difference $e_j(s) = a_j(s) - f_j(s)$.
Recall that we have, from comparing the recurrences satisfied by the two systems:
$$
e_j(s-1) = \left( 1 - \frac{\ell-j}{2s} \right) e_j(s) + \frac{\ell-j}{2s} e_{j-1}(s) - \Big[ f_j(s-1) - f_j(s) + f'_j(s) \Big].
$$

For $0\le j \le \ell-1$ and $\tau_0\le s \le \tau-1$, set
$$
C_j = 1 + 140 \sum_{i=1}^j \left( \frac{\ell}{i(\ell-i)} \right)^{3/2}.
$$
We now use induction on $j$ and $\tau-s$ to show that $|e_j(s)| \le C_j \frac{\ell}{\tau-1}$,
for all $0\le j \le \ell-2$ and $\tau_0\le s \le \tau-1$.  (We shall return to the case $j=\ell-1$ afterwards.)

We first check the inequality $\displaystyle |e_0(s)| \le \frac{\ell}{\tau-1}$ for $j=0$.  We have that
$a_0(\tau-1) = 1$, and, for $\tau_0 < s \le \tau-1$,
$$
a_0(s-1) = \left( 1 - \frac{\ell}{2s}\right) a_0(s),
$$
so
$$
a_0(s) = \prod_{w=s+1}^{\tau-1} \left( 1 - \frac{\ell}{2w} \right),
$$
while
$$
f_0(s) = \left( \frac{s}{\tau-1} \right)^{\ell/2} = \prod_{w=s+1}^{\tau-1} \left( 1 - \frac{1}{w} \right)^{\ell/2}.
$$
Thus we have
\begin{eqnarray*}
\frac{f_0(s)}{a_0(s)} = \prod_{w=s+1}^{\tau-1} \frac{ (1-1/w)^{\ell/2}}{(1-\ell/2w)}.
\end{eqnarray*}
Each term in the product is clearly at least~1, so $f_0(s) \ge a_0(s)$ for all $s$.  If $s \le \ell$, then we certainly have
$|e_0(s)| \le f_0(s) \le s/(\tau-1) \le \ell/(\tau-1)$, so we may assume that $s \ge \ell$.  Now we have, for
all $w \ge s \ge 2\ell$,
$$
\frac{(1 - 1/w)^{\ell/2}}{1-\ell/2w} \le \frac{1 - \frac{\ell}{2w} + \frac{\ell^2}{8w^2}}{1- \frac{\ell}{2w}} \le
1+ \frac{1}{1 - 1/2} \frac{\ell^2}{8w^2} = 1 + \frac{\ell^2}{4w^2} \le \exp\left( \frac{\ell^2}{4w^2} \right).
$$

This means that
$$
\frac{f_0(s)}{a_0(s)} \le \exp\left( \sum_{w=s+1}^{\tau-1} \frac{\ell^2}{4 w^2} \right)
\le \exp\left( \frac{\ell^2}{4} \left( \frac{1}{s} - \frac{1}{\tau-1} \right) \right)
= \exp \left( \frac{\ell^2 (\tau-1-s)}{4s(\tau-1)} \right).
$$

Now we write
\begin{eqnarray*}
|e_0(s)| &=& f_0(s)(1-a_0(s)/f_0(s)) \\
&\le& \left( \frac{s}{\tau-1} \right)^{\ell/2} \left( 1 - \exp(-\ell^2(\tau-1-s)/4s(\tau-1) \right)\\
&\le& \left(\frac{s}{\tau-1}\right)^{\ell/2} \frac{\ell^2(\tau-1-s)}{4s(\tau-1)}.
\end{eqnarray*}
This function is maximised at $s=(1-2/\ell)(\tau-1)$, and its value there is equal to
$$
\frac{(1-2/\ell)^{\ell/2 -1} \ell^2 (2/\ell) }{4(\tau-1)} \le \frac{\ell}{2(\tau-1)},
$$
as required for the case $j=0$.

\smallskip

For $j>0$, we have $e_j(\tau-1) = a_j(\tau-1) - f_j(\tau-1) = 0 - 0 = 0$.  Now, for the induction step, suppose that
$0<j\le\ell-2$, $\tau_0 < s\le \tau-1$, and that we have verified our inequality for both $|e_{j-1}(s)|$ and $|e_j(s)|$.
Hence we have
\begin{eqnarray*}
\lefteqn{|e_j(s-1)|}\\
&\le& \left( 1 - \frac{\ell-j}{2s} \right) |e_j(s)| + \frac{\ell-j}{2s} |e_{j-1}(s)| + |f_j(s-1) - f_j(s) + f'_j(s)| \\
&\le& \frac{\ell}{\tau-1} \left[ \left( 1 - \frac{\ell-j}{2s} \right) C_j + \frac{\ell-j}{2s} C_{j-1} \right]
 + |f_j(s-1) - f_j(s) + f'_j(s)|.
\end{eqnarray*}

We now note that $f_j(s-1) - f_j(s) + f'_j(s) = f_j''(w)/2$ for some $w$ in $(s-1,s)$,
and so its absolute value is at most $\displaystyle \frac{70}{s(\tau-1)} \frac{\ell^{5/2}}{j^{3/2}(\ell-j)^{1/2}}$,
by Lemma~\ref{lem:f''},
Therefore
\begin{eqnarray*}
|e_j(s-1)| &\le& \frac{\ell}{\tau-1} \left[ \left( 1 - \frac{\ell-j}{2s} \right) C_j + \frac{\ell-j}{2s} C_{j-1} + \frac{70}{s} \frac{\ell^{3/2}}{j^{3/2}(\ell-j)^{1/2}} \right]\\
&=& \frac{\ell}{\tau-1} \left[ C_j + \frac{\ell-j}{2s} \left( -C_j + C_{j-1} + 140 \frac{\ell^{3/2}}{j^{3/2}(\ell-j)^{3/2}} \right) \right]\\
&=& C_j \frac{\ell}{\tau-1},
\end{eqnarray*}
where the last line is by the definition of the $C_j$'s.

\smallskip

For $j=\ell-1$ and $s-1 > (\tau-1)/\ell^2$, the same calculation still gives us that $|e_{\ell-1}(s-1)| \le C_{\ell-1} \ell/(\tau-1)$, since
the bound~(\ref{eq:second}) is valid for $|f''_{\ell-1}(w)|$ as long as $w > (\tau-1)/\ell^2$.
For values of $s-1$ with $(\tau-1)/\ell^3 < s-1 \le (\tau-1)/\ell^2$, we replace the bound on the second derivative by
$\displaystyle |f''(w)| \le \frac{\ell^{3/2}}{s(\tau-1)}$,
and the same calculation gives
$$
|e_{\ell-1}(s)| \le C_{\ell-2} \frac{\ell}{\tau-1} + \frac{\ell^{3/2}}{\tau-1}.
$$

We now observe that the sum $\displaystyle \sum_{i=1}^{\ell-1} \frac{\ell^{3/2}}{i^{3/2}(\ell-i)^{3/2}}$ is uniformly bounded, being at most
$\displaystyle 2 \sum_{i=1}^{\ell/2} \frac{2^{3/2}}{i^{3/2}} \le 15$.  Therefore we have
$$
|e_j(s)| \le \frac{2200\ell}{\tau-1},
$$
for all $0\le j \le \ell-2$ and $\tau_0\le s\le \tau-1$, and also for $j=\ell-1$ and $(\tau-1)/\ell^2 < s \le \tau-1$.

For $(\tau-1)/\ell^3 < s \le (\tau-1)/\ell^2$, we have
$$
|e_{\ell-1}(s)| \le \frac{2200 \ell}{\tau-1} + \frac{\ell^{3/2}}{\tau-1} \le \frac{800\ell^{3/2}}{\tau-1},
$$
since $\ell \ge 8$.0000

This completes the proof of Lemma~\ref{lem:ejs}, and hence in turn the proofs of Lemma~\ref{PhivR} and Theorem~\ref{thm:main}.

\section{Concentration for $U_t(\ell)$} \label{Usimple}

In this section, we give a very brief sketch of the proof of Theorem~\ref{thm:main-U}.  The proof proceeds on very similar
lines to that of Theorem~\ref{thm:main}.

Recall that $U_t(\ell)$ is the number of vertices of degree {\em at least} $\ell$ at time $t$.  It is easy to show that the
expected value of $U_t(\ell)$ is close to $u_t(\ell) = 2t/\ell(\ell+1)$, uniformly over $t$ and $\ell$.

The difference $F_t(\ell) = U_t(\ell) - u_t(\ell)$ satisfies the matrix equation
$$
F_t = W_{t-1} F_{t-1} + \Delta M_t.
$$
Here $\Delta M_t$ is a vector of martingale differences, and
\begin{eqnarray*}
W_u = \begin{pmatrix}
1 - \frac{1}{2u} & 0 & \cdots & 0\\
\frac{2}{2u} & 1-\frac{2}{2u} &  \cdots & 0\\
\vdots & \ddots & \ddots & 0\\
0 & 0 & \frac{\ell_0-1}{2u} & 1-\frac{\ell_0-1}{2u}
\end{pmatrix}.
\end{eqnarray*}
The indexing of vectors and matrices runs from $\ell =2$ to $\ell=\ell_0$; we do not need to track the component
$\ell=1$ since $U_t(1) =t$ for all $t$.  As for $D_t$, the plan is to apply Theorem~\ref{thm.key} to the martingale
$$
\widetilde{M}^\tau_t = \sum_{s=\tau_0+1}^t C_{s-1} \Delta M_s,
$$
where $C_s = \prod_{u=s+1}^{\tau-1} W_u$, for fixed $\tau > \tau_0$.

A transition of $U_s$ involves an increase of~1 in at most one component $U(\ell)$; in other words $U_{s+1}$ is
obtained from $U_s$ by adding some unit vector $e_\ell$, or leaving the vector unchanged.

For $\ell = 2,\dots, \ell_0$, and $\tau_0 \le t < \tau$, we set
$$
\Phi^\ell_t(Z) = \Phi^{g^\ell}_t(Z) = \sum_{s=\tau_0}^{t-1} \sum_{k=2}^{\ell_0} P_s(U_s,U_s+e_k) C_t(\ell,k)^2.
$$

We set, for each $1\le \ell \le \ell_0$,
$$
S^\ell = 225 \frac{\tau-1}{\ell^2} + 10^{18} \psi^2 \log^{13} (\psi\tau),
$$
and
$$
T_S^\ell = \inf \{ t \ge \tau_0 : \Phi^\ell_t(Z) > S^\ell \}.
$$
An application of Theorem~\ref{thm.key} gives
$$
\P \left( \left( \sup_{\tau_0\le t \le \tau} |\widetilde{M}_t^\tau(\ell)| > 3 \sqrt{\log (\psi\tau) S^\ell} \right)
\wedge (T_S^\ell \ge \tau) \right) \le \frac{2}{\psi^2\tau^2}
$$
for each $\tau \ge \tau_0$ and $2 \le \ell \le \ell_0$.

Now let $\widehat T$ be the infimum of the times $s \ge \tau_0$ such that either there is a vertex of degree at least $\psi \sqrt{s-1}$,
or, for some $k \le \ell_0$,
$$
\left|U_s(k) - \frac{2s}{k(k+1)}\right| >
45 \frac{\sqrt{s \log (\psi s)}}{k} + 4 \times 10^9 \psi \log^7(\psi s).
$$

The next step is to prove the following result, analagous to Lemma~\ref{PhivR}.

\begin{lemma} \label{PhivS}
For all $t\le \widehat{T}$, and all $\ell = 2,\dots, \ell_0$,
$\Phi^\ell_{t-1}(Z) \le S^\ell$.
\end{lemma}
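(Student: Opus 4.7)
The argument will mirror the proof of Lemma~\ref{PhivR}, with the simpler transition structure of the $U$-process actually offering some relief. I would begin by writing $c_j(s) = C_s(\ell,\ell-j)$ for $0\le j\le \ell-2$ and observing that, since the one-step increment $U_{s+1}-U_s$ is either $0$ or a single basis vector $e_k$, the expression $\Phi^\ell_{t-1}(Z) = \sum_{s,k} P_s(U_s,U_s+e_k)\,c_{\ell-k}(s)^2$ contains no ``differences'' of matrix entries (in contrast with the more awkward sum~(\ref{bound}) that arose in the $D$-analysis). The recursion $C_{s-1} = C_s W_s$ then gives
\[
c_j(s-1) = \Big(1 - \tfrac{\ell-j-1}{2s}\Big) c_j(s) + \tfrac{\ell-j}{2s}\, c_{j-1}(s),
\]
with the boundary condition $c_j(\tau-1) = \delta_{j,0}$.

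Next, I would introduce the continuous-time analogue
\[
F_j(s) = \binom{\ell-1}{j}(1-v)^j v^{\ell-j-1}, \qquad v = v_s = \sqrt{s/(\tau-1)},
\]
which, using the Pascal identity $\binom{\ell-1}{j-1}(\ell-j) = j\binom{\ell-1}{j}$, satisfies the differential recursion $F_j'(s) = \tfrac{\ell-j-1}{2s} F_j(s) - \tfrac{\ell-j}{2s} F_{j-1}(s)$ and the correct boundary condition. Note that $F_j$ differs from $f_j$ only by the replacement $v^{\ell-j}\mapsto v^{\ell-j-1}$, reflecting that $\E U_t(\ell) \sim 2t/\ell^2$ rather than $4t/\ell^3$. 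Bounds on $|F_j''(s)|$ analogous to those of Lemma~\ref{lem:f''} should then translate, via Taylor's theorem and induction on $j$ and $\tau-s$ (exactly as in Section~\ref{Slemma}), into a bound on $|c_j(s)-F_j(s)|$ of order $\ell/(\tau-1)$ on the bulk of the range, with the edge case $j=\ell-1$ requiring a case split at $s\approx (\tau-1)/\ell^3$.

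Given this approximation, the estimation of $\Phi^\ell_{t-1}(Z)$ splits into cases on $\ell$. For $\ell=O(1)$, the trivial bound $\Phi^\ell_{t-1}(Z)\le \tau-1$ is absorbed into the main $225(\tau-1)/\ell^2$ term; for $\ell > 2\psi\sqrt{\tau-1}$, Corollary~\ref{cor:2times} shows that the contributing $c_{\ell-k}(s)$ decay super-polynomially, yielding a negligible total. In the main range, I would replace $c_{\ell-k}(s)$ by $F_{\ell-k}(s)$ at the cost of the above error bound, and use the estimate
\[
P_s(U_s,U_s+e_k) = \frac{(k-1)(U_s(k-1)-U_s(k))}{2(s-1)} \le \frac{2}{k^2} + O\!\Big(\sqrt{\tfrac{\log(\psi s)}{s}}\Big) + O\!\Big(\tfrac{k\,\psi\,\log^7(\psi s)}{s}\Big),
\]
valid on $\{t\le\widehat{T}\}$ by the definition of $\widehat{T}$. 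The resulting sums $\sum_{s,j} k^{-2} F_j(s)^2$ and $\sum_{s,j} (k/s)\,F_j(s)^2$ are then controlled by the Beta-integral technique of Lemma~\ref{lem:beta}: the $1/k^2$ part yields the principal $\sim(\tau-1)/\ell^2$ contribution, while the $k\,\psi\,\log^7/s$ part, after squaring inside the $P_s$-weighting, produces the additive $\sim\psi^2\log^{13}(\psi\tau)$ error.

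The main obstacle is the quantitative analogue of Lemma~\ref{lem:ejs}. The algebraic structure of $F_j''$ differs mildly from that of $f_j''$, and obtaining the correct dependence on $\ell$, $j$, $\ell-j$ and $s$ --- especially near the boundary $j=\ell-1$, where one must again truncate at $s\approx (\tau-1)/\ell^3$ --- will require a calculation parallel to Lemma~\ref{lem:f''} but adapted to the new exponent. A secondary concern is bookkeeping: because the $U$-deviation bound from $\widehat{T}$ carries a factor of $\psi\log^7(\psi s)$ rather than $\psi^2\log(\psi s)$, this is effectively squared inside $\Phi$, and constants must be tracked carefully to fit within the stated $10^{18}\psi^2\log^{13}(\psi\tau)$ additive term.
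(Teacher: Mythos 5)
Your skeleton matches the paper's proof of Lemma~\ref{PhivS} (same approximating function, since your $F_j$ is exactly the paper's $g_j$; same case split in $\ell$; same Beta-integral machinery), but two of the steps as you describe them would not deliver the stated bound $S^\ell$. The more serious one is your treatment of the transition probabilities. Writing $P_s(U_s,U_s+e_k)=\tfrac{k-1}{2(s-1)}\big(U_s(k-1)-U_s(k)\big)$ and bounding the two deviations separately gives, as in your display, a middle error term of size $O\big(\sqrt{\log(\psi s)/s}\big)$ with \emph{no decay in $k$}: the factor $1/k$ in the bound $|V_s(k)|\le 45\sqrt{s\log(\psi s)}/k+\dots$ is exactly cancelled by the prefactor $(k-1)/2s$. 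Its contribution to $\Phi^\ell_{\tau-1}(Z)$ is then of order $\sqrt{\log(\psi\tau)}\sum_k S(1/2;k)\asymp \sqrt{(\tau-1)\log(\psi\tau)}/\sqrt{\ell}$, which for intermediate $\ell$ (say $\ell\approx\tau^{2/5}$ and $\tau$ large) exceeds both $225(\tau-1)/\ell^2$ and $10^{18}\psi^2\log^{13}(\psi\tau)$, so the lemma as stated cannot be reached this way. The paper avoids this by an Abel summation in $k$: using $V_s(1)=0$, it rewrites the inner sum as $\sum_k\tfrac{4}{k(k+1)}g_{\ell-k}(s)^2+\tfrac1s\sum_k V_s(k)\big(kg_{\ell-k-1}(s)^2-(k-1)g_{\ell-k}(s)^2\big)$, so the full $1/k$-decaying bound on $|V_s(k)|$ is retained; a monotonicity/telescoping argument bounds $\sum_k\big|kg_{\ell-k-1}(s)^2-(k-1)g_{\ell-k}(s)^2\big|\le 2\ell v\,g^{\max}(s)^2$, which is what introduces the extra quantity $Q_3$ and keeps the deviation contribution at the scale $\sqrt{\tau-1}\,\log^{3/2}(\psi\tau)/\ell$.

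The second gap is the asserted error bound $|c_j-F_j|=O(\ell/(\tau-1))$ in the bulk. Since $c_j^{(\ell)}(s)=\tfrac{\ell-1}{\ell-j-1}a_j^{(\ell-1)}(s)$ exactly (and $F_j=\tfrac{\ell-1}{\ell-j-1}f_j^{(\ell-1)}$), the error inherits the amplification factor $\tfrac{\ell-1}{\ell-j-1}$, of order $\ell$ when $k=\ell-j$ is small; redoing a Lemma~\ref{lem:f''}-type induction for $F_j$ directly hits the same blow-up, and a bound of order $\ell^2/(\tau-1)$ is useless here, since its square summed over $s$ is $\sim\ell^4/(\tau-1)$, far above $S^\ell$ when $\ell\asymp\sqrt{\tau-1}$. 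The paper does not redo the second-derivative analysis at all: it reuses Lemma~\ref{lem:ejs} via the identity above, and in the dangerous regime $\ell-j-1<\tfrac{\ell-1}{400\log^3\tau}\sqrt{s/(\tau-1)}$ it instead shows $c_j(s)\le\tau^{-3/2}$ directly from Theorem~\ref{thm:early-degrees} with $\omega=2\log\tau$. This is also why the ``large $\ell$'' case is cut at $400\psi\log^3\tau\,\sqrt{\tau-1}$ and handled with Theorem~\ref{thm:early-degrees} rather than Corollary~\ref{cor:2times}: the corollary requires a large initial degree and only controls growth by a factor~2, which is not strong enough to beat the prefactor $\big(\tfrac{\ell-1}{k-1}\big)^2$. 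The resulting dichotomy (Lemma~\ref{lem:ejs2}) gives $\max(|b_j(s)|,c_j(s))\le 10^6\ell\log^3\tau/\sqrt{s(\tau-1)}$, and it is the square of this bound, summed over $s$ with $\ell\le 500\psi\log^3\tau\,\sqrt{\tau-1}$, that actually produces the $\psi^2\log^{13}(\psi\tau)$ term --- not a squaring of the $\psi\log^7$ deviation inside the $P_s$-weighting as you surmise.
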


The result implies that $T_S^\ell \ge \widehat{T}$ for all $\ell=2, \dots, \ell_0$.

\smallskip

Now, for each $\tau \ge \tau_0$ and each $\ell \ge 2$, set
$$
\delta_\tau(\ell) = 45 \frac{\sqrt{\tau \log (\psi\tau)}}{\ell} + 3\times 10^9 \psi \log^7 (\psi\tau) .
$$
We have
$$
\P \left( \left(\sup_{\tau_0 \le t \le \tau} |\widetilde{M}^\tau_t(\ell)| > \delta_\tau(\ell) \right) \wedge
(T_S^\ell \ge \tau) \right) \le \frac{2}{\psi^2\tau^2}.
$$
In particular, together with Lemma~\ref{PhivS} and the fact that $\widetilde{M}^\tau_\tau = F_\tau$ for each $\tau \ge \tau_0$, this implies that
$$
\P \left( \left(|F_{\tau}(\ell)| > \delta_\tau(\ell)\right) \wedge (\widehat{T} \ge \tau) \right)
\le \frac{2}{\psi^2 \tau^2}.
$$
We next use this inequality to show that,
with probability at least $3/\psi$, for all $\tau \ge \tau_0$ and $\ell =2, \dots, \ell_0$, either $|F_\tau(\ell)| \le \delta_\tau(\ell)$
or $\tau \ge \widehat{T}$.  Similarly to the proof of Theorem~\ref{thm:main}, this leads to the conclusion that
$\P( \widehat T < \infty ) \le 4/\psi$, which is the desired result.

\section{More complex preferential attachment models}

In this section, we discuss some of the issues we confront when extending this proof to other models of preferential attachment.

A first extension would cover the model which again generates a random tree, where now an arriving vertex chooses an existing vertex $v$ as a neighbour with
probability proportional to $X(v) + \beta$, where $X(v)$ is the degree of vertex~$v$, and $\beta$ is a fixed constant.  For such a model, the expected
degree of a vertex at time $t$ grows as $C t^{1/(2+\beta)}$, and the expected number of vertices of degree $\ell$ at time $t$ behaves as
$C t / \ell^{3+\beta}$.  When attempting to follow the proof in this paper to establish concentration results, the main difficulty is in finding a
suitable analogue of Lemma~\ref{lem:ejs}, giving bounds on the error function playing the role of $e_j(s)$.

Another well-studied variant is to have each arriving vertex select some fixed number $m$ of neighbours (with replacement), instead of just one.  The main
difficulty introduced by this variation is that we have to account for the possibility that some existing vertex has its degree increased by more than one at
each step, and that the recurrence relations do not have such clean forms.

In the full Cooper-Frieze model (see~\cite{cf03}, \cite{c05}), the number of new edges added at each step is a random variable.  Indeed, with some probability,
no new vertex is added, and some edges are added between existing vertices, chosen either uniformly or via preferential attachment.  This means that the
numbers of vertices and edges present at time~$t$ are no longer determined, causing further complications in the application of our method.

We do believe that all of these problems can be overcome, and that our method can be used to analyse general Cooper-Frieze models.  We also hope that the
method will find further applications in the analysis of other random processes.

%

\newcommand\AAP{\emph{Adv. Appl. Probab.} }
\newcommand\JAP{\emph{J. Appl. Probab.} }
\newcommand\JAMS{\emph{J. \AMS} }
\newcommand\MAMS{\emph{Memoirs \AMS} }
\newcommand\PAMS{\emph{Proc. \AMS} }
\newcommand\TAMS{\emph{Trans. \AMS} }
\newcommand\AnnMS{\emph{Ann. Math. Statist.} }
\newcommand\AnnPr{\emph{Ann. Probab.} }
\newcommand\CPC{\emph{Combin. Probab. Comput.} }
\newcommand\JMAA{\emph{J. Math. Anal. Appl.} }
\newcommand\RSA{\emph{Random Struct. Algor.} }
\newcommand\SPA{\emph{Stoch. Proc. Appl.} }
\newcommand\DMTCS{\jour{Discr. Math. Theor. Comput. Sci.} }

\newcommand\AMS{Amer. Math. Soc.}
\newcommand\Springer{Springer}
\newcommand\Wiley{Wiley}

\newcommand\vol{\textbf}
\newcommand\jour{\emph}
\newcommand\book{\emph}
\newcommand\inbook{\emph}
\def\no#1#2,{\unskip#2, no. #1,} 

\newcommand\webcite[1]{\hfil\penalty0\texttt{\def~{\~{}}#1}\hfill\hfill}
\newcommand\webcitesvante{\webcite{http://www.math.uu.se/\~{}svante/papers/}}
\newcommand\arxiv[1]{\webcite{http://arxiv.org/#1}}

\end{document}